\newtheorem*{Main Theorem}{Main Theorem}
\newtheorem*{Main Proposition}{Main Proposition}
\newtheorem{theorem}{Theorem}[section]
\newtheorem{lemma}[theorem]{Lemma}
\newtheorem{proposition}[theorem]{Proposition}
\newtheorem{remark}[theorem]{Remark}
\numberwithin{equation}{section}
\begin{document}

\title {Radiation fields for semilinear Dirac equations with spinor null forms}

\author{Jin Jia}
\address{School of Mathematics, Hunan University, Changsha, China}
\email{jiajin2023@hnu.edu.cn}

\author{Jiong-Yue LI}
\address{Department of Mathematics, Sun Yat-sen University, Guangzhou, China}
\email{lijiongyue@mail.sysu.edu.cn}


\begin{abstract}
	In this paper, we investigate the scattering theory of half-spin waves through the use of radiation fields. We define the radiation fields for semilinear Dirac equations with spinor null forms and establish a nonlinear isomorphism between the weighted energy space of initial data and the weighted energy space of radiation fields. The proof relies on a detailed examination of the linear Dirac radiation fields and the use of a suitable functional framework.
\end{abstract}

\maketitle

\section{Introduction}
The Dirac equation, formulated by British physicist Paul Dirac in 1928, is a fundamental equation in quantum mechanics that describes the behavior of fermions, such as electrons, which are particles with half spin \cite{Dirac P A M}. The equation is significant because it successfully merges quantum mechanics with special relativity and predicts the existence of antimatter. The Dirac equation for a free particle is given by:
\[
(i \gamma^\mu \partial_\mu - m) \psi = 0,
\]
where $m$ is the mass of the particle, $\partial_\mu$ with $\mu=0,1,2,3$ are partial derivatives with respect to spacetime coordinates, and Einstein summation convention is used.
Here $\psi: \mathbb{R}^{1+3}\rightarrow \mathbb{C}^{4}$ is a four-component spinor known as the Dirac spinor and is the wave function of the particle.
The Dirac equation is written using four gamma matrices, $\gamma^0, \gamma^1, \gamma^2, \gamma^3$, which satisfy specific anticommutation relations:
\[ 
      \{\gamma^\mu, \gamma^\nu\} := \gamma^\mu \gamma^\nu + \gamma^\nu \gamma^\mu = -2\eta^{\mu\nu}I, 
\] 
where $\eta^{\mu\nu}$ is the metric tensor of spacetime with signature typically chosen as $(-,+,+,+)$ which will be used to raise or lower indices, and $I$ is the $4\times4$ identity matrix.

The gamma matrices help to describe intrinsic spin and various symmetries in particle physics. There are multiple representations of the gamma matrices. The most commonly used are the Dirac representation (or standard representation) and the Weyl representation (or chiral representation). In the Weyl representation, the gamma matrices are typically written as: 
\begin{align*}
	\gamma^{0}=\begin{pmatrix}
		0& I  \\  I&0
	\end{pmatrix},\quad
	\gamma^{i}=\begin{pmatrix}
		0&\sigma^{i}\\ -\sigma^{i}&0
	\end{pmatrix}, \quad i=1,2,3,
\end{align*}
where $I$ is $2\times2$ identity matrix and $\sigma^{i}$ are Pauli matrices. The Pauli matrices are a set of three $2\times2$ complex matrices used in quantum mechanics:
\begin{align*}
	\sigma^{1}=\begin{pmatrix}
		0&1\\ 1&0
	\end{pmatrix}\quad
	\sigma^{2}=\begin{pmatrix}
		0&-i\\ i&0
	\end{pmatrix}\quad
	\sigma^{3}=\begin{pmatrix}
		1&0 \\ 0&-1
	\end{pmatrix},
\end{align*}

Shortly after the development of the linear Dirac equation, physicists began investigating potential nonlinear generalizations to explain particle interactions and other phenomena. In 1953, Werner Heisenberg \cite{heisenberg1984quantum} proposed a nonlinear spinor theory as an attempt to describe elementary particles and their interactions. His approach was one of the early significant efforts to introduce nonlinearity into the Dirac framework. In 1958, Walter Thirring \cite{Thirring Walter E} introduced a model that included a nonlinear self-interaction term for fermions. The Thirring model was originally formulated as a model in $(1 + 1)$ space-time dimensions and was characterized by the Lagrangian density
\begin{align*}
     \mathcal{L}=\bar{\psi}\big(i\sigma^{\mu}\partial_{\mu}-m\big)\psi-\frac{a}{2}\big(\bar{\psi}\sigma^{\mu}\psi\big)\big(\bar{\psi}\sigma_{\mu}\psi\big),
\end{align*}
where $\psi:\mathbb{R}^{1+1}\rightarrow\mathbb{C}^{2}$ is the spinor field, $\bar{\psi}=\psi^{*}\gamma^{0}$ is the Dirac adjoint spinor, $\psi^{*}$ is the conjugate transpose of $\psi$, $a$ is the coupling constant, 
$m$ is the mass and $\mu=0,1$ is the index. In 1970, a simplified nonlinear spinor model known as the Solar model  in $(3+1)$ space-time dimensions was introduced \cite{Soler M}, featuring a specific Lagrangian density
\begin{align*}
   \mathcal{L}=\bar{\psi}\big(i\gamma^{\mu}\partial_{\mu}-m\big)\psi+\frac{a}{2}\big(\bar{\psi}\psi\big)^{2},
\end{align*}
where $\psi:\mathbb{R}^{1+3}\rightarrow \mathbb{C}^{4}$ and $\gamma^{\mu}$ denotes the Dirac gamma matrices $\gamma^{\mu}$ with $\mu=0,1,2,3$. The mathematical characteristics of nonlinear Dirac equations, including the existence and uniqueness of solutions, stability, and long-term behavior, became areas of significant focus. Researchers devised methods to examine these properties in different contexts. 

In 1980s, a crucial breakthrough was achieved by Klainerman \cite{MR837683} and Christodoulou \cite{Christodoulou D},  through the introduction of the celebrated null conditions.  Specifically, for a nonlinear wave equation, if the quadratic part of the nonlinearity is a null form, the existence of small-data global solution was proved, which led to the proof of the global nonlinear stability of the Minkowski spacetime \cite{MR1316662}.

Nonlinear Dirac equations, including the Thirring and Soler models, exhibit invariance under Lorentz transformations. It is believed that these equations inherently possess special structures. In 2007,  D’Ancona, Foschi, and Selberg uncovered a null structure in the Dirac equation, which cannot be seen directly, but appeared in the Dirac-Klein-Gordon system after a duality argument \cite{DAnconaFoschiSelberg}. In 2014, Katayama and Kubo \cite{MR3362023} introduced a null condition related to the Dirac equation $D\phi=N(\phi,\phi)$ in \cite{MR3362023}:
\begin{align*}
     N\big(P(\omega)\phi,P(\omega)\phi\big)=0 \quad \text{for all}\ \omega\in\mathbb{S}^{2}, \ \phi\in\mathbb{C}^{4},
\end{align*}
where $N(\cdot,\cdot):\mathbb{C}^{4}\times\mathbb{C}^{4}\rightarrow \mathbb{C}^{4}$ is a bilinear form and
\begin{align*}
P(\omega):=\frac{1}{2}(I+\gamma^{0}\gamma^{i}\omega_{i}),
\end{align*}
is a projection operator on the spinor bundle. Their result gave an affirmative answer to the Tzvetkov's conjecture as proposed in \cite{MR1637692}. Tzvetkov conjectured that the nonlinear Dirac equation has a small-data global solution, provided that the nonlinearity takes such special forms as 
\begin{align*}
    N_{1}(\phi,\phi)e=\langle\gamma^{0}\phi, \phi\rangle e \quad\text{and}\quad  N_{2}(\phi, \phi)e=\langle\gamma^{0}\gamma^{5}\phi, \phi\rangle e,
\end{align*}
where $e\in\mathbb{C}^{4}$ is an arbitrary constant vector and $\langle\cdot, \cdot\rangle$ is the inner product on $\mathbb{C}^{4}$. The $4\times4$ matrix $\gamma^{5}:=-i\gamma^{0}\gamma^{1}\gamma^{2}\gamma^{3}$. It is important to note that $N_{1}$
also appears in the Soler model. Katayama-Kubo's null form is a slightly more general version of the above formulas $N_{1}$ and $N_{2}$. In 2021, Li and Zang proposed an alternative definition of the spinor null form using the Newman-Penrose formalism \cite{MR4184660}. Their definition reveals that the nonlinear terms of the Thirring model and the Soler model indeed satisfy the spinor null structure. By combining the peeling decay property of the solution with energy estimates, a small-data global solution can be found in Klainerman's weighted Sobolev space. For the cubic Dirac equation on $\mathbb{R}^{1+3}$, Bournaveas and Candy \cite{BournaveasCandy} used Fourier analysis to study global well poseness in critical energy space, see \cite{BejenaruHerr,GavrusOh} for other space time dimesions.

The aim of this paper is study the scattering theory of the nonlinear Dirac equations with spinor null forms by analyzing the radiation field of the spinor waves. Combing the vector field method and the energy method, we will prove the existence of the nonlinear radiation field and establish a local isomorphism between a weighted energy space of the initial data and a weighted energy space of the radiation field.
                                                                                                                                                                                                                                                                                                                                                                 
\subsection{Motivation and previous works}
Scattering theory is an old discipline. In 1910, Rutherford conducted the landmark hydrogen atom scattering experiment and provided significant evidence for the nuclear model of the atom \cite{Rutherford1911TheSO}. 
Peter D. Lax and Ralph S. Phillips made significant contributions to the rigorous mathematical formulation of scattering theory for hyperbolic differential equations \cite{MR0217440}. Their approach is characterized by the use of functional analysis and semigroup theory to study the scattering process and is known as the Lax-Phillips scattering theorty. In 1960s, Friedlander introduced the concept of the radiation field . The radiation field captures the behavior of the wave at large distances and can be defined in terms of the asymptotic expansion of the solution along the null direction \cite{MR344687,MR583989,MR1846782}.

In the scattering theory, once we formulate a suitable notion of scattering data, we should ask the following questions \cite{Reed} : 
\begin{itemize}
     \item[(i)] {\it Existence of the scattering states:} For any given scattering data, does there exist a global solution to the equation that corresponds to this scattering data?  
     \item[(ii)]{\it Uniqueness of the scattering states:} If two global solutions correspond to the same scattering data, must they be identical?
     \item[(iii)]{\it Asymptotic completeness:} Do the solutions determined by the scattering data encompass all the global solutions to the equation?\end{itemize}
The literature on scattering theory for linear wave equations is relatively extensive. For wave-type equations and the Teukolsky equation on the exterior of the Schwarzschild black hole, see \cite{BaskinWang, Masaood, MR1317184, MR2047861}. For the wave equation on the exterior of the subextremal Kerr black hole, refer to \cite{DafermosRodnianskiShlapentokh}. For the wave equation on the extremal Reissner–Nordström black hole, see \cite{AngelopoulosAretakisGajic}. Additionally, for the wave equation on the Oppenheimer–Snyder background, refer to \cite{Alford}, and for the massless Dirac–Coulomb system, see \cite{BaskinBoothGell}.
Nonlinear scattering theory is less developed compared to the linear theory.  In the context of the Maxwell–Klein–Gordon system, \cite{CandyKauffmanLinblad, He L, ChenLinblad} gave affirmative answer to \textit{asymptotic completeness} problem. In the small data regime, we refer to \cite{Yu D1} for wave equations satisfying a class of weak null conditions, and to \cite{Yu D2} for compressible Euler equations.  On the \textit{uniqueness of scattering states}, see \cite{Zha D} for elastic wave equations, \cite{MR4223342} for nonlinear Alfvén waves, and \cite{JiaYu} for perturbations of Minkowski space.

The aforementioned works on \textit{asymptotic completeness} are based on the approximation method developed by Lindblad and Schlue \cite{2017arXiv171100822L}, which results in a loss of derivatives during the limiting process. Our work is inspired by Baskin and Sá Barreto \cite{MR2169870}, who developed a functional framework by treating the nonlinear term as a source term to study the radiation field for the critical semilinear wave equation on $\mathbb{R}^{1+3}$. This framework provides a possibility to view the nonlinear scattering map as a nonlinear operator, enabling the application of the inverse function theorem in an appropriate Banach space for radiation fields. The choice of such a Banach space must be compatible with both nonlinear estimates and the radiation field map. Our main innovation lies in the construction of these spaces. In this paper, we construct weighted Sobolev spaces using appropriate vector fields and establish a local isomorphism theorem between initial data and scattering data, thus avoiding the problem of derivative loss. After finishing this work, we learned that Li \cite{Li M} has applied similar methods for nonlinear Alfvén waves.

To motivate the concept of the radiation field for the Dirac equation, we will provide a brief overview of scattering theory for free wave equations. For wave type equations, the far field patterns can be effected represented by means of radiation field, which was raised by Friedlander in a series of papers\cite{MR583989,MR344687,MR1846782}. To discuss the main idea, let's focus on the following Cauchy problem
of wave equation with compactly supported source term and initial data in $\mathbb{R}^{1+3}$,
\begin{equation}\label{dirac: wave equation}
\begin{cases}
\Box u(t,x)=G(t,x),\\
(u,\partial_{t}u)|_{t=0}=(f(x),g(x)),
\end{cases}
\end{equation}
where $u(t,x):\mathbb{R}^{1+3}\rightarrow \mathbb{R}$ is a function, $\Box=-\partial_{t}^{2}+\sum_{i}^{3}\partial_{i}^{2}$ is the wave operator, $(f, g)\in C_{0}^{\infty}(\mathbb{R}^{3})\times C_{0}^{\infty}(\mathbb{R}^{3})$ is the initial data, and $G(t,x)\in C_{0}^{\infty}(\mathbb{R}^{1+3})$ denotes the source term. According to Friedlander \cite{MR583989}, the linear wave $u(t,x)$ have the following properties:

\medskip

In the region $\{(t,x):t\geq 0, |x|\geq 1\}$. The solution $u(t,x)$ has the following expansion in terms of $\frac{1}{|x|}$.
\begin{align*}
u(t,x)=\sum_{k=1}^{\infty}\frac{u_{k}(t-|x|,\frac{x}{|x|})}{|x|^{k}},
\end{align*}
where $u_{k}(s,\omega)\in C_{0}^{\infty}(\mathbb{R}\times\mathbb{S}^{2})$ are smooth functions and the leading term $u_{1}(s,\omega)$ dominates the forward far field pattern of the solution $u(t,x)$. {\it The future radiation field associated with Eq. \eqref{dirac: wave equation} is defined to be the following limit  }
\begin{align*}
\mathcal{F}_{wave}^{+}(f,g,G)(s,\omega):=\lim_{r\rightarrow +\infty} r\partial_{t}u(r+s,r\omega)=\partial_{s}u_{1}(s,\omega).
\end{align*}
which is a smooth function in $C_{0}^{\infty}(\mathbb{R}\times\mathbb{S}^{2})$.

\medskip

In the region $\{(t,x):t\leq 0, |x|\geq 1\}$,  the solution $u(t,x)$ has the following expansion in terms of $\frac{1}{|x|}$
\begin{align*}
u(t,x)=\sum_{k=1}^{\infty}\frac{v_{k}(t+|x|,\frac{x}{|x|})}{|x|^{k}},
\end{align*}
where $v_{k}(s,\omega)\in C_{0}^{\infty}(\mathbb{R}\times\mathbb{S}^{2})$ are smooth functions and the leading term $v_{1}(s,\omega)$ dominates the backward far field pattern of the solution $u(t,x)$. {\it The past radiation field associated with Eq. \eqref{dirac: wave equation} is defined to be the following limit  }
\begin{align*}
\mathcal{F}^{-}_{wave}(f,g,G)(s,\omega):=\lim_{r\rightarrow \infty} r\partial_{t}u(s-r,r\omega)=\partial_{s}v_{1}(s,\omega),
\end{align*}
which is also a smooth function in $C_{0}^{\infty}(\mathbb{R}\times\mathbb{S}^{2})$.
Moreover, if the source term in Eq. \eqref{dirac: wave equation} vanishes ($G\equiv0$), 
\begin{align*}
\frac{1}{2}\int_{\mathbb{R}^{3}}|\nabla f(x)|^{2}+|g(x)|^{2}dx=\int_{\mathbb{R}\times\mathbb{S}^{2}} |\partial_{s}u_{1}(s,\omega)|^{2} d\omega ds,\\
\frac{1}{2}\int_{\mathbb{R}^{3}}|\nabla f(x)|^{2}+|g(x)|^{2}dx=\int_{\mathbb{R}\times\mathbb{S}^{2}} |\partial_{s}v_{1}(s,\omega)|^{2} d\omega ds,
\end{align*}
where $d\omega$ denotes the standard sphere measure on $\mathbb{S}^{2}$. 
Furthermore, the forward radiation field map:
\begin{align*}
\mathcal{F}^{+}_{wave}: \dot{H}^{1}(\mathbb{R}^{3})\times &L^{2}(\mathbb{R}^{3})\rightarrow L^{2}(\mathbb{R}\times\mathbb{S}^{2}), \\
&(f,g)\mapsto \mathcal{F}^{+}_{wave}(f, g,0)
\end{align*}
is an isomorphism. Similarly, {\it the past radiation field map:}
\begin{align*}
\mathcal{F}^{-}_{wave}: \dot{H}^{1}(\mathbb{R}^{3})\times &L^{2}(\mathbb{R}^{3})\rightarrow L^{2}(\mathbb{R}\times\mathbb{S}^{2}), \\
&(f,g)\mapsto \mathcal{F}^{-}_{wave}(f, g,0)
\end{align*}
is also an isomorphism. Then the scattering operator associated to Eq. \eqref{dirac: wave equation} with vanishing source term can be defined as 
\begin{align*}
\mathcal{S}:=(\mathcal{F}^{+}_{wave})\circ(\mathcal{F}^{-}_{wave})^{-1}: L^{2}(\mathbb{R}\times\mathbb{S}^{2})\rightarrow L^{2}(\mathbb{R}\times\mathbb{S}^{2}).
\end{align*}
Thus the "existence, uniqueness and completeness" questions are well answered.

\bigskip

In this paper, our objective is to investigate the radiation fields of spinor waves. We will begin by examining the linear Dirac equation and associated linear radiation field. Based on the linear theory, we will explore the radiation fields of the semi-linear Dirac equations with spinor null forms. Consider the Cauchy problem of the Dirac equation in Minkowski space $\mathbb{R}^{1+3}$ 
\begin{equation}\label{dirac: dirac equation}
\begin{cases}
\mathcal{D}\phi=\Phi,\\
\phi(0,x)=\phi_{0}(x).
\end{cases}
\end{equation}
where the initial data $\phi_{0}(x)\in C_{0}^{\infty}(\mathbb{R}^{3},\mathbb{C}^{4})$ and $\Phi(t,x)\in C_{0}^{\infty}(\mathbb{R}^{1+3},\mathbb{C}^{4})$ is the source term.
We define the {\it future radiation field} of a spinor wave to Eq.(\ref{dirac: dirac equation}) as:
\begin{align*}
\mathcal{F}^{+}_{Dirac}(\phi_{0},\Phi)(s,\omega):=\lim_{r\rightarrow+\infty}r\phi(s+r,r\omega),
\end{align*}
and the {\it past radiation field} of this spinor wave as:
\begin{align*}
\mathcal{F}^{-}_{Dirac}(\phi_{0},\Phi)(s,\omega):=\lim_{r\rightarrow+\infty}r\phi(s-r,r\omega),
\end{align*}
Here $\phi_{0}$ is the initial data and $\Phi$ is the source term. 

The definition suggests that both $\mathcal{F}^{+}_{Dirac}(\phi_{0},\Phi)(s,\omega)$ and $\mathcal{F}^{-}_{Dirac}(\phi_{0},\Phi)(s,\omega)$ are in $C_{0}^{\infty}(\mathbb{R}\times\mathbb{S}^{2},\mathbb{C}^{4})$. However, in \cite{MR3362023,MR4184660} it is observed that $P(-\omega)\phi$ enjoys better peeling estimate when $t\geq 0$ and $P(\omega)\phi$ enjoys better peeling estimate when $t\leq 0$, which motivates us to define the spinor spaces
\begin{align*}
E^{+}:&=\{\psi\in C_{0}^{\infty}(\mathbb{R}\times\mathbb{S}^{2},\mathbb{C}^{4}):P(-\omega)\psi=0 \},\\
E^{-}:&=\{\psi\in C_{0}^{\infty}(\mathbb{R}\times\mathbb{S}^{2},\mathbb{C}^{4}):P(\omega)\psi=0 \},
\end{align*}
It will be shown that $\mathcal{F}^{+}_{Dirac}(\phi_{0},\Phi)$ is an element in $E^{+}$ and $\mathcal{F}^{-}_{Dirac}(\phi_{0},\Phi)$ is an element in $E^{-}$. 

In this paper, we will study radiation fields of semi-linear Dirac equations with spinor null forms, which are general equations of the Thirring model and the Soler model. We now consider the Cauchy problem for the semi-linear Dirac equation
\begin{equation}\label{dirac: semi-linear dirac equation}
\begin{cases}
\mathcal{D}\phi=N(\phi,\phi),\\
\phi(0,x)=\phi_{0}(x).
\end{cases}
\end{equation}
where $N(\phi,\phi):=\left\langle \gamma^{0}\phi,\phi \right\rangle e_{1}+\left\langle \gamma^{0}\gamma^{5}\phi,\phi \right\rangle e_{2}$ with $e_{1},e_{2}\in\mathbb{C}^{4}$ are complex constant vectors in $\mathbb{C}^{4}$.

\begin{remark}
In the framework of the vector field methods, cubic and higher order terms can also be handled. For simplicity, we only consider quadratic nonlinearities in this paper. 
\end{remark}

As in the case of wave equation, we can establish an isomorphism between the $L^{2}$ energy space of the initial data to the $L^2$ energy space of the radiation field. However, this is insufficient for addressing the nonlinear problem. Next we will extend this linear radiation field map to an isomorphism between a weighted energy space of the initial data and a weighted energy space of the radiation field.

\medskip

{\it The weighted energy space of the initial data $\mathcal{X}$}: We hope this weighted energy space will be suitable for a bootstrap argument to construct a global solution for the Cauchy problem Eq. \eqref{dirac: semi-linear dirac equation}. More precisely, the global solution $\phi$ satisfies
\begin{align}
\|\phi(t,\cdot)\|_{\mathcal{X}}\lesssim \|\phi_{0}\|_{\mathcal{X}}+\|\phi(t,\cdot)\|_{\mathcal{X}}^{2}, \quad \forall\ t\in\mathbb{R}.
\end{align}
Furthermore, we also hope to prove that $\mathcal{X}$ and $\mathcal{Y}^{\pm}$ are compatible with the future (or past) radiation field map. In other words, 
\begin{align*}
\mathcal{F}^{+}_{Dirac}: \mathcal{X}\rightarrow \mathcal{Y}^{+}\quad\text{and}\quad \mathcal{F}^{-}_{Dirac}: \mathcal{X}\rightarrow \mathcal{Y}^{-},
\end{align*}
are both isomorphisms. In Minkowski spacetime $\mathbb{R}^{1+3}$, it is known that the Dirac operator commutes with the conformal Killing vector fields for spinors \cite{MR3362023,MR4184660}. Let us recall the expressions of the conformal Killing vectors for spinors in $\mathbb{R}^{1+3}$.
\begin{itemize}
\item Translation vector fields: 
\begin{align*}
\big\{\partial_{t},\partial_{1},\partial_{2},\partial_{3}\big\},
\end{align*}
\item Rotation vector fields:  $\Omega_{ij}=x^{i}\partial_{j}-x^{j}\partial_{i}$
\begin{align*}
\Big\{\Omega_{12},\ \ \Omega_{23},\ \ \Omega_{31}\Big\},
\end{align*}
\item Lorentz boots vector fields: $\Omega_{0i}=t\partial_{i}+x^{i}\partial_{t}$.
\begin{align*}
\Big\{\Omega_{01},\ \ \Omega_{02},\ \ \Omega_{03}\Big\},
\end{align*}
\item Scaling vector fields:  $\Big\{S=t\partial_{t}+r\partial_{r}\Big\}$.
\end{itemize}
which have played important roles in the study of nonlinear wave equations \cite{MR865359}.

To construct $\mathcal{X}$, we need the vector fields for spinors that are compatible with the symmetry of Dirac equation from \cite{MR3362023,MR4184660}, whose motivations will be detailed in section three.
\begin{itemize}
\item Translation vector fields: 
\begin{align*}
\mathcal{T}:=\big\{\partial_{t},\partial_{1},\partial_{2},\partial_{3}\big\},
\end{align*}
\item Modified rotation vector fields:  
\begin{align*}
\mathcal{R}:=\Big\{\Omega_{12}-\frac{1}{2}\gamma^{1}\gamma^{2},\ \ \Omega_{23}-\frac{1}{2}\gamma^{2}\gamma^{3},\ \ \Omega_{31}-\frac{1}{2}\gamma^{3}\gamma^{1}\Big\},
\end{align*}
\item Modified Lorentz boots vector fields:
\begin{align*}
\mathcal{L}:=\Big\{\Omega_{01}-\frac{1}{2}\gamma^{0}\gamma^{1},\ \ \Omega_{02}-\frac{1}{2}\gamma^{0}\gamma^{2},\ \ \Omega_{03}-\frac{1}{2}\gamma^{0}\gamma^{3}\Big\},
\end{align*}
\item Scaling vector fields:  $\mathcal{S}=\Big\{S=t\partial_{t}+r\partial_{r}\Big\}$.
\end{itemize}
Collect the aforementioned vector fields together and denote them as
\begin{align}\label{dirac: conformal vector fields}
\mathcal{A}:=\mathcal{T}\cup\mathcal{R}\cup\mathcal{L}\cup\mathcal{S},
\end{align}
Then the weighted energy space $\mathcal{X}$ of the initial data can be defined as the completion of $C_{0}^{\infty}(\mathbb{R}^{3},\mathbb{C}^{4})$ under the norm
\begin{align*}
\|\phi_{0}\|_{\mathcal{X}}^{2}:=\sum_{|\alpha|\leq 6, \Gamma\in \mathcal{A}}\big\|\Gamma^{\alpha} \phi(0,\cdot)\big\|^{2}_{L^{2}},
\end{align*}
where $\phi(t,x)$ is the solution of Eq. \eqref{dirac: dirac equation} with vanishing source term ($\Phi\equiv 0$).

The existence of small data global solution has been proved in \cite{MR3362023} and refined by \cite{MR4184660}. For the convenience of readers, we summarize this result below.
\begin{proposition}\label{dirac: global existence for dirac equation}
There exists a constant $\varepsilon_{0}>0$, independent of the initial data, such that if the initial data of Eq. \eqref{dirac: semi-linear dirac equation} satisfies  $\|\phi_{0}\|_{\mathcal{X}}\leq\varepsilon_{0}$, then Eq. \eqref{dirac: semi-linear dirac equation} has a global solution $\phi(t,x)\in L^{\infty}(\mathbb{R}, \mathcal{X})$. Moreover, we have
\begin{align}
\sup_{t\in \mathbb{R}}\|\phi(t,\cdot)\|_{\mathcal{X}}\lesssim \|\phi_{0}\|_{\mathcal{X}}.
\end{align}
\end{proposition}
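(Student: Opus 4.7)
The plan is to combine the standard local well-posedness for hyperbolic systems with a bootstrap argument built around the vector-field energy method adapted to spinors. Local existence in $\mathcal{X}$ on some interval $[0,T^*)$ is standard for smooth compactly supported data; the task is to show that the lifespan is infinite and that the norm stays comparable to $\|\phi_0\|_{\mathcal{X}}$. To that end, I would fix a constant $C$ (to be determined from the Klainerman--Sobolev and energy inequalities) and assume the bootstrap hypothesis $\sup_{t\in[0,T]} \|\phi(t,\cdot)\|_{\mathcal{X}} \leq 2C\varepsilon_0$, with the aim of improving it to $C\varepsilon_0$ once $\varepsilon_0$ is taken small enough.

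The first main step is to derive commutator formulas: each modified vector field $\Gamma\in\mathcal{A}$ was designed precisely so that $[\mathcal{D},\Gamma]=c_\Gamma\,\mathcal{D}$ for some scalar (zero for translations and rotations/boosts, a constant for the scaling). Iterating, for a multi-index $|\alpha|\leq 6$ one obtains $\mathcal{D}(\Gamma^\alpha\phi)=\Gamma^\alpha N(\phi,\phi)+\text{tame terms}$, and then the Leibniz rule for the bilinear $N$ gives $\mathcal{D}(\Gamma^\alpha\phi)=\sum_{\beta+\gamma=\alpha}N(\Gamma^\beta\phi,\Gamma^\gamma\phi)$ up to bounded rearrangements of the spinor indices coming from the $\gamma^\mu\gamma^\nu$ correction pieces of the modified $\mathcal{R}$ and $\mathcal{L}$. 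The standard Dirac energy identity, obtained by pairing with $(\Gamma^\alpha\phi)^*\gamma^0$, then yields
\begin{align*}
\frac{d}{dt}\|\Gamma^\alpha\phi(t,\cdot)\|_{L^2}^2 \lesssim \|\Gamma^\alpha\phi(t,\cdot)\|_{L^2}\,\bigl\|\mathcal{D}(\Gamma^\alpha\phi)\bigr\|_{L^2}.
\end{align*}

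The second step is to convert the bootstrap bound into pointwise decay via a Klainerman--Sobolev inequality for spinors associated with the vector fields in $\mathcal{A}$: using roughly half of the available regularity, one obtains
\begin{align*}
|\phi(t,x)| \lesssim (1+t+|x|)^{-1}(1+|t-|x||)^{-1/2}\|\phi(t,\cdot)\|_{\mathcal{X}},
\end{align*}
with the additional peeling information from \cite{MR3362023,MR4184660} that $P(-\omega)\phi$ gains an extra $(1+|t-|x||)^{-1}$ weight in the forward cone. The spinor null structure then enters decisively: because $N(P(\omega)\psi,P(\omega)\psi)\equiv 0$, one may decompose $\phi=P(\omega)\phi+P(-\omega)\phi$ and check that every surviving term in $N(\Gamma^\beta\phi,\Gamma^\gamma\phi)$ contains at least one factor of the form $P(-\omega)\Gamma^{\bullet}\phi$, i.e.\ the "good" component carrying the extra $(1+|t-|x||)^{-1}$ decay. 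Distributing $|\alpha|\leq 6$ derivatives so that the low-order factor (at most $3$ vector fields) is placed in $L^\infty$ and the high-order factor (at most $6$) in $L^2$, this produces a spacetime integrable bound of the form
\begin{align*}
\bigl\|N(\Gamma^\beta\phi,\Gamma^\gamma\phi)\bigr\|_{L^2}\lesssim (1+t)^{-1-\delta}\|\phi(t,\cdot)\|_{\mathcal{X}}^{2},
\end{align*}
for some $\delta>0$, which after Gr\"onwall gives $\|\phi(t,\cdot)\|_{\mathcal{X}}\leq \|\phi_0\|_{\mathcal{X}}+C'\varepsilon_0\sup_{s\leq t}\|\phi(s,\cdot)\|_{\mathcal{X}}$, improving the bootstrap for $\varepsilon_0$ small.

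The main obstacle I expect is the rigorous implementation of the peeling/null-form interaction at top order: one has to ensure that when $|\alpha|=6$ all derivatives fall on a single factor, the remaining factor still carries the pointwise decay coming from $P(-\omega)$ after differentiation, which requires commuting $P(\pm\omega)$ with the vector fields in $\mathcal{A}$ and absorbing the resulting rotation-type commutators into the null-structure cancellation. Once that combinatorial bookkeeping is in place, the bootstrap closes on $[0,T^*)$, the continuation criterion forces $T^*=\infty$, and the same argument run on $(-\infty,0]$ (where the roles of $P(\omega)$ and $P(-\omega)$ are swapped) yields the global bound stated in the proposition. Since this has already been established in \cite{MR3362023,MR4184660}, one may simply appeal to those results, but the scheme above is the one that will need to be revisited in detail to set up the radiation field construction that follows.
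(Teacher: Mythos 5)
Your overall architecture (commutation of $\mathcal{A}$ with $\mathcal{D}$, Leibniz rule for the spinor null form, Klainerman--Sobolev decay, bootstrap) matches the paper's, but there is a genuine gap at the heart of the nonlinear estimate. After the null-form decomposition $N(X,Y)=N(P(-\omega)X,Y)+N(P(\omega)X,P(-\omega)Y)$, the good factor $P(-\omega)\Gamma^{\bullet}\phi$ may be the \emph{high-order} factor (e.g.\ all six vector fields fall on it). In that case your scheme forces the good factor into $L^{2}$, where the plain energy gives no gain, and the $L^{\infty}$ slot is occupied by the bad component $P(\omega)\Gamma^{\gamma}\phi$, whose Klainerman--Sobolev bound is only $(1+t+r)^{-1}(1+|t-r|)^{-1/2}$; near the light cone this is merely $(1+t)^{-1}$. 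Your claimed bound $\|N(\Gamma^{\beta}\phi,\Gamma^{\gamma}\phi)\|_{L^{2}}\lesssim (1+t)^{-1-\delta}\|\phi(t,\cdot)\|_{\mathcal{X}}^{2}$ therefore fails in this configuration, and Gr\"onwall then yields at best logarithmic growth rather than the uniform bound $\sup_{t}\|\phi(t,\cdot)\|_{\mathcal{X}}\lesssim\|\phi_{0}\|_{\mathcal{X}}$. The improved "peeling" decay you invoke only helps when the good component carries few derivatives, so it cannot repair this case.

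The paper closes exactly this case with an additional ingredient absent from your proposal: the ghost-weight spacetime flux $F^{(\alpha)}(t)=\int_{0}^{t}\int_{\mathbb{R}^{3}}\frac{|P(-\omega)\Gamma^{\alpha}\phi|^{2}}{(1+|t-|x||)^{1+2\mu}}\,dx\,dt$ (Lemma \ref{dirac: Ghost weight}), which is carried in the bootstrap alongside the energies $E^{k}$. When the good factor is high order, one writes the bulk integrand as $\bigl[(1+|t-r|)^{\frac12+\mu}|P(\omega)\Gamma^{\alpha_{1}}\phi||\Gamma^{\alpha_{1}+\alpha_{2}}\phi|\bigr]\cdot\frac{|P(-\omega)\Gamma^{\alpha_{2}}\phi|}{(1+|t-r|)^{\frac12+\mu}}$, uses the Klainerman--Sobolev decay on the bad factor to get $(1+t)^{-1+\mu}$, and applies Cauchy--Schwarz in spacetime against $F^{k}(t^{*})^{1/2}$; this produces the integrable $\varepsilon^{3}$ bound that your pure Gr\"onwall scheme cannot reach. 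Two further, smaller discrepancies: the improved decay of $P(-\omega)\Gamma^{\alpha}\phi$ is not imported from \cite{MR3362023,MR4184660} but derived inside the bootstrap (Lemma \ref{dirac: decay estiamte}) by integrating $(\partial_{t}-\partial_{r})P(-\omega)\Gamma^{\alpha}\phi$ along incoming rays, and it takes the form $(1+t+r)^{-3/2}$ rather than an extra factor of $(1+|t-r|)^{-1}$; and the bootstrap quantity must include both $\sum_{k}E^{k}$ and $\sum_{k}F^{k}$, not the $\mathcal{X}$-norm alone.
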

\begin{remark}
Since the notations in this paper are different from those in \cite{MR3362023,MR4184660}, we will give a proof of this result for completeness.
\end{remark}

\medskip

{\it The weighted energy space of the radiation field $\mathcal{Y}^{+}$}: To construct a weighted energy space for the future radiation field, denoted as $\mathcal{Y}^+$, we will utilize the vector fields for spinors that are compatible with the symmetry of Dirac spinors at null infinity which will be detailed in section three. The expressions of these vector fields are presented as follows.
\begin{itemize}
\item Translation vector fields at future null infinity: $\hat{\mathcal{T}}:=\{\partial_{s}\}$,
\item Rotation vector fields at future null infinity:
\begin{align*}
\hat{\mathcal{R}}:=\Big\{\Omega_{12}-\frac{1}{2}\gamma^{1}\gamma^{2}, \ \ \Omega_{23}-\frac{1}{2}\gamma^{2}\gamma^{3},\ \ \Omega_{31}-\frac{1}{2}\gamma^{3}\gamma^{1}\Big\},
\end{align*}
\item Lorentz boots vector fields at future null infinity:
\begin{align*}
\hat{\mathcal{L}}:=\Big\{&-\partial_{\omega^{1}}+\omega^{1}s\partial_{s}+\omega^{1}I+\frac{1}{2}\gamma^{0}\gamma^{1},\ \
-\partial_{\omega^{2}}+\omega^{2}s\partial_{s}+\omega^{2}I+\frac{1}{2}\gamma^{0}\gamma^{2},\\
&-\partial_{\omega^{3}}+\omega^{3}s\partial_{s}+\omega^{3}I+\frac{1}{2}\gamma^{0}\gamma^{3}\Big\},
\end{align*}
\item Scaling vector field at future null infinity: $\hat{\mathcal{S}}=\{s\partial_{s}\}$.
\end{itemize}
\begin{remark}
The future null infinity can be identified as the cylinder $\mathbb{R}\times\mathbb{S}^2$. In this context,  $\partial_{s}$ is the standard derivative with respect to the first component of $(s,\omega)\in\mathbb{R}\times\mathbb{S}^{2}$ and $\partial_{\omega^{i}}$ is the covariant derivate with respect to the second component of $(s,\omega)\in\mathbb{R}\times\mathbb{S}^{2}$.  If we take an extrinsic view to consider $\mathbb{S}^{2}=\{x\in\mathbb{R}^{3}: |x|=1\}$, then 
\[\partial_{\omega^{i}}: \psi(s,\omega)\mapsto (\partial_{i}-\omega^{i}\partial_{r})\Big(\psi(s,\frac{x}{|x|})\Big),\]
 with $\omega^{i}=\frac{x^{i}}{|x|}$ and $i=1,2,3$. 
Additionally, $\hat{\Omega}_{ij}=\omega^{i}\partial_{\omega^{j}}-\omega^{j}\partial_{\omega^{i}}$.  
\end{remark}

Collect these vector fields at future null infinity and denote them as 
\begin{align}\label{dirac: conformal vector fields at null infinity}
\hat{\mathcal{A}}:=\hat{\mathcal{T}}\cup\hat{\mathcal{R}}\cup\hat{\mathcal{L}}\cup\hat{\mathcal{S}}, 
\end{align}
\begin{remark}
Vector fields in $\hat{\mathcal{A}}$ are indeed well defined vector fields for future radiation fields of the Dirac spinor. See remark \ref{dirac: why we use these vector field}.
\end{remark}

Then the weighted energy space $\mathcal{Y}^{+}$ can be defined as the completion of $E^{+}$ with respect to the norm
\begin{align*}
\|\psi\|_{\mathcal{Y}^+}^{2}:=\sum_{|\alpha|\leq 6, \hat{\Gamma}\in \hat{\mathcal{A}}}\|\hat{\Gamma}^{\alpha}\psi\|^{2}_{L^{2}}.
\end{align*}

To address the semi-linear problem, we should first establish the linear scattering theory for the free Dirac equation $\mathcal{D}\phi=0$:

\begin{theorem}[Linear Isomorphism Theorem]
For the free Dirac equation $\mathcal{D}\phi=0$,  i.e. Eq. \eqref{dirac: dirac equation} with vanishing source term, the future radiation field map
\begin{align*}
\mathcal{F}_{Dirac}^{+}(\cdot,0):&\ \mathcal{X}\rightarrow \mathcal{Y}^{+} \\
                                                           &\phi_{0}\mapsto \mathcal{F}^{+}(\phi_{0},0),
\end{align*}
is an isomorphism.
\end{theorem}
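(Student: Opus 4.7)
The plan is to prove the linear isomorphism in four stages: a base-level $L^2$ identity, intertwining of the conformal vector fields with their null-infinity analogues, the weighted norm equivalence on $C_0^\infty$ extended by density, and surjectivity via backward scattering from future null infinity. First, I would verify that for $\phi_0 \in C_0^\infty(\mathbb{R}^3, \mathbb{C}^4)$ the limit defining $\mathcal{F}_{Dirac}^+(\phi_0, 0)$ exists pointwise and is a smooth, compactly supported function in $E^+$. This rests on the peeling decomposition $\phi = P(\omega)\phi + P(-\omega)\phi$ from \cite{MR3362023,MR4184660}: for $t \geq 0$, $P(-\omega)\phi$ enjoys faster ($r^{-2}$-type) decay, so the scaled limit $\lim_{r\to\infty} r\phi(s+r, r\omega)$ is captured entirely by the $P(\omega)$-component, which automatically satisfies $P(-\omega)(\cdot) = 0$ and therefore lies in $E^+$. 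Combining the Dirac charge conservation $\int_{\mathbb{R}^3}|\phi(t,x)|^2\,dx = \|\phi_0\|_{L^2}^2$ with a Friedlander-type change of variables along the outgoing null hypersurfaces $\{t = r+s\}$ as $r \to \infty$ then produces the base isometry $\|\mathcal{F}_{Dirac}^+(\phi_0, 0)\|_{L^2(\mathbb{R}\times \mathbb{S}^2)} = \|\phi_0\|_{L^2(\mathbb{R}^3)}$.

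Next, I would exploit the fact that every $\Gamma \in \mathcal{A}$ commutes with $\mathcal{D}$ up to a scalar multiple, so that $\Gamma^\alpha \phi$ is the free Dirac evolution of the initial data $(\Gamma^\alpha \phi)|_{t=0}$. I would then establish intertwining identities of the schematic form $\hat{\Gamma} \circ \mathcal{F}_{Dirac}^+ = \mathcal{F}_{Dirac}^+ \circ \Gamma$ by direct calculation: $\partial_t$ intertwines with $\partial_s$; the modified rotations in $\mathcal{R}$ and modified Lorentz boosts in $\mathcal{L}$ intertwine with $\hat{\mathcal{R}}$ and $\hat{\mathcal{L}}$ respectively, after computing how $\Omega_{0i}$ acts on an asymptotic profile of the form $u(s+r, r\omega)/r$; and the scaling $S$ intertwines with $s\partial_s$. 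The spatial translations $\partial_i \in \mathcal{T}$ do not correspond to independent operators at null infinity, since asymptotically $\partial_i \sim -\omega^i \partial_s$ along outgoing null rays; their effect on the radiation field is therefore reproducible from $\partial_s$ times smooth functions of $\omega$, with the discrepancies absorbed into $\hat{\mathcal{L}}$ after applying $P(\omega)$ and using the Clifford identities for $\gamma^0\gamma^i$. Iterating these identities up to order six and combining with the base $L^2$ isometry yields the two-sided equivalence
\begin{equation*}
\|\mathcal{F}_{Dirac}^+(\phi_0, 0)\|_{\mathcal{Y}^+} \simeq \|\phi_0\|_{\mathcal{X}},
\end{equation*}
giving boundedness and injectivity on the dense subspace $C_0^\infty$ and, by completion, on all of $\mathcal{X}$.

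For surjectivity, given $\psi \in E^+$ I would solve the characteristic backward problem for the free Dirac equation with prescribed radiation data $\psi$ at future null infinity: the compatibility condition $P(-\omega)\psi = 0$ built into the definition of $E^+$, together with the peeling structure, makes this problem well posed in the scattering sense, and the trace at $\{t=0\}$ of the resulting free Dirac solution defines the preimage $\phi_0 \in \mathcal{X}$. The isometry then extends this construction from the dense subset $E^+$ to all of $\mathcal{Y}^+$.

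I expect the main obstacle to be the intertwining step. Because $|\mathcal{A}| = 11$ while $|\hat{\mathcal{A}}| = 8$, the correspondence between the two families is not one-to-one and holds only modulo lower-order operators; a careful inductive bookkeeping on the multi-index length, using the commutators $[\Gamma, \Gamma']$ inside $\mathcal{A}$ and $[\hat{\Gamma}, \hat{\Gamma}']$ inside $\hat{\mathcal{A}}$ together with the action of $P(\pm\omega)$ on the Clifford generators $\gamma^0\gamma^i$, will be required to show that the resulting error terms are themselves dominated by $\|\cdot\|_{\mathcal{X}}$ and $\|\cdot\|_{\mathcal{Y}^+}$ at every order up to six.
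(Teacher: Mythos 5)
Your proposal is correct in outline and follows essentially the same route as the paper: an $L^2$ identity transporting the conserved charge to retarded coordinates (Lemma \ref{dirac: 0-order energy}, obtained there from the incoming-cone flux identity of Lemma \ref{dirac: conservation2 incoming}, whose flux density is exactly $2|P(\omega)\phi|^2$), exact intertwining relations computed on the Friedlander-type profile $\phi=\xi_{1}/|x|+\xi_{2}/|x|^{2}$ (Lemmas \ref{dirac: dirac profile} and \ref{dirac: Dirac-relation}), the resulting norm equivalence plus a density argument (Lemma \ref{dirac: embedding}), and surjectivity via receding Dirac waves solving the backward problem from prescribed radiation data (Lemmas \ref{dirac: existence of receding dirac waves}, \ref{dirac: finite energy of receding dirac waves} and \ref{dirac: surjection}). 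One caution on your base step: the charge flux through the outgoing cones $t-r=s$ is $2|P(-\omega)\phi|^{2}$ (Lemma \ref{dirac: energy and flux}), i.e.\ the wrong component for the future radiation field, so your ``Friedlander-type change of variables'' should either be carried out on late time slices, discarding the $P(-\omega)$ part by its improved decay, or, as the paper does, on the incoming cones $t+r=t_{0}$.
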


Based on the linear isomorphism, we can prove a nonlinear local isomorphism since the weighted energy space $\mathcal{X}$ and 
$\mathcal{Y}^{+}$ are also compatible with the nonlinear estimate when we treat the nonlinear term as a perturbation. By employing Duhamel principle and inverse function theorem in Banach space, we can prove that for the semi-linear Dirac equation \eqref{dirac: semi-linear dirac equation} with initial data $\phi_{0}$ and nonlinear term $N(\phi,\phi)$, there exists $\varepsilon_{1}>0$ such that the nonlinear scattering operator
\begin{align*}
\mathcal{F}^{+}_{nonlinear}: B_{\mathcal{X}}(0,\varepsilon_{1})\subseteq &\mathcal{X}\rightarrow U\subseteq\mathcal{Y}^{+},\\
                                                                       &\phi_{0}\mapsto \mathcal{F}^{+}_{Dirac}\big(\phi_{0}, N(\phi,\phi)\big),
\end{align*}
is a local bijection, where $U$ is a neighborhood of $0$ in $\mathcal{Y}^{+}$.

\begin{remark}
Note that in the nonlinear setting of Eq. \eqref{dirac: semi-linear dirac equation}, the weighted energy space $\mathcal{Y}^{-}$ for the past radiation field and the corresponding nonlinear past radiation field map $\mathcal{F}^{-}_{nonlinear}$ can also be constructed by the same method. Therefore,  it can be shown that there exists neighborhood $U^{+}\subset \mathcal{Y}^{+}$ of 0 and neighborhood $U^{-}\subset \mathcal{Y}^{-}$ of 0 such that 
\[\mathcal{S}_{nonlinear}:= (\mathcal{F}^{+}_{nonlinear})\circ(\mathcal{F}^{-}_{nonlinear})^{-1}: U^{-}\rightarrow U^{+},\]
is a bijection.
\end{remark}

\subsection{Notations and main results}
Now let us introduce the necessary notations. In the Minkowski space $\mathbb{R}^{1+3}$, we will use both the standard coordinate $(t,x)=(x^{0},x^{1},x^{2},x^{3})$ and the polar coordinate $(t,r,\omega)$, where $t\in\mathbb{R}$, $r\geq 0$, and $\omega\in\mathbb{S}^{2}$. Here the coordinates are identified by $x^{i}=r\omega^{i}$ for $i=1,2,3$. The outgoing null vector field $L$ and incoming null vector field $\underline{L}$ are defined as 
\begin{align*}
L=\partial_{t}+\partial_{r},\quad
\underline{L}=\partial_{t}-\partial_{r}.
\end{align*}
For any fixed point $(t_{0},x_{0})\in\mathbb{R}^{1+3}$, the corresponding future and past light cones are defined as follows
\begin{align*}
C^{+}_{(t_{0},x_{0})}:=\big\{(t,x): |x-x_{0}|=t-t_{0}, t\geq t_{0}\big\},\\
C^{-}_{(t_{0},x_{0})}:=\big\{(t,x): |x-x_{0}|=t_{0}-t,t\leq t_{0}\big\}.
\end{align*}
The associated solid light cones are given by
\begin{align*}
D^{+}_{(t_{0},x_{0})}:=\big\{(t,x): |x-x_{0}|\leq t-t_{0}, t\geq t_{0}\big\},\\
D^{-}_{(t_{0},x_{0})}:=\big\{(t,x): |x-x_{0}|\leq t_{0}-t,t\leq t_{0}\big\},
\end{align*}
We then have 
\begin{align*}
\partial D^{+}_{(t_{0},x_{0})}=C^{+}_{(t_{0},x_{0})}, \quad \partial D^{-}_{(t_{0},x_{0})}=C^{-}_{(t_{0},x_{0})}.
\end{align*}
Next we consider a fixed point $(t_{0},\omega_{0})\in\mathbb{R}\times\mathbb{S}^{2}$, the associated outgoing and incoming light rays are 
\begin{align*}
l^{+}_{(t_{0},\omega_{0})}:=\big\{(t_{0}+s,s\omega_{0}): s\geq 0\big\},\\
l^{-}_{(t_{0},\omega_{0})}:=\big\{(t_{0}+s,s\omega_{0}): s\leq 0\big\}.
\end{align*}
Then the corresponding future null infinity $\mathcal{I}^{+}$ can be viewed as the collection of outgoing light rays, while the corresponding past null infinity $\mathcal{I}^{-}$ consists of all the incoming light rays:
\begin{align*}
\mathcal{I}^{+}:=\big\{l^{+}_{(t_{0},\omega_{0})}: (t_{0},\omega_{0})\in \mathbb{R}\times\mathbb{S}^{2}\big\},\\
\mathcal{I}^{-}:=\big\{l^{-}_{(t_{0},\omega_{0})}: (t_{0},\omega_{0})\in \mathbb{R}\times\mathbb{S}^{2}\big\},
\end{align*}
We will use the coordinate $(s,\omega)\in \mathbb{R}\times\mathbb{S}^{2}$ to parametrize both $\mathcal{I}^{+}$  and $\mathcal{I}^{-}$.

\medskip

To better understand the spinor bundle on $\mathbb{R}\times\mathbb{S}^{2}$, we introduce the projection operator
\begin{align*}
P(\omega):=\frac{1}{2}(I+\gamma^{0}\gamma^{i}\omega_{i}),\ \ \omega\in\mathbb{S}^{2}
\end{align*}
Here we adopt an extrinsic viewpoint and define $\mathbb{S}^{2}:=\big\{x\in\mathbb{R}^{3}: |x|=1\big\}$ with $\omega^{i}=x^{i}/|x|$. The term $\gamma^0\gamma^i\omega_{i}$ represents a specific linear combination of gamma matrices weighted by the components $\omega^{i}$ of the vector $\omega$. Then the operator $P(\omega)$ essentially projects spinors onto a subspace corresponding to the direction defined by the vector $\omega\in\mathbb{S}^2$. The properties demonstrated below will be crucial in the calculation of spinors.
\begin{lemma}\label{dirac: properties of projection operators}
The projection operator $P(\omega)$ satisfies the following properties:
\begin{itemize}
\item[1)] $I=P(\omega)+P(-\omega)$,
\item[2)] $P(\omega)^{2}=P(\omega),P(-\omega)^{2}=P(-\omega)$,
\item[3)] $P(\omega)P(-\omega)=0$
\end{itemize}
\end{lemma}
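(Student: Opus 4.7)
The plan is to reduce all three identities to a single algebraic computation, namely evaluating $(\gamma^{0}\gamma^{i}\omega_{i})^{2}$, which I expect will equal the identity matrix $I$. Once this square is in hand, every claim follows by straightforward expansion, so the lemma is really a one-line Clifford-algebra calculation dressed up as three statements.

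First I would dispose of item 1) directly from the definition: adding $P(\omega)=\frac{1}{2}(I+\gamma^{0}\gamma^{i}\omega_{i})$ and $P(-\omega)=\frac{1}{2}(I-\gamma^{0}\gamma^{i}\omega_{i})$ causes the $\gamma$-matrix pieces to cancel, leaving $I$. No use of anticommutation is needed at this stage.

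Next I would carry out the central computation. Setting $M := \gamma^{0}\gamma^{i}\omega_{i}$, I would write
\[
M^{2}=\gamma^{0}\gamma^{i}\gamma^{0}\gamma^{j}\omega_{i}\omega_{j} = -(\gamma^{0})^{2}\gamma^{i}\gamma^{j}\omega_{i}\omega_{j},
\]
using the anticommutation $\gamma^{i}\gamma^{0}=-\gamma^{0}\gamma^{i}$ coming from $\{\gamma^{\mu},\gamma^{\nu}\}=-2\eta^{\mu\nu}I$. Because $\eta^{00}=-1$ forces $(\gamma^{0})^{2}=I$, this simplifies to $M^{2}=-\gamma^{i}\gamma^{j}\omega_{i}\omega_{j}$. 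Symmetrising the summation indices (legitimate since $\omega_{i}\omega_{j}$ is symmetric) and applying $\{\gamma^{i},\gamma^{j}\}=-2\delta^{ij}I$ gives $\gamma^{i}\gamma^{j}\omega_{i}\omega_{j}=-|\omega|^{2}I=-I$. Hence $M^{2}=I$.

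With $M^{2}=I$ in hand, items 2) and 3) fall out as purely algebraic consequences: $P(\pm\omega)^{2}=\frac{1}{4}(I\pm M)^{2}=\frac{1}{4}(2I\pm 2M)=P(\pm\omega)$, while $P(\omega)P(-\omega)=\frac{1}{4}(I+M)(I-M)=\frac{1}{4}(I-M^{2})=0$. There is no genuine obstacle; the only point worth emphasising is that the unit-length constraint $|\omega|=1$ is exactly what makes $M$ an involution, so that $P(\pm\omega)$ are honest complementary idempotents. Any mild bookkeeping care is in tracking the signature $(-,+,+,+)$ so as not to flip a sign in $(\gamma^{0})^{2}$ or in $\{\gamma^{i},\gamma^{j}\}$.
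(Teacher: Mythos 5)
Your proof is correct and follows essentially the same route as the paper: item 1) by direct cancellation, and items 2) and 3) by the Clifford-algebra computation $(\gamma^{0}\gamma^{i}\omega_{i})^{2}=I$ using the anticommutation relations and $|\omega|=1$ (the paper simply carries out this expansion inside each product rather than isolating the involution property first). The sign bookkeeping with the convention $\{\gamma^{\mu},\gamma^{\nu}\}=-2\eta^{\mu\nu}I$ is handled correctly.
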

\begin{proof}
The first property can be derived through direct calculation,
\begin{align*}
I=\frac{1}{2}(I+\gamma^{0}\gamma^{i}\omega_{i})+\frac{1}{2}(I-\gamma^{0}\gamma^{i}\omega_{i})=P(\omega)+P(-\omega).
\end{align*}
The second property can be derived from direct calculation and the commutation relations of the gamma matrices,
\begin{align*}
&P(\omega)^{2}=\frac{1}{4}(I+2\gamma^{0}\gamma^{i}\omega_{i}+\gamma^{0}\gamma^{i}\omega_{i}\gamma^{0}\gamma^{j}\omega_{j})\\
&=\frac{1}{4}(I+2\gamma^{0}\gamma^{i}\omega_{i}-\gamma^{i}\gamma^{j}\omega_{i}\omega_{j})=\frac{1}{4}\Big(I+2\gamma^{0}\gamma^{i}\omega_{i}-\frac{1}{2}(\gamma^{i}\gamma^{j}+\gamma^{j}\gamma^{i})\omega_{i}\omega_{j}\Big)\\
&=\frac{1}{4}\Big(I+2\gamma^{0}\gamma^{i}\omega_{i}-\frac{1}{2}(-2\eta^{ij}I)\omega_{i}\omega_{j}\Big)=\frac{1}{2}(I+\gamma^{0}\gamma^{i}\omega_{i}).
\end{align*}
The final property can also be derived from direct calculation and the commutation relations of the gamma matrices,
\begin{align*}
&P(\omega)P(-\omega)=\frac{1}{4}(I-\gamma^{0}\gamma^{i}\omega_{i}\gamma^{0}\gamma^{j}\omega_{j})\\
&=\frac{1}{4}(I+\gamma^{i}\gamma^{j}\omega_{i}\omega_{j})=\frac{1}{4}\Big(I+\frac{1}{2}(\gamma^{i}\gamma^{j}+\gamma^{j}\gamma^{i})\omega_{i}\omega_{j}\Big)\\
&=\frac{1}{4}\Big(I+\frac{1}{2}(-2m^{ij}I)\omega_{i}\omega_{j}\Big)=0.
\end{align*}
\end{proof}

We note that the operator $P(\omega)$ induces a decomposition of $C_{0}^{\infty}(\mathbb{R}\times\mathbb{S}^{2},\mathbb{C}^{4})$ as follows:
\begin{align*}
E^{+}:&=\{\psi\in C_{0}^{\infty}(\mathbb{R}\times\mathbb{S}^{2},\mathbb{C}^{4}): P(-\omega)\psi=0 \},\\
E^{-}:&=\{\psi\in C_{0}^{\infty}(\mathbb{R}\times\mathbb{S}^{2},\mathbb{C}^{4}): P(\omega)\psi=0 \},
\end{align*}
The future radiation fields of Dirac equation \eqref{dirac: dirac equation} are elements of $E^{+}$, while the past radiation fields of Dirac equation \eqref{dirac: dirac equation} are elements of $E^{-}$. 

Recall Katayama-Kubo's definition for spinor null form \cite{MR3362023}. A bilinear form $N(\cdot,\cdot):\mathbb{C}^{4}\times\mathbb{C}^{4}\rightarrow \mathbb{C}^{4}$ is called a {\it spinor null form} iff there exists two constant vectors $e_{1},e_{2}\in\mathbb{C}^{4}$ such that:
\begin{align}
N(X,Y)=\left\langle \gamma^{0}X,Y\right\rangle e_{1}+\left\langle \gamma^{0}\gamma^{5}X,Y\right\rangle e_{2},
\end{align}
whose detailed properties are summarized in the following lemma, whose proof are contained in \cite{MR3362023},
\begin{lemma}\label{dirac: spinor null form}
Let $N(\cdot,\cdot)$ be a spinor null form, then:
\begin{itemize}
\item[(i)] $N(P(\omega)X,P(\omega)X)=0$ for any $\omega\in\mathbb{S}^{2}$ and $X\in\mathbb{C}^{4}$.
\item[(ii)] $N(\cdot,\cdot)$ has good commutation property with vector fields in $\mathcal{A}$. For any $\Gamma\in \mathcal{A}$, there exists another spinor null form $N_{\Gamma}(\cdot,\cdot)$ such that:
\begin{align*}
\Gamma(N(\phi_{1},\phi_{1}))=N(\Gamma\phi_{1},\phi_{2})+N(\phi_{1},\Gamma\phi_{2})+N_{\Gamma}(\phi_{1},\phi_{2}),
\end{align*}
where $\phi_{1},\phi_{2}:\mathbb{R}^{1+3}\rightarrow \mathbb{C}^{4}$ are spinors.
\end{itemize}
\end{lemma}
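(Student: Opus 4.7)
My plan is to treat the two assertions separately, reducing each to direct algebraic identities for the gamma matrices together with the symmetry of the Hermitian inner product on $\mathbb{C}^4$.

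For part (i), the goal is to show $\langle \gamma^0 P(\omega)X, P(\omega)X\rangle = 0$ and $\langle \gamma^0\gamma^5 P(\omega)X, P(\omega)X\rangle = 0$ for every $X\in\mathbb{C}^4$ and $\omega\in\mathbb{S}^2$. Writing the inner product as $\langle u,v\rangle = v^* u$, both identities amount to the matrix equations $P(\omega)^*\gamma^0 P(\omega) = 0$ and $P(\omega)^*\gamma^0\gamma^5 P(\omega) = 0$. First I would record that in the Weyl representation $(\gamma^0)^* = \gamma^0$ and $(\gamma^i)^* = -\gamma^i$, which gives $(\gamma^0\gamma^i)^* = \gamma^0\gamma^i$ and hence $P(\omega)^* = P(\omega)$. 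Then I would expand $P(\omega)\gamma^0 P(\omega)$ using the Clifford relations $\{\gamma^\mu,\gamma^\nu\}=-2\eta^{\mu\nu}I$, noting $\gamma^0\gamma^i\gamma^0 = -\gamma^i$, $(\gamma^0)^2 = I$, and $\gamma^i\gamma^j\omega_i\omega_j = -|\omega|^2 I = -I$; the four terms collapse to zero. For the second identity, the observation that $\gamma^5$ anticommutes with each $\gamma^\mu$ implies $\gamma^5$ commutes with $\gamma^0\gamma^i$, hence $[\gamma^5, P(\omega)]=0$; combining with the previous step gives $P(\omega)\gamma^0\gamma^5 P(\omega) = P(\omega)\gamma^0 P(\omega)\gamma^5 = 0$.

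For part (ii), I would handle the four families in $\mathcal{A}$ case by case. For $\Gamma\in\mathcal{T}$, the vector field is a plain derivative with constant $\gamma$-matrices, so the Leibniz rule gives exactly $\Gamma(N(\phi_1,\phi_2)) = N(\Gamma\phi_1,\phi_2) + N(\phi_1,\Gamma\phi_2)$ and $N_\Gamma = 0$. The same is true for $S\in\mathcal{S}$ since $S$ is a real first-order differential operator and the bracket $e_1$, $e_2$ are constants. The interesting cases are $\Gamma\in\mathcal{R}\cup\mathcal{L}$, where $\Gamma = \Omega + A$ with $\Omega$ a real vector field and $A$ a constant matrix (e.g.\ $-\tfrac{1}{2}\gamma^i\gamma^j$ or $-\tfrac{1}{2}\gamma^0\gamma^i$). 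The $\Omega$-part again distributes by the Leibniz rule. For the matrix part, when $A$ is applied to $N(\phi_1,\phi_2)\in\mathbb{C}^4$ it simply multiplies the constant vectors $e_1,e_2$, producing a term of the form $\langle \gamma^0\phi_1,\phi_2\rangle A e_1 + \langle \gamma^0\gamma^5\phi_1,\phi_2\rangle A e_2$. When $A$ is distributed through $N$ in the other order, the cross-terms $\langle \gamma^0 A\phi_1,\phi_2\rangle + \langle \gamma^0\phi_1, A\phi_2\rangle$ must be analyzed using the adjoint identities $(\gamma^i\gamma^j)^* = \gamma^j\gamma^i$ and $(\gamma^0\gamma^i)^* = \gamma^0\gamma^i$, together with the anticommutation $\gamma^i\gamma^j = -\gamma^j\gamma^i$ (for $i\neq j$) in the rotation case and $(\gamma^0)^2 = I$ with $\{\gamma^0,\gamma^i\}=0$ in the boost case. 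In both instances the cross-terms cancel pairwise.

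Consequently, the defect $N_\Gamma(\phi_1,\phi_2) := \Gamma(N(\phi_1,\phi_2)) - N(\Gamma\phi_1,\phi_2) - N(\phi_1,\Gamma\phi_2)$ equals exactly $\langle \gamma^0\phi_1,\phi_2\rangle\, A e_1 + \langle \gamma^0\gamma^5\phi_1,\phi_2\rangle\, A e_2$ for suitable constant matrix $A$, which is itself a spinor null form with new constant vectors $A e_1$ and $A e_2$. This closes the commutation statement. The only potential obstacle is bookkeeping the sign conventions coming from $(\gamma^i)^* = -\gamma^i$ and the various anticommutation identities; getting these right is what forces the cross-term cancellation and ensures that the residue keeps the structural form of a spinor null form rather than acquiring an extra non-null piece.
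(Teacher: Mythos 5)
Your proof is correct. Note that the paper does not actually prove this lemma — it simply cites Katayama--Kubo \cite{MR3362023} — so your direct algebraic verification is a self-contained version of the standard argument there, and all the key identities you rely on do check out: $P(\omega)^{*}=P(\omega)$ and $P(\omega)\gamma^{0}P(\omega)=0$ together with $[\gamma^{5},P(\omega)]=0$ give (i); and for (ii) the defect is exactly $A\,N(\phi_{1},\phi_{2})$ because the cross terms vanish, since for rotations $A=-\tfrac12\gamma^{i}\gamma^{j}$ is anti-Hermitian and commutes with $\gamma^{0}$ and $\gamma^{0}\gamma^{5}$, while for boosts $A=-\tfrac12\gamma^{0}\gamma^{i}$ is Hermitian and anticommutes with $\gamma^{0}$ and $\gamma^{0}\gamma^{5}$, so in both cases $\gamma^{0}A+A^{*}\gamma^{0}=0$ and $\gamma^{0}\gamma^{5}A+A^{*}\gamma^{0}\gamma^{5}=0$, leaving $N_{\Gamma}$ a spinor null form with constant vectors $Ae_{1},Ae_{2}$ (and $N_{\Gamma}=0$ for translations and scaling).
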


Now let us introduce the energies that will be used in this paper. The classical energy flux through each time slice $\Sigma_{t}$ is defined as 
\begin{align}
 E^{(0)}(t)=\int_{\mathbb{R}^{3}}|\phi(t,x)|^{2}dx,
\end{align}
To define the higher-order energies, we first recall the vector field sets $\mathcal{A}$ and $\hat{\mathcal{A}}$. 
\begin{align*}
   \mathcal{A}=\big\{\partial_{\mu},\ \Omega_{\mu\nu}-\frac{1}{2}\gamma^{\mu}\gamma^{\nu}, \ S\big\}, 
\end{align*}
\begin{align*}
   \hat{\mathcal{A}}=\Big\{\partial_{s},\ \hat{\Omega}_{ij}-\frac{1}{2}\gamma^{i}\gamma^{j},\ -\partial_{\omega^{i}}+\omega^{i}s\partial_{s}+\omega^{i}I+\frac{1}{2}\gamma^{0}\gamma^{i} ,\ s\partial_{s}\Big\},
\end{align*}
where $\mu,\nu=0,1,2,3$ and $i,j=1,2,3$. For any multi-index $\alpha=(\alpha_{1},\alpha_{2},...,\alpha_{11})$, we denote the differential operator $\Gamma_{1}^{\alpha_{1}}...\Gamma_{11}^{\alpha_{11}}$ as $\Gamma_{k}\in\mathcal{A}$ with $k=1,...,11$ and the differential operator $\hat{\Gamma}_{1}^{\alpha_{1}}...\hat{\Gamma}_{11}^{\alpha_{11}}$ as $\hat{\Gamma}_{k}\in\hat{\mathcal{A}}$ with $k=1,...,11$. Then the higher order energy through $\Sigma_{t}$ is 
\begin{align*}
E^{(\alpha)}(t)=\int_{\mathbb{R}^{3}}|\Gamma^{\alpha}\phi(t,x)|^{2}dx.
\end{align*}
For $t\geq 0$, the weighted energy is defined as
\begin{align*}
F^{(0)}(t)=\int_{0}^{t}\int_{\mathbb{R}^{3}}\frac{|P(-\omega)\phi(t,x)|^{2}}{\big(1+|t-|x||\big)^{1+2\mu}}dxdt,
\end{align*}
while the higher order energy flux is given by
\begin{align*}
F^{(\alpha)}(t)=\int_{0}^{t}\int_{\mathbb{R}^{3}}\frac{|P(-\omega)\Gamma^{\alpha}\phi(t,x)|^{2}}{(1+|t-|x||)^{1+2\mu}}dxdt.
\end{align*}
Then for any given $t^{*}\in [0,+\infty]$, the total energy and the total weighted energy, indexed by a integer $k\geq0$, are defined as
\[ E^{k}(t^{*})=\sup_{0\leq t\leq t^{*}}\sum_{|\alpha|=k}E^{(\alpha)}(t)\quad\text{and}\quad
F^{k}(t^{*})=\sup_{0\leq t\leq t^{*}}\sum_{|\alpha|=k}F^{(\alpha)}(t).\]
A crucial aspect of this work is the following a priori energy estimate. These estimates directly lead to the global existence of solutions for the semi-linear Dirac equation with a spinor null form.
\paragraph{Main Energy Estimate} Let $\mu\in (0,\frac{1}{2})$ and $N_{*}\in\mathbb{Z}_{\geq 6}$. There exists a universal constant $\varepsilon_{0}>0$ such that if the initial data $\phi_{0}$ satisfy
\begin{align*}
\mathcal{E}^{N_{*}}(0)=\sum_{|\alpha|\leq N_{*}}\|\Gamma^{\alpha}\phi_{0}\|^{2}_{L^{2}(\mathbb{R}^{3})}\leq \varepsilon_{0}^{2},
\end{align*}
then the semi-linear Dirac equation with spinor null form \eqref{dirac: semi-linear dirac equation} admits a unique global solution $\phi(t,x)$. Furthermore, there is a universal constant $C$ such that the following energy estimates are satisfied:
\[
\sum_{|\alpha|\leq N_{*}}\|\Gamma^{\alpha}\phi(t,\cdot)\|^{2}_{L^{2}(\mathbb{R}^{3})}\leq C\mathcal{E}^{N_{*}}(0)
\]
\[
    \sum_{|\alpha|\leq N_{*}}\int_{0}^{t}\int_{\mathbb{R}^{3}}\frac{|P(-\omega)\Gamma^{\alpha}\phi(t,x)|^{2}}{\big(1+|t-|x||\big)^{1+2\mu}}dxdt\leq C\mathcal{E}^{N_{*}}(0).
\]
\begin{remark}
We note that $\mathcal{E}^{N_{*}}(0)$ is well defined since $\partial_{t}\phi_{0}$ is determined by the equation itself. The proof of the aforementioned main estimate actually yields a more refined result. This demonstrates the null structure of the semi-linear Dirac equation \eqref{dirac: semi-linear dirac equation} with the spinor null form.
\end{remark}

\begin{remark}
It should be noted that these results are valid for all  $N_{*}\in\mathbb{Z}_{\geq 6}$. For the remainder of the paper, we will set 
$N_{*}=6$. We will take $\mu=\frac{1}{4}$.
\end{remark}

\paragraph{Refined Energy Estimates}Let $\mu\in (0,\frac{1}{2})$ and $N_{*}\in\mathbb{Z}_{\geq 6}$. There exists a universal constant $\varepsilon_{0}>0$ such that if the initial data $\phi_{0}$ satisfies
\begin{align*}
\mathcal{E}^{N_{*}}(0)=\sum_{|\alpha|\leq N_{*}}\|\Gamma^{\alpha}\phi_{0}\|^{2}_{L^{2}(\mathbb{R}^{3})}\leq \varepsilon_{0}^{2},
\end{align*}
then the global solution $\phi(t,x)$ to the semi-linear Dirac equation \eqref{dirac: semi-linear dirac equation}  satisfies the following estimates: there is a universal constant $C$ such that 
\begin{align*}
\sum_{|\alpha|\leq N_{*}}\|\Gamma^{\alpha}\phi(t,\cdot)\|^{2}_{L^{2}(\mathbb{R}^{3})}\leq C\mathcal{E}^{N_{*}}(0)+C(\mathcal{E}^{N_{*}}(0))^{2}.
\end{align*}

Assuming that $\phi(t,x)$ is the solution constructed as described above, we will now define the future radiation field associated with
$\phi(t,x)$. 

\paragraph{Existence of Future Radiation Fields.} For the solution $\phi$ obtained in the Main energy estimates, the associated nonlinear term $N(\phi,\phi)$ can be viewed as the source term. Therefore, it is reasonable to define the {\it nonlinear future radiation field} associated to semi-linear Dirac equation \eqref{dirac: semi-linear dirac equation} as follows,
\[\mathcal{F}_{nonlinear}^{+}(\phi_{0}):=\mathcal{F}^{+}_{Dirac}(\phi_{0},N(\phi,\phi)),\]
It will be shown (in Lemma \ref{part1} and Lemma \ref{part2}) that there exists $\varepsilon_{1}>0$ such that \[\mathcal{F}^{+}_{nonlinear}\in C^{1}(B_{\mathcal{X}}(0,\varepsilon_{1}),\mathcal{Y}^{+}),\] in the sense of Fr{\'e}chet derivative. 
Furthermore, it will be shown (in Lemma \ref{dirac: the same definition}) that when $\phi_{0}\in C_{0}^{\infty}(\mathbb{R}^{3},\mathbb{C}^{4})$ and $\|\phi_{0}\|_{\mathcal{X}}\leq \varepsilon_{0}$, $\mathcal{F}_{nonlinear}^{+}(\phi_{0})$ coincides with the following limit
\begin{equation*}
\lim_{r\rightarrow+\infty}r\phi(r+s,r\omega):=r\phi(|s|+s,|s|\omega)+\int_{|s|}^{+\infty}(\partial_{t}+\partial_{r})(r\phi(s+r,r\omega))dr,
\end{equation*}
for almost every $(s,\omega)\in\mathbb{R}\times\mathbb{S}^{2}$.

We are now prepared to present the main results of this paper.
\begin{theorem}[Local Isomorphism Theorem]\label{dirac: main theorem rough version}
 For the semi-linear Dirac equation \eqref{dirac: semi-linear dirac equation} with nonlinear term $N(\phi,\phi)$, there exists $\varepsilon_{2}>0$ such that 
\begin{align*}
\mathcal{F}^{+}_{nonlinear}: B_{\mathcal{X}}(0,\varepsilon_{2})\subseteq &\mathcal{X}\rightarrow U\subseteq\mathcal{Y}^{+},\\
                                                                       &\phi_{0}\mapsto \mathcal{F}^{+}_{Dirac}\big(\phi_{0}, N(\phi,\phi)\big),
\end{align*}
is a $C^{1}$ diffeomorphism, where $U$ is a neighborhood of $0$ in $\mathcal{Y}^{+}$.
\end{theorem}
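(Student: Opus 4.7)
The plan is to realize $\mathcal{F}^+_{nonlinear}$ as a $C^1$ perturbation of the linear radiation field map at the origin and apply the inverse function theorem for $C^1$ maps between Banach spaces. The Linear Isomorphism Theorem supplies the required invertibility of the linearization, Proposition \ref{dirac: global existence for dirac equation} supplies the nonlinear solution map, and the spinor null structure from Lemma \ref{dirac: spinor null form} controls the quadratic correction.

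First, I would exploit the linearity of the Dirac operator and Duhamel's formula to split
\[
\mathcal{F}^+_{nonlinear}(\phi_{0}) \;=\; \mathcal{F}^+_{Dirac}(\phi_{0},0) \;+\; \mathcal{F}^+_{Dirac}\bigl(0, N(\phi,\phi)\bigr),
\]
where $\phi=\phi[\phi_{0}]$ is the unique global solution of \eqref{dirac: semi-linear dirac equation} guaranteed by Proposition \ref{dirac: global existence for dirac equation}. The first summand is precisely the linear scattering map $\mathcal{F}^+_{Dirac}(\cdot,0):\mathcal{X}\to\mathcal{Y}^+$, which is a bounded isomorphism by the Linear Isomorphism Theorem.

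Next, I would establish that the nonlinear correction $\phi_{0}\mapsto \mathcal{F}^+_{Dirac}\bigl(0,N(\phi[\phi_{0}],\phi[\phi_{0}])\bigr)$ belongs to $C^{1}(B_{\mathcal{X}}(0,\varepsilon_{1}),\mathcal{Y}^+)$, relying on Lemmas \ref{part1} and \ref{part2}. The key input is the quadratic bound
\[
\bigl\|\mathcal{F}^+_{Dirac}(0,N(\phi,\phi))\bigr\|_{\mathcal{Y}^+} \;\lesssim\; \|\phi_{0}\|_{\mathcal{X}}^{2},
\]
obtained by combining $\sup_{t}\|\phi(t,\cdot)\|_{\mathcal{X}}\lesssim\|\phi_{0}\|_{\mathcal{X}}$ from Proposition \ref{dirac: global existence for dirac equation} with the commutation property of $N$ under $\mathcal{A}$ (Lemma \ref{dirac: spinor null form}(ii)) and the weighted flux bound on $P(-\omega)\Gamma^{\alpha}\phi$ from the Main Energy Estimate. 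Since the correction vanishes to second order, its Fréchet derivative at the origin is zero and consequently
\[
D\mathcal{F}^+_{nonlinear}(0) \;=\; \mathcal{F}^+_{Dirac}(\cdot,0),
\]
which is an isomorphism $\mathcal{X}\to\mathcal{Y}^+$. The inverse function theorem then produces $\varepsilon_{2}\in(0,\varepsilon_{1}]$ and a neighborhood $U$ of $0\in\mathcal{Y}^+$ such that $\mathcal{F}^+_{nonlinear}$ restricts to a $C^{1}$ diffeomorphism $B_{\mathcal{X}}(0,\varepsilon_{2})\to U$.

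The principal obstacle is the verification of $C^{1}$ regularity in the weighted spaces $\mathcal{X}$ and $\mathcal{Y}^+$. Concretely one must show, in a mutually compatible manner, that (i) the solution map $\phi_{0}\mapsto \phi[\phi_{0}]$ is $C^{1}$ into the spacetime energy space dictated by $\mathcal{X}$, via a standard difference argument on the linearized Dirac equation; (ii) the bilinear form $(\phi,\psi)\mapsto N(\phi,\psi)$ is bounded from this spacetime space into a source-term space for which $\Phi\mapsto\mathcal{F}^+_{Dirac}(0,\Phi)$ extends continuously to $\mathcal{Y}^+$; and (iii) these estimates survive six commutations with the vector fields in $\mathcal{A}$. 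The delicate step in each case is to trade the absent decay of $P(\omega)\phi$ against the peeling decay of $P(-\omega)\phi$ encoded in $F^{k}$, using the cancellation $N(P(\omega)X,P(\omega)X)=0$ from Lemma \ref{dirac: spinor null form}(i); this is precisely where the particular choice of weighted norms defining $\mathcal{X}$ and $\mathcal{Y}^+$ is essential.
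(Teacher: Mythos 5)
Your proposal is correct and follows essentially the same route as the paper: decompose $\mathcal{F}^{+}_{nonlinear}$ through the solution map, the quadratic source map, and the bilinear radiation field map, obtain $C^{1}$ regularity from Lemmas \ref{part1} and \ref{part2}, identify $d\mathcal{F}^{+}_{nonlinear}(0)$ with the linear map $\mathcal{F}^{+}_{Dirac}(\cdot,0)$ (an isomorphism by Theorem \ref{dirac: linear isomorphism theorem}), and conclude via the Banach-space inverse function theorem. The only cosmetic difference is that you argue the derivative at the origin via the quadratic smallness of the correction term, whereas the paper computes $d\mathcal{F}^{+}_{nonlinear}(\phi_{0})$ at general $\phi_{0}$ by the chain rule and then specializes to $\phi_{0}=0$ using $S(0)=0$; both are valid.
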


\begin{remark}
There are three small positive constants $\varepsilon_{0}\geq \varepsilon_{1}\geq \varepsilon_{2}$ that are chosen for different purpose. $\varepsilon_{0}$ is chosen such that Eq. \eqref{dirac: semi-linear dirac equation} admits global solution when $\phi_{0}\in B_{\mathcal{X}}(0,\varepsilon_{0})$. $\varepsilon_{1}$ is chosen such that the nonlinear radiation field map $\mathcal{F}^{+}_{nonlinear}$ is $C^{1}(B_{\mathcal{X}}(0,\varepsilon_{1}),\mathcal{Y}^{+})$. $\varepsilon_{2}$ is chosen by inverse function theorem for the nonlinear radiation field map $\mathcal{F}^{+}_{nonlinear}$ at $0\in\mathcal{X}$.
\end{remark}

The rest of the paper is organized as follows. In section two, we review some technical results from Friedlander \cite{MR583989}. These results will be used to prove the linear isomorphism theorem later. In section three, we examine the symmetry of Dirac spinors and the symmetry of radiation fields at future null infinity. This explains how to find vector fields that are used to construct the weighted energy spaces. The necessary energy estimates include a Ghost weight estimate are presented in section four. We reprove a small-data global solution for Eq. \eqref{dirac: semi-linear dirac equation} in section five. In section six, we study the properties of the radiation field maps in our weighted energy spaces. Based on the above, we prove our main theorem in section seven using a functional framework.

\section{Technical lemmas on radiation fields}
In this section, we will present some lemmas that will be used in proving the linear isomorphism theorem.
The first lemma concerns the asymptotic behavior of wave equation at future null infinity, which is Theorem 1.6 of page 486 in \cite{MR583989}.
\begin{lemma}\label{dirac: smooth profiles}
Let $u(t,x):\mathbb{R}^{1+3}\rightarrow \mathbb{R}$ be a solution of Eq \eqref{dirac: wave equation} with vanishing source term $G\equiv 0$. Then there exits $v\in C^{\infty}([0,\infty)\times\mathbb{R}\times\mathbb{S}^{2})$ such that:
\begin{align*}
v(\rho,s,\omega)=\frac{1}{\rho}u(\frac{1}{\rho}\omega,\frac{1}{\rho}+\tau).
\end{align*}
By strong Huygens principle, $v(\rho,s,\omega)$ has compact support in the second variable.
\end{lemma}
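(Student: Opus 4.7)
The plan is to perform a Penrose-type conformal compactification near future null infinity and reduce the statement to a well-posed characteristic initial value problem. First I would change to the retarded null coordinate $s=t-r$ and the inverse-radius variable $\rho=1/r$, and rescale the unknown by a factor of $r$. A direct computation (using that for $w=ru$ one has $\partial_{r}^{2}u+\tfrac{2}{r}\partial_{r}u=\tfrac{1}{r}\partial_{r}^{2}w$) shows that the flat wave operator becomes
\[
\Box u \;=\; \rho^{3}\Bigl(2\,\partial_{s}\partial_{\rho}v+2\rho\,\partial_{\rho}v+\rho^{2}\partial_{\rho}^{2}v+\Delta_{\mathbb{S}^{2}}v\Bigr),
\]
where $v(\rho,s,\omega):=\rho^{-1}u\bigl(s+1/\rho,\omega/\rho\bigr)$. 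Hence, for $\rho>0$, the equation $\Box u=0$ is equivalent to a linear hyperbolic equation whose coefficients extend smoothly down to $\rho=0$ and whose principal part is the model Goursat operator $2\partial_{s}\partial_{\rho}$ plus the tangential Laplacian on the sphere.

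Next I would establish existence and smoothness of $v$ up to $\rho=0$ by treating this as a double-characteristic (Goursat) problem. Since $u$ is a smooth solution of the free wave equation with compactly supported Cauchy data, the strong Huygens principle guarantees that $u$, and therefore $v(\rho,\cdot,\cdot)$, is supported in a fixed strip $|s|\le R$ independently of $\rho$. On any slab $[\rho_{0},\rho_{1}]\times\mathbb{R}\times\mathbb{S}^{2}$ with $\rho_{0}>0$, the defining relation $v=ru$ is just a smooth change of variables, so $v$ is $C^{\infty}$ there. On the slab $[0,\rho_{0}]\times\mathbb{R}\times\mathbb{S}^{2}$, I would solve the reduced equation with characteristic data at $\rho=\rho_{0}$ provided by the trace of $ru$ on the timelike cylinder $r=1/\rho_{0}$, using the standard theory of Goursat problems for symmetric hyperbolic systems with smooth coefficients. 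Commuting $\partial_{s}$, $\rho\partial_{\rho}$, and spherical derivatives through the equation and closing $L^{2}$-energy estimates uniformly in $\rho$ produces arbitrarily many derivatives of $v$; uniqueness and finite speed of propagation then identify this solution with $ru$ in the overlap region $\rho>0$.

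The main obstacle is the degeneracy of the principal symbol at $\rho=0$: one cannot simply apply non-characteristic Cauchy theory to evolve in $\rho$ all the way to zero. The point is to exploit the special form $2\partial_{s}\partial_{\rho}+(\text{tangential})$ of the leading operator, which allows one to integrate once along the $s$-characteristics and to derive energy estimates whose constants remain bounded as $\rho\to 0^{+}$. Once these uniform estimates are in place, the compact support in $s$ follows directly from Huygens, the smoothness up to $\rho=0$ follows from the uniform higher-order estimates together with tangential regularity, and the existence of $v\in C^{\infty}([0,\infty)\times\mathbb{R}\times\mathbb{S}^{2})$ with the stated property is immediate.
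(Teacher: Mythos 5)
Your reduction of $\Box u=0$ to $2\partial_{s}\partial_{\rho}v+2\rho\partial_{\rho}v+\rho^{2}\partial_{\rho}^{2}v+\Delta_{\mathbb{S}^{2}}v=0$ with $v=ru$ is correct, as is the support statement $|s|\le R$ from strong Huygens. The gap is in the step that is supposed to give smoothness up to $\rho=0$. First, the hypersurface $\{\rho=\rho_{0}\}$ is \emph{not} characteristic for this operator: its conormal $d\rho$ satisfies $\tilde g^{-1}(d\rho,d\rho)=\rho_{0}^{2}>0$ for the rescaled metric $\tilde g=-\rho^{2}ds^{2}+2\,ds\,d\rho+d\omega^{2}$, so it is a timelike cylinder ($r=1/\rho_{0}$ in the original picture), and only $\{\rho=0\}$ and $\{s=\mathrm{const}\}$ are null. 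Evolving in $\rho$ from the trace of $ru$ on a timelike surface is a sideways problem, not a Goursat problem: prescribing a single trace is not enough data on a non-characteristic surface, and even with Cauchy data the sideways wave equation is ill-posed in the Hadamard sense, so ``standard theory of Goursat problems for symmetric hyperbolic systems'' does not apply as invoked. (The compact support in $s$ does make a uniqueness statement plausible, but existence/regularity cannot be obtained this way without a genuinely different formulation, e.g.\ data on the null surface $\{s=-R\}$ together with a suitable treatment of the timelike boundary, or Friedlander's route below.)

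Second, even granting uniform energy bounds, commuting $\partial_{s}$, $\rho\partial_{\rho}$ and spherical derivatives only yields conormal (b-)regularity, i.e.\ uniform bounds on $\partial_{s}^{a}(\rho\partial_{\rho})^{b}\Omega^{c}v$. That does not give $v\in C^{\infty}$ up to $\rho=0$: for example $\rho\log\rho$ has bounded b-derivatives of all orders but is not $C^{1}$ at $\rho=0$. Smoothness at $\rho=0$ is exactly the absence of such log terms and requires an extra argument exploiting the structure of the equation (e.g.\ integrating $2\partial_{s}\partial_{\rho}v=-\Delta_{\mathbb{S}^{2}}v-2\rho\partial_{\rho}v-\rho^{2}\partial_{\rho}^{2}v$ in $s$ from $s=-R$, where $v$ vanishes, to generate and control the full Taylor expansion in $\rho$), which your sketch does not supply. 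Note that the paper itself does not prove this lemma but quotes Friedlander \cite{MR583989}, whose argument avoids both issues by conformally embedding Minkowski space into the Einstein cylinder: there the rescaled solution solves a regular hyperbolic equation with smooth Cauchy data on a compact spatial slice, and smoothness across null infinity is immediate from standard (non-characteristic, non-sideways) Cauchy theory. Either adopt that route, or substantially rework the boundary-regularity step; as written, the key step fails.
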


The second lemma concerns the existence of receding waves, which is Lemma 3.5 of page 501 in \cite{MR583989}.
\begin{lemma}\label{dirac: existence of receding waves}
Let $\psi(s,\omega)\in C^{\infty}(\mathbb{R}\times\mathbb{S}^{2})$ and there exists $s_{1}$ such that $\psi\equiv 0$ for $s>s_{1}$. Then there is a unique $u(t,x)\in C^{\infty}(\mathbb{R}^{1+3})$ satisfying:
\begin{itemize}
\item[1)] $u(t,x):\mathbb{R}^{1+3}\rightarrow \mathbb{R}$ be a solution of free wave equation, i,e, $\Box u=0$,
\item[2)] $u(t,x)=0, t-r\geq s_{1}$,
\item[3)] $\lim_{r\rightarrow+\infty}ru(r+s,r\omega)=\psi(s,\omega)$,
\item[4)] $v(\rho,s,\omega)=\frac{1}{\rho}u(\frac{1}{\rho}\omega,\frac{1}{\rho}+\tau)\in C^{\infty}([0,\infty)\times\mathbb{R}\times\mathbb{S}^{2})$.
\end{itemize}
We call such $u(t,x)$ a receding wave.
\end{lemma}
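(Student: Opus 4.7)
The plan is to reformulate the lemma via a conformal change of variables near future null infinity. In coordinates $(\rho,s,\omega)$ with $\rho=1/r$ and $s=t-r$, and with the rescaled unknown $v(\rho,s,\omega):=r\,u(r+s,r\omega)$, a direct computation starting from $\Box u=0$ yields
\[
2\partial_\rho\partial_s v + 2\rho\,\partial_\rho v + \rho^2\,\partial_\rho^2 v + \slashed{\Delta}_{\mathbb{S}^2}\, v = 0,
\]
whose coefficients extend smoothly across $\rho=0$. The hypersurface $\{\rho=0\}$ models $\mathcal{I}^+$ and is characteristic for this equation, so requirement (3) translates into the Dirichlet-type boundary condition $v|_{\rho=0}=\psi$, while the hypothesis $\psi\equiv 0$ for $s>s_1$ supplies a second vanishing condition on the non-characteristic hypersurface $\{s=s_1\}$.

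The existence of $v$ then becomes a characteristic (Goursat-type) initial value problem. My plan is to pass to the Penrose compactification (the Einstein cylinder), where the conformal boundary $\mathcal{I}^+$ embeds as a smooth characteristic hypersurface and the rescaled wave equation is strictly hyperbolic throughout. The one-sided support of $\psi$ localizes the data away from the singular points $i^0,\, i^\pm$, so classical theorems on the Goursat problem for strictly hyperbolic equations (Leray, or Friedlander's own construction) will deliver a unique smooth $v\in C^\infty([0,\rho_0]\times\mathbb{R}\times\mathbb{S}^2)$ realising property (4).

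Defining $u:=v/r$ in the exterior region $r>1/\rho_0$ yields a smooth solution of $\Box u=0$ there satisfying (2) and (3) by construction. To produce the global $u\in C^\infty(\mathbb{R}^{1+3})$, I would fix $T$ large, restrict $u$ and $\partial_t u$ to the slice $\{t=T\}$, and solve the Cauchy problem for $\Box$ forwards and backwards from this slice; by finite speed of propagation together with $\psi\equiv 0$ for $s>s_1$, these restrictions are smooth and compactly supported, so the extension is unambiguous. Uniqueness would follow by applying the same characteristic/Cauchy argument to the difference of two candidate solutions: the difference has vanishing radiation field and vanishes for $t-r\geq s_1$, which by Goursat uniqueness at $\mathcal{I}^+$ combined with domain of dependence forces it to vanish everywhere.

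The main obstacle I expect is the smoothness of $v$ up to the characteristic boundary $\{\rho=0\}$: the principal symbol of the equation in the $(s,\rho)$ variables degenerates there, so one cannot invoke a strictly hyperbolic Goursat theorem directly on the slab $\rho\in[0,\rho_0]$. The whole argument therefore hinges on transferring the problem to a spacetime where the rescaled wave equation is genuinely strictly hyperbolic, or equivalently on combining a formal Taylor expansion in $\rho$ along $\mathcal{I}^+$ (obtained by successive differentiation of the PDE and integration in $s$ from $+\infty$, using the one-sided support of $\psi$) with a Borel-type summation and a correction term that solves a vanishing-to-infinite-order characteristic Cauchy problem.
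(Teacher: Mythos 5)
The paper does not supply its own proof of this lemma: the statement is imported verbatim as Lemma~3.5 of Friedlander \cite{MR583989}, and the authors cite it as such. Your proposal should therefore be measured against Friedlander's argument rather than against anything in this paper, and it reconstructs that argument in its essentials. Friedlander's method is exactly the Penrose conformal compactification: Minkowski space is embedded conformally into the Einstein cylinder $\mathbb{R}\times\mathbb{S}^3$, so that $\mathcal{I}^+$ becomes a smooth characteristic hypersurface interior to the compactified spacetime and the conformally coupled wave equation is strictly hyperbolic with smooth coefficients there. The hypothesis $\psi\equiv0$ for $s>s_1$ localizes the Goursat data away from the singular vertex $i^+$, and a classical theorem on the characteristic (Goursat) Cauchy problem then gives existence and regularity of the rescaled solution; uniqueness follows by a domain-of-dependence argument exactly as you indicate. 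In other words, your primary plan is the correct one, and the formal-Taylor-plus-Borel alternative you append at the end is an unnecessary detour: the degeneracy of the principal symbol at $\rho=0$ that you correctly flag is neutralized by the conformal rescaling itself, not by asymptotic summation.

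One step of your outline needs tightening. If $v$ is constructed only on a thin collar $\rho\in[0,\rho_0]$, then $u=\rho\,v$ is defined only on $\{r>1/\rho_0\}$, so the full hypersurface $\{t=T\}$ does not lie inside the domain where $u$ currently exists and you cannot simply restrict Cauchy data to it. The fix is to first note that the Goursat solution vanishes on $\{t-r\geq s_1,\ r>1/\rho_0\}$, extend $u$ by zero to all of $\{t-r\geq s_1\}$ (consistent by uniqueness and smooth because the sets overlap on an open region where $u\equiv 0$), and only then choose $T>s_1+1/\rho_0$ so that the support of the Cauchy data on $\{t=T\}$ falls inside the region already constructed. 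More cleanly, solving the Goursat problem on the full Einstein cylinder rather than on a collar of $\mathcal{I}^+$ makes the backward domain of dependence cover all of Minkowski space at once, and the separate extension step disappears. With that adjustment your plan is essentially Friedlander's.
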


The third lemma concerns the finite energy property of receding waves, which is Lemma 3.12 of page 502 in \cite{MR583989}.
\begin{lemma}\label{dirac: finite energy of receding waves}
Assuming $\eta(s,\omega)\in C^{\infty}_{0}(\mathbb{R}\times\mathbb{S}^{2})$, then
\begin{align*}
\zeta(s,\omega):=-\int_{s}^{+\infty}\eta(\tau,\omega)d\tau,
\end{align*}
satisfies $\zeta(s,\omega)\in C^{\infty}(\mathbb{R}\times\mathbb{S}^{2})$. Moreover there exists $s_{1}$ such that $\psi\equiv 0$ for $s>s_{1}$ and $\partial_{s}\zeta(s,\omega)=\eta(s,\omega)$. Let $u(t,x)\in C^{\infty}(\mathbb{R}^{1+3})$ denote the unique receding wave constructed for $\zeta(s,\omega)$ in previous Lemma \ref{dirac: existence of receding waves}, then the receding wave $u(t,x)$ satisfies: $(u,\partial_{t}u)(0,\cdot)$ is bounded in $\dot{H}^{1}(\mathbb{R}^{3})\times L^{2}(\mathbb{R}^{3})$.
\end{lemma}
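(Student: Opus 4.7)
Since $\eta \in C_0^\infty(\mathbb{R}\times\mathbb{S}^2)$, choose $s_1$ so that $\eta(\tau,\omega)=0$ for $\tau > s_1$. The integral defining $\zeta$ is then smooth by differentiation under the integral sign (the compact support of $\eta$ trivialises any boundary issue), vanishes identically on $\{s > s_1\}$ since the integrand does, and satisfies $\partial_s \zeta = \eta$ by the fundamental theorem of calculus. Applying Lemma~\ref{dirac: existence of receding waves} to $\zeta$ then produces the unique receding wave $u \in C^\infty(\mathbb{R}^{1+3})$; note in particular that $u$ and all its derivatives vanish identically on $\{t-r \geq s_1\}$.

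The finite energy assertion will follow from energy conservation combined with the null-infinity asymptotics of $u$. Set $E(t) := \tfrac{1}{2}\int_{\mathbb{R}^3}(|\partial_t u|^2 + |\nabla u|^2)(t,x)\,dx$. Apply the standard divergence identity for the energy-momentum tensor $T_{\mu\nu}(u) = \partial_\mu u\,\partial_\nu u - \tfrac12 \eta_{\mu\nu}\partial^\sigma u\,\partial_\sigma u$ of $\Box u = 0$ on the domain $D_T := \{(t,x) : 0 \leq t \leq T,\ t - r \leq s_1\}$. The outgoing null face $\{t-r = s_1\}$ contributes no flux because $u$ is flat there; the spatial infinity contribution vanishes thanks to the expansion $u(t,x) = \zeta(t-r,\omega)/r + O(r^{-2})$ supplied by Lemma~\ref{dirac: smooth profiles}, which forces $T_{\mu\nu}(u)$ to decay fast enough. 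Consequently $E(0) = E(T)$ for every $T > 0$.

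To evaluate $\lim_{T \to \infty} E(T)$, differentiate the same expansion: the leading contribution to $|\partial_t u|^2 + |\partial_r u|^2$ near $r \approx T$ is $2|\partial_s \zeta(T-r,\omega)|^2/r^2$ with remainder $O(r^{-3})$, while the angular piece $|\nabla_\omega u|^2/r^2$ is of order $r^{-4}$. Changing variables via $s = T - r$ on the slice $\{t = T\}$ and using that $\partial_s \zeta = \eta$ has compact support in $s$ (so the shell carrying the bulk of the energy has bounded width) gives
\[
\lim_{T\to\infty} E(T) = \int_{\mathbb{R}\times\mathbb{S}^2} |\partial_s \zeta(s,\omega)|^2\,ds\,d\omega = \|\eta\|_{L^2(\mathbb{R}\times\mathbb{S}^2)}^2 < \infty.
\]
Therefore $E(0) = \|\eta\|_{L^2}^2$, which is exactly the statement that $(u,\partial_t u)(0,\cdot)$ is bounded in $\dot{H}^1(\mathbb{R}^3) \times L^2(\mathbb{R}^3)$.

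The principal obstacle is uniform control of the remainder terms in the null-infinity expansion, which is needed both to discard the boundary-at-spatial-infinity term in the energy identity and to justify the limit computation above. Both points are handled by exploiting the smoothness of $v(\rho,s,\omega)$ up to $\rho = 0$ guaranteed by Lemma~\ref{dirac: smooth profiles}: after the conformal rescaling, all derivatives of $u$ admit uniform bounds in the far field, translating into polynomial decay estimates for $u$, $\partial_t u$, $\partial_r u$ and $\nabla_\omega u$ of precisely the orders used above. An alternative (and cleaner, though longer) route would bypass the limit computation entirely by an approximation argument: truncate $\eta$ to obtain compactly supported radiation profiles $\zeta_n$ whose associated receding waves have compactly supported initial data, apply the known $\dot H^1 \times L^2$–$L^2$ isometry of the free radiation field map to each $\zeta_n$, and pass to the limit using lower semicontinuity.
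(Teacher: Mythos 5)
A preliminary remark: the paper offers no proof of this lemma at all --- it is quoted verbatim from Friedlander (Lemma 3.12, p.~502 of \cite{MR583989}) --- so your attempt can only be judged against what a complete argument requires, and there it has a genuine gap at exactly the delicate point. Your main computation (energy identity on $\{0\le t\le T,\ t-r\le s_1\}$, no flux through the null face where $u\equiv 0$, and $\lim_{T\to\infty}E(T)=\|\eta\|_{L^2(\mathbb{R}\times\mathbb{S}^2)}^2$ from the $1/r$ expansion) does reproduce the correct identity, with the right constant relative to the isometry quoted in the introduction. But two steps are unjustified. First, the expansion $u=\zeta(t-r,\omega)/r+O(r^{-2})$ cannot be ``supplied by Lemma~\ref{dirac: smooth profiles}'': that lemma applies to solutions with compactly supported Cauchy data, and the receding wave generically does \emph{not} have compactly supported data (this is precisely the remark preceding Lemma~\ref{dirac: surjection}); the correct source is item 4) of Lemma~\ref{dirac: existence of receding waves}, i.e.\ smoothness of $v(\rho,s,\omega)$ up to $\rho=0$. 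Second, and more seriously, smoothness of $v$ on $[0,\infty)\times\mathbb{R}\times\mathbb{S}^{2}$ only gives bounds that are uniform for $s$ in \emph{compact} sets, whereas every place where you actually need the remainder estimates --- finiteness of $E(0)$, the vanishing of the flux through the truncating cylinder $\{r=R\}$ needed to run the divergence theorem on your unbounded domain, and the tail region $r\ge T-s_0$ in the evaluation of $E(T)$ --- lies in the regime $s=t-r\to-\infty$, i.e.\ near spacelike infinity, where $\zeta(s,\omega)\to-\int_{\mathbb{R}}\eta(\tau,\omega)\,d\tau$ is a generically nonzero constant. This is exactly why the initial data fails to be compactly supported and why finiteness of the energy is nontrivial; asserting that the conformal smoothness ``translates into'' uniform polynomial decay of $u,\partial_t u,\partial_r u,\nabla_\omega u$ in that region assumes what must be proved. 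A complete proof has to establish uniform control of $u$ and its first derivatives on $\{t-r\le s_0\}$ (this is where Friedlander's Lemma 3.12 does its real work), and your sketch, while flagging this as ``the principal obstacle,'' supplies no argument for it.

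Your fallback approximation route does not close the gap either. Truncating $\eta$ does not make $\zeta_n=-\int_s^{+\infty}\eta_n\,d\tau$ compactly supported in $s$ unless $\int_{\mathbb{R}}\eta_n(\tau,\omega)\,d\tau=0$ for every $\omega$, and even a radiation profile with compact support in $s$ need not arise from compactly supported initial data (again the remark before Lemma~\ref{dirac: surjection}; this is a Radon-transform support issue). Hence the ``known $\dot H^1\times L^2$--$L^2$ isometry for compactly supported data'' cannot simply be applied to the $\zeta_n$: knowing that a prescribed profile lies in the range of the radiation-field map on finite-energy data is essentially the content of the lemma you are trying to prove.
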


We also need Dirac version of Lemma \ref{dirac: smooth profiles}.
\begin{lemma}\label{dirac: smooth profiles for dirac}
Let $\phi(t,x)$ be solution of the Eq \eqref{dirac: dirac equation} with vanishing source term $\Phi\equiv 0$. Then there exits $\xi\in C^{\infty}([0,\infty)\times\mathbb{R}\times\mathbb{S}^{2},\mathbb{C}^{4})$ such that:
\begin{align*}
\xi(\rho,s,\omega)=\frac{1}{\rho}u(\frac{1}{\rho}\omega,\frac{1}{\rho}+\tau).
\end{align*}
By strong Hygens principle, $\xi(\rho,s,\omega)$ has compact support in the second variable.
\end{lemma}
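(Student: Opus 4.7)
The plan is to reduce the Dirac problem to the wave equation case component-by-component, using the fact that iterating the Dirac operator yields the d'Alembertian, and then invoke Lemma \ref{dirac: smooth profiles}.

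\textbf{Step 1: Componentwise wave equation.} Starting from $\mathcal{D}\phi = -i\gamma^{\mu}\partial_{\mu}\phi = 0$, I would apply $\gamma^{\nu}\partial_{\nu}$ again and use the Clifford algebra relation $\{\gamma^{\mu},\gamma^{\nu}\} = -2\eta^{\mu\nu}I$ together with the signature $(-,+,+,+)$. This gives
\begin{align*}
\gamma^{\nu}\partial_{\nu}(\gamma^{\mu}\partial_{\mu}\phi) = \tfrac{1}{2}\{\gamma^{\nu},\gamma^{\mu}\}\partial_{\nu}\partial_{\mu}\phi = -\eta^{\mu\nu}\partial_{\mu}\partial_{\nu}\phi = \Box\phi = 0,
\end{align*}
so each of the four components $\phi_{I}$, $I=1,2,3,4$, is a free solution of the scalar wave equation on $\mathbb{R}^{1+3}$.

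\textbf{Step 2: Data reduction.} The initial value $\phi(0,\cdot) = \phi_{0} \in C_{0}^{\infty}(\mathbb{R}^{3},\mathbb{C}^{4})$ is already compactly supported. The time derivative $\partial_{t}\phi(0,\cdot)$ is recovered algebraically from the equation: multiplying $\mathcal{D}\phi = 0$ by $\gamma^{0}$ yields $\partial_{t}\phi_{0} = -\gamma^{0}\gamma^{i}\partial_{i}\phi_{0}$, which is also in $C_{0}^{\infty}(\mathbb{R}^{3},\mathbb{C}^{4})$. So the Cauchy data for the wave equation satisfied by each $\phi_{I}$ are smooth and compactly supported.

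\textbf{Step 3: Application of the wave lemma.} For each component $I$, Lemma \ref{dirac: smooth profiles} produces a function $v_{I}(\rho,s,\omega) \in C^{\infty}([0,\infty)\times\mathbb{R}\times\mathbb{S}^{2})$ which extends the rescaling $\frac{1}{\rho}\phi_{I}(\frac{1}{\rho}+s,\frac{1}{\rho}\omega)$ smoothly to $\rho=0$, and is compactly supported in the $s$-variable by the strong Huygens principle. Assembling the four components into
\begin{align*}
\xi(\rho,s,\omega) := \bigl(v_{1}(\rho,s,\omega),\,v_{2}(\rho,s,\omega),\,v_{3}(\rho,s,\omega),\,v_{4}(\rho,s,\omega)\bigr),
\end{align*}
gives the desired $\xi \in C^{\infty}([0,\infty)\times\mathbb{R}\times\mathbb{S}^{2},\mathbb{C}^{4})$, with compact support in $s$ inherited from each component.

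The main (modest) obstacle is merely bookkeeping: verifying that the componentwise application of the scalar result is consistent with the vector-valued rescaling in the Dirac statement, and checking that the Cauchy data for $\Box\phi_{I} = 0$ derived from the Dirac equation actually lie in $C_{0}^{\infty}$. Both are routine since the Dirac equation algebraically determines $\partial_{t}\phi$ from the spatial derivatives of $\phi$, preserving smoothness and compact support. No extra input beyond the Clifford relations and Lemma \ref{dirac: smooth profiles} is needed.
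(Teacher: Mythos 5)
Your proposal is correct and follows essentially the same route as the paper: square the Dirac operator to see that each component of $\phi$ solves the free wave equation (the paper writes this as $\Box=-\mathcal{D}^{2}$), then apply Lemma \ref{dirac: smooth profiles} componentwise and assemble the resulting profiles into $\xi$. Your extra bookkeeping (smooth, compactly supported Cauchy data via $\partial_{t}\phi_{0}=-\gamma^{0}\gamma^{i}\partial_{i}\phi_{0}$) is fine, and the harmless sign slip in Step 1 (with this signature one gets $\mathcal{D}^{2}=-\Box$) does not affect the conclusion $\Box\phi=0$.
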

\begin{proof}
Note that $\Box=-\mathcal{D}^{2}$ and $\mathcal{D}\phi=0$, we have $\Box \phi=0$. Apply Lemma \ref{dirac: smooth profiles} to components of $\phi(t,x)$, we can obtain $\xi\in C^{\infty}([0,\infty)\times\mathbb{R}\times\mathbb{S}^{2},\mathbb{C}^{4})$ such that:
\begin{align*}
\xi(\rho,s,\omega)=\frac{1}{\rho}u(\frac{1}{\rho}\omega,\frac{1}{\rho}+\tau).
\end{align*}
\end{proof}

We also need Dirac version of Lemma \ref{dirac: existence of receding waves}.
\begin{lemma}\label{dirac: existence of receding dirac waves}
Assuming $\psi(s,\omega)\in C^{\infty}(\mathbb{R}\times\mathbb{S}^{2},\mathbb{C}^{4})$ and $P(-\omega)\psi(s,\omega)=0$. If there exists $s_{1}$ such that $\psi\equiv 0$ for $s>s_{1}$. Then there is a unique $\phi(t,x)\in C^{\infty}(\mathbb{R}^{1+3},\mathbb{C}^{4})$ satisfying:
\begin{itemize}
\item[1)] $\phi(t,x):\mathbb{R}^{1+3}\rightarrow \mathbb{C}^{4}$ be a solution of free Dirac equation, i,e, $\mathcal{D} u=0$,
\item[2)] $\phi(t,x)=0, t-r\geq s_{1}$,
\item[3)] $\lim_{r\rightarrow+\infty}r\phi(r+s,r\omega)=\psi(s,\omega)$,
\item[4)] $\xi(\rho,s,\omega)=\frac{1}{\rho}\phi(\frac{1}{\rho}\omega,\frac{1}{\rho}+\tau)\in C^{\infty}([0,\infty)\times\mathbb{R}\times\mathbb{S}^{2},\mathbb{C}^{4})$.
\end{itemize}
We call such $\phi(t,x)$ a receding spinor wave.
\end{lemma}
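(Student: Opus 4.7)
The plan is to reduce the Dirac receding-wave problem to the scalar wave case already settled in Lemma \ref{dirac: existence of receding waves}, exploiting the identity $\mathcal{D}^{2}=-\Box$. Specifically, I would look for $\phi$ in the form $\phi=\mathcal{D}u$ where $u:\mathbb{R}^{1+3}\to\mathbb{C}^{4}$ is a receding solution of the scalar wave equation with a carefully chosen radiation profile $\tilde\psi$. Then $\mathcal{D}\phi=\mathcal{D}^{2}u=-\Box u=0$ holds for free, and properties 2) and 4) follow from the corresponding properties of $u$, leaving only the task of arranging that $\lim_{r\to\infty}r\phi(r+s,r\omega)=\psi(s,\omega)$.

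\textbf{Computation of the radiation field of $\mathcal{D}u$.} If $u\sim \tilde\psi(t-r,\omega)/r$ to leading order, then $\partial_{t}u\sim\partial_{s}\tilde\psi/r$, $\partial_{r}u\sim-\partial_{s}\tilde\psi/r$, and the tangential derivatives contribute at order $r^{-2}$. Using $\partial_{i}=\omega_{i}\partial_{r}+\text{tangential}$, the Dirac operator gives
\[
\mathcal{D}u=\gamma^{0}\partial_{t}u+\gamma^{i}\partial_{i}u\sim\frac{(\gamma^{0}-\gamma^{i}\omega_{i})\,\partial_{s}\tilde\psi}{r}=\frac{2\gamma^{0}P(-\omega)\,\partial_{s}\tilde\psi}{r}.
\]
Observe that this leading coefficient automatically lies in $E^{+}$, since $P(-\omega)\gamma^{0}P(-\omega)=\gamma^{0}P(\omega)P(-\omega)=0$ by Lemma \ref{dirac: properties of projection operators}. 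Hence the radiation field of $\phi=\mathcal{D}u$ is $2\gamma^{0}P(-\omega)\partial_{s}\tilde\psi$, and the algebraic constraint $P(-\omega)\psi=0$ is consistent with the construction.

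\textbf{Inverting the map $\tilde\psi\mapsto 2\gamma^{0}P(-\omega)\partial_{s}\tilde\psi$.} Using the identities $P(\omega)\gamma^{0}=\gamma^{0}P(-\omega)$ and $\gamma^{0}P(-\omega)\gamma^{0}=P(\omega)$ (direct from $\{\gamma^{\mu},\gamma^{\nu}\}=-2\eta^{\mu\nu}I$), together with the hypothesis $\psi=P(\omega)\psi$, I would set
\[
\tilde\psi(s,\omega):=-\tfrac{1}{2}\gamma^{0}\int_{s}^{+\infty}\psi(\tau,\omega)\,d\tau.
\]
The integral is well defined because $\psi(s,\omega)=0$ for $s>s_{1}$; moreover $\tilde\psi\in C^{\infty}(\mathbb{R}\times\mathbb{S}^{2},\mathbb{C}^{4})$ and $\tilde\psi\equiv 0$ for $s>s_{1}$. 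A one-line computation gives $2\gamma^{0}P(-\omega)\partial_{s}\tilde\psi=\gamma^{0}P(-\omega)\gamma^{0}\psi=P(\omega)\psi=\psi$, as required.

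\textbf{Verification of 1)–4) and uniqueness.} Apply Lemma \ref{dirac: existence of receding waves} componentwise to $\tilde\psi$ to obtain $u\in C^{\infty}(\mathbb{R}^{1+3},\mathbb{C}^{4})$ with $\Box u=0$, $u\equiv 0$ for $t-r\geq s_{1}$, radiation field $\tilde\psi$, and a smooth conformal profile $v(\rho,s,\omega)$. Setting $\phi:=\mathcal{D}u$, properties 1) and 2) are immediate. For 4), I would rewrite $\mathcal{D}u$ in the $(\rho,s,\omega)$ coordinates via the change of variables $t=\rho^{-1}+s$, $x=\rho^{-1}\omega$ and compute
\[
\rho^{-1}\mathcal{D}u=2\gamma^{0}P(-\omega)\,\partial_{s}v+\rho\cdot\Xi(\rho,s,\omega),
\]
where $\Xi$ is a smooth expression in $v$, $\partial_{\rho}v$, $\partial_{s}v$, and $\partial_{\omega^{i}}v$ with coefficients smooth in $\omega$; this exhibits $\xi:=\rho^{-1}\phi$ as an element of $C^{\infty}([0,\infty)\times\mathbb{R}\times\mathbb{S}^{2},\mathbb{C}^{4})$, and property 3) follows from $\xi(0,s,\omega)=2\gamma^{0}P(-\omega)\partial_{s}v(0,s,\omega)=2\gamma^{0}P(-\omega)\partial_{s}\tilde\psi=\psi$. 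For uniqueness, the difference of two candidate solutions is a smooth free Dirac solution supported in $\{t-r\leq s_{1}\}$ with vanishing radiation field; combining the finite-energy property of Lemma \ref{dirac: finite energy of receding waves} applied to the auxiliary wave problem with the $L^{2}$-isomorphism between initial data and radiation field (to be established in the linear isomorphism theorem) forces it to vanish.

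\textbf{Main obstacle.} The chief technical point is making the formal asymptotic expansion of $\mathcal{D}u$ rigorous near null infinity and verifying smoothness of $\xi$ up to $\rho=0$. This is not obvious from the mere existence of $v$, but follows once one tracks carefully how $\partial_{t}$, $\partial_{r}$, and the tangential derivatives act on $u=\rho v$ and observes that every apparently singular $\rho^{-1}$ cancels against a factor $\rho$ arising from the explicit formulas for $\partial_{t}$, $\partial_{r}$, and the angular derivatives in $(\rho,s,\omega)$ coordinates. Uniqueness is the other delicate item, requiring the $L^{2}$ linear theory rather than the receding construction alone.
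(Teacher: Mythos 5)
Your existence argument follows the same route as the paper — reduce to the scalar receding waves of Lemma \ref{dirac: existence of receding waves} via $\Box=-\mathcal{D}^{2}$ and set $\phi=\mathcal{D}u$ — and you execute it more carefully than the paper does: the paper applies the scalar lemma to the components of $\psi$ itself and takes $\phi:=-\mathcal{D}u$, without checking property 3); since the radiation field of $\mathcal{D}u$ is $2\gamma^{0}P(-\omega)\partial_{s}(\text{profile of }u)$ and $P(-\omega)\psi=0$, that choice actually produces a vanishing limit, so the auxiliary profile must be adjusted. Your explicit choice $\tilde\psi=-\tfrac12\gamma^{0}\int_{s}^{\infty}\psi\,d\tau$, together with the identity $\gamma^{0}P(-\omega)\gamma^{0}=P(\omega)$ and $\psi=P(\omega)\psi$, is exactly the needed correction, and your verification of properties 1), 2), 4) and of the limit 3) through the smooth conformal profile $v$ is sound.

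The genuine gap is in your uniqueness argument. You invoke ``the $L^{2}$-isomorphism between initial data and radiation field (to be established in the linear isomorphism theorem)'', but in the paper the surjectivity half of Theorem \ref{dirac: linear isomorphism theorem} (Lemma \ref{dirac: surjection}) is itself proved using the existence \emph{and uniqueness} of receding Dirac waves, i.e.\ the very statement at hand, so this is circular. Even if you only meant the energy identity of Lemma \ref{dirac: 0-order energy}, that identity is established for solutions with smooth compactly supported Cauchy data, whereas the difference of two candidate solutions satisfying 1)--4) need not have compactly supported (or, a priori, even finite-energy) data — Lemma \ref{dirac: finite energy of receding waves} applies to the wave constructed from a given profile, not to an arbitrary competitor. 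The paper's uniqueness step avoids all of this in one line: if $\phi_{1},\phi_{2}$ both satisfy 1)--4), then $\delta=\phi_{1}-\phi_{2}$ obeys $\Box\delta=-\mathcal{D}^{2}\delta=0$, vanishes for $t-r\geq s_{1}$, has $\lim_{r\to\infty}r\delta(r+s,r\omega)=0$, and has a smooth conformal profile, so each component satisfies conditions 1)--4) of Lemma \ref{dirac: existence of receding waves} with the zero profile, and the uniqueness clause of that scalar lemma forces $\delta\equiv0$. Replacing your final paragraph's uniqueness argument by this observation closes the gap.
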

\begin{proof}
To show the existence of receding Dirac wave, we apply Lemma \ref{dirac: existence of receding waves} to each components of $\psi(s,\omega)\in C^{\infty}(\mathbb{R}\times\mathbb{S}^{2},\mathbb{C}^{4})$. Then we can obtain a unique receding wave $u(t,x):\mathbb{R}^{1+3}\rightarrow \mathbb{C}^{4}$. Note that $\Box=-\mathcal{D}^{2}$. Therefore, $\phi(t,x):=-\mathcal{D}u(t,x)$ gives the desired receding Dirac wave. To show the uniqueness of receding Dirac wave, we assume there exists two receding Dirac waves $\phi_{1}(t,x)$ and $\phi_{2}(t,x)$. Then $\phi_{1}(t,x)-\phi_{2}(t,x)$ satisfies free wave equation $\Box(\phi_{1}-\phi_{2})=0$ and its components satisfy the conditions of receding waves of $\psi(s,\omega)\equiv 0\in C^{\infty}(\mathbb{R}\times\mathbb{S}^{2})$ in Lemma \ref{dirac: existence of receding waves}. By uniqueness property, we can conclude that $\phi_{1}\equiv\phi_{2}$.
\end{proof}

We also need Dirac version of Lemma \ref{dirac: finite energy of receding waves}.
\begin{lemma}\label{dirac: finite energy of receding dirac waves}
Take $\psi(s,\omega)\in C^{\infty}_{0}(\mathbb{R}\times\mathbb{S}^{2},\mathbb{C}^{4})$ and satisfying $P(-\omega)\psi(s,\omega)=0$, let $\phi(t,x)\in C^{\infty}(\mathbb{R}^{1+3},\mathbb{C}^{4})$ denote the unique receding Dirac wave constructed for $\psi(s,\omega)$ in previous Lemma \ref{dirac: existence of receding dirac waves}, then the receding dirac wave $\phi(t,x)$ satisfies:
$\phi(0,\cdot)$ is bounded in $L^{2}(\mathbb{R}^{3})$.
\end{lemma}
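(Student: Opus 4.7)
The plan is to re-exhibit the receding Dirac wave $\phi$ in the form $\phi = -\mathcal{D}u$ for a carefully chosen $\mathbb{C}^{4}$-valued wave $u$, and to harvest the $L^{2}$ bound from the scalar finite energy Lemma~\ref{dirac: finite energy of receding waves} applied componentwise to $u$. By the uniqueness clause of Lemma~\ref{dirac: existence of receding dirac waves}, it suffices to produce any receding Dirac wave with radiation field $\psi$; the wave $u$ will be designed so that exactly one factor of $\gamma^{0}$ and one $s$-antidifferentiation compensate the projection $P(-\omega)$ that appears at leading order whenever $\mathcal{D}$ hits a wave profile at null infinity.

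Concretely, I first set
\[
\chi(s,\omega) := \tfrac{1}{2}\gamma^{0}\int_{s}^{+\infty}\psi(\tau,\omega)\,d\tau.
\]
Because $\psi \in C_{0}^{\infty}(\mathbb{R}\times\mathbb{S}^{2},\mathbb{C}^{4})$ vanishes for $s$ beyond some $s_{1}$, the profile $\chi$ is smooth on $\mathbb{R}\times\mathbb{S}^{2}$, vanishes for $s > s_{1}$, and $\partial_{s}\chi = -\tfrac{1}{2}\gamma^{0}\psi$ lies in $C_{0}^{\infty}(\mathbb{R}\times\mathbb{S}^{2},\mathbb{C}^{4})$. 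Applying Lemma~\ref{dirac: existence of receding waves} componentwise to $\chi$ produces a receding wave $u \in C^{\infty}(\mathbb{R}^{1+3},\mathbb{C}^{4})$ with $\Box u = 0$, $u \equiv 0$ for $t - r \geq s_{1}$, and $\lim_{r\to+\infty} r\,u(r+s,r\omega) = \chi(s,\omega)$.

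Next I verify that $\tilde{\phi} := -\mathcal{D}u$ satisfies all four conditions of Lemma~\ref{dirac: existence of receding dirac waves} with radiation field $\psi$. The Dirac equation $\mathcal{D}\tilde{\phi} = -\mathcal{D}^{2}u = \Box u = 0$, the support condition, and the smoothness of the compactification $\xi_{\tilde\phi}$ on $[0,\infty)\times\mathbb{R}\times\mathbb{S}^{2}$ all follow from the corresponding properties of $u$ (for $\xi_{\tilde\phi}$, differentiate the representation $u(t,x) = \rho\,\xi_{u}(\rho,s,\omega)$ and multiply by $-1/\rho$). For the radiation field, expanding $u \sim \chi(t-r,\omega)/r$ near null infinity and using the identity $\gamma^{0} - \gamma^{i}\omega_{i} = 2\gamma^{0}P(-\omega)$ gives
\[
\lim_{r\to+\infty} r(-\mathcal{D}u)(r+s,r\omega) = -2\gamma^{0}P(-\omega)\,\partial_{s}\chi(s,\omega) = \gamma^{0}P(-\omega)\gamma^{0}\psi(s,\omega).
\]
The intertwining $P(-\omega)\gamma^{0} = \gamma^{0}P(\omega)$ (immediate from $\gamma^{0}\gamma^{i} + \gamma^{i}\gamma^{0} = 0$ and $(\gamma^{0})^{2} = I$), together with $P(-\omega)\psi = 0$ and $I = P(\omega) + P(-\omega)$ from Lemma~\ref{dirac: properties of projection operators}, reduces this limit to $P(\omega)\psi = \psi$. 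Uniqueness in the previous lemma then gives $\phi = \tilde\phi = -\mathcal{D}u$.

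Finally, I invoke Lemma~\ref{dirac: finite energy of receding waves} componentwise on $u$, with $\eta = -\tfrac{1}{2}\gamma^{0}\psi \in C_{0}^{\infty}$ and antiderivative $\zeta = -\int_{s}^{+\infty}\eta\,d\tau = \chi$. This yields $(u,\partial_{t}u)(0,\cdot) \in \dot{H}^{1}(\mathbb{R}^{3})\times L^{2}(\mathbb{R}^{3})$, hence $\partial_{\mu}u(0,\cdot) \in L^{2}(\mathbb{R}^{3})$ for every $\mu = 0,1,2,3$. Since $\phi(0,\cdot) = -\gamma^{\mu}\partial_{\mu}u(0,\cdot)$ is a fixed linear combination of $L^{2}$ functions with constant matrix coefficients, the triangle inequality delivers $\phi(0,\cdot) \in L^{2}(\mathbb{R}^{3})$. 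The genuine obstacle is the radiation-field identity in the third paragraph: the naive choice of taking the radiation field of $u$ to be $\psi$ itself would make the $1/r$ coefficient of $\mathcal{D}u$ equal to $2\gamma^{0}P(-\omega)\partial_{s}\psi$, which vanishes by the null condition, so that the radiation field of $-\mathcal{D}u$ would be $0$ rather than $\psi$. The $\gamma^{0}$-twisted antiderivative $\chi$ is engineered precisely so that $\gamma^{0}P(-\omega)\gamma^{0} = P(\omega)$ rescues the limit back to $\psi$, while $\partial_{s}\chi$ remains in $C_{0}^{\infty}$, which is exactly the hypothesis that Lemma~\ref{dirac: finite energy of receding waves} requires.
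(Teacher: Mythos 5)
Your proof is correct, and its final step is exactly the paper's: both arguments write the receding Dirac wave as $\phi=-\mathcal{D}u$ for a $\mathbb{C}^{4}$-valued receding wave $u$ and then bound $\|\phi(0,\cdot)\|_{L^{2}}=\|\gamma^{0}\partial_{t}u(0,\cdot)+\gamma^{i}\partial_{i}u(0,\cdot)\|_{L^{2}}\lesssim\|(u,\partial_{t}u)(0,\cdot)\|_{\dot{H}^{1}\times L^{2}}$ by applying Lemma \ref{dirac: finite energy of receding waves} componentwise. The genuine difference is the choice of $u$. The paper simply recalls the $u$ from the proof of Lemma \ref{dirac: existence of receding dirac waves}, namely the receding wave whose own profile is $\psi$ itself, and does not check that $-\mathcal{D}u$ has radiation field $\psi$; as you observe, with that choice the $1/r$-coefficient of $-\mathcal{D}u$ is $-2\gamma^{0}P(-\omega)\partial_{s}\psi$, which vanishes under the constraint $P(-\omega)\psi=0$, so $\lim_{r\to+\infty}r(-\mathcal{D}u)(r+s,r\omega)=0$ rather than $\psi$. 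Your twisted antiderivative $\chi=\tfrac{1}{2}\gamma^{0}\int_{s}^{+\infty}\psi\,d\tau$, combined with $P(-\omega)\gamma^{0}=\gamma^{0}P(\omega)$, repairs this: the resulting $-\mathcal{D}u$ has radiation field $P(\omega)\psi=\psi$, so the uniqueness clause legitimately identifies it with $\phi$, and $\chi$ is precisely of the form $\zeta=-\int_{s}^{+\infty}\eta\,d\tau$ with $\eta=-\tfrac{1}{2}\gamma^{0}\psi\in C_{0}^{\infty}$ demanded by Lemma \ref{dirac: finite energy of receding waves}. In short, the paper's version buys brevity by citing the earlier construction wholesale, while your version is longer but makes explicit (indeed corrects) the verification that $\phi=-\mathcal{D}u$ really is the receding Dirac wave attached to $\psi$, which the one-line proofs of Lemmas \ref{dirac: existence of receding dirac waves} and \ref{dirac: finite energy of receding dirac waves} gloss over; the energy estimate itself is identical in both routes.
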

\begin{proof}
Recall that in the proof of previous Lemma \ref{dirac: existence of receding dirac waves}, the unique receding dirac receding wave is given by $\phi(t,x):=-\mathcal{D}u(t,x)$, where $u(t,x)$ is the receding wave constructed for components of $\psi(s,\omega)\in C^{\infty}_{0}(\mathbb{R}\times\mathbb{S}^{2},\mathbb{C}^{4})$ in Lemma \ref{dirac: existence of receding waves}. Therefore, $\|\phi(0,\cdot)\|_{L^2}=\|\gamma^{0}\partial_{t}u(0,\cdot)+\gamma^{i}\partial_{i}u(0,\cdot)\|_{L^{2}}\lesssim \|(u,\partial_{t}u)(0,\cdot)\|_{\dot{H}^{1}\times L^{2}}$ is bounded.
\end{proof}

\section{Symmetry of Dirac Spinors}
In this section, we will explore the symmetries of the Dirac spinor $\phi(t,x)$ in $\mathbb{R}^{1+3}$ and extend these symmetries to the corresponding radiation fields at null infinity.

\subsection{Spinor Lie derivatives on the Minkowski space $\mathbb{R}^{1+3}$}
{\bf Lie derivatives along the Lorentz vector field:} Recall the Lorentz transformation of the Minkowski space time
which are defined by the real $4\times4$ matrices
\begin{align*}
(M^{\rho\sigma})^{\mu}{}_{\nu}:=\eta^{\rho\mu}\delta^{\sigma}_{\nu}-\eta^{\sigma\mu}\delta^{\rho}_{\nu},
\end{align*}
with Lie bracket
\begin{align*}
[M^{\mu\nu},M^{\rho\sigma}]=M^{\mu\rho}\eta^{\nu\sigma}+M^{\nu\sigma}\eta^{\mu\rho}-M^{\mu\sigma}\eta^{\nu\rho}-M^{\nu\rho}\eta^{\mu\sigma}.
\end{align*}
As we know, $\big\{(M^{\rho\sigma})\big\}$ is a basis of the real Lie algebra $\mathfrak{so}(1,3)$.
\medskip

In order to study the Lorentz transformation of the spinor field on the Minkowski space time $\mathbb{R}^{1+3}$. We consider the complex $4\times4$ matrices 
\begin{align*}
S^{\rho\sigma}:=\frac{1}{4}[\gamma^{\rho},\gamma^{\sigma}]=\frac{1}{2}\gamma^{\rho}\gamma^{\sigma}+\frac{1}{2}\eta^{\rho\sigma}I,
\end{align*}
with Lie bracket
\begin{align*}
[S^{\mu\nu},S^{\rho\sigma}]=S^{\mu\rho}\eta^{\nu\sigma}+S^{\nu\sigma}\eta^{\mu\rho}-S^{\mu\sigma}\eta^{\nu\rho}-S^{\nu\rho}\eta^{\mu\sigma},
\end{align*}
these matrices $\big\{(S^{\rho\sigma})\big\}$ is a basis of real Lie algebra $\mathfrak{sl}(2,\mathbb{C})$.
\medskip

There is a Lie algebra isomorphism from $\mathfrak{so}(1,3)\mapsto\mathfrak{sl}(2,\mathbb{C})$ from the map $S^{\rho\sigma}\mapsto M^{\rho\sigma}$.  We substitute $t$ as $x^0$ and let $x$ be the four vector $(x^0, x^1, x^2, x^3)^{T}\in\mathbb{R}^{1+3}$. 
Then the Lorentz transformation of the spinor field $\phi(x)$ can be expressed as the formula 
\begin{align*}
\phi^{\alpha}(x)\mapsto \tilde{\phi}^{\alpha}(x):=S(\Lambda)^{\alpha}{}_{\beta}\phi^{\beta}\big(\Lambda^{-1}(x)\big),
\end{align*}
where $\Lambda$ and $S(\Lambda)$ are 
\begin{align*}
\Lambda=exp(\frac{1}{2}A_{\rho\sigma}M^{\rho\sigma}),\ \ S(\Lambda)=exp(\frac{1}{2}A_{\rho\sigma}S^{\rho\sigma}),
\end{align*}
with anti-symmetric coefficients $A_{\rho\sigma}$ satisfying $A_{\rho\sigma}=-A_{\rho\sigma}$. 
Then the Lie derivative of the spinor field $\phi(t,x)$ along the Lorentz vector field $\Omega^{\mu\nu}$ in $\mathbb{R}^{1+3}$ is given by
\begin{align*}
\mathcal{L}_{M^{\rho\sigma}}\phi(x):&=\frac{d}{d\epsilon}\Big(exp(\epsilon S^{\rho\sigma})\phi\big(exp(-\epsilon M^{\rho\sigma})(t, x)\big)\Big)\Big|_{\epsilon=0},\\
&=[(x^{\rho}\partial^{\sigma}-x^{\sigma}\partial^{\rho})I+S^{\rho\sigma}]\phi(x),\\
&=[(x^{\rho}\partial^{\sigma}-x^{\sigma}\partial^{\rho})I+\frac{1}{2}\gamma^{\rho}\gamma^{\sigma}]\phi(x).
\end{align*}
In particular, the Lie derivatives along the Lorentz rotation $\Omega^{ij}$ and Lorentz boost vector field $\Omega^{0i}$ are 
\begin{align*}
\mathcal{L}_{M^{ij}}=(x^{j}\partial_{i}-x^{i}\partial_{j})+\frac{1}{2}\gamma^{i}\gamma^{j}=-\Omega^{ij}+\frac{1}{2}\gamma^{i}\gamma^{j},
\end{align*}
\begin{align*}
\mathcal{L}_{M^{0i}}=-(x^{i}\partial_{t}+t\partial_{i})+\frac{1}{2}\gamma^{0}\gamma^{i}=-\Omega^{0i}+\frac{1}{2}\gamma^{0}\gamma^{i}.
\end{align*}
{\bf Lie derivatives along the translation vector field:} $\mathbb{R}^{1+3}$ gives a natural action on Dirac spinors by translation as follows,
\begin{align*}
\phi(x)\mapsto \tilde{\phi}(x):=\phi(x+x_{0}),
\end{align*}
where $x_{0}\in\mathbb{R}^{1+3}$.
The action is compatible with Dirac operator $\mathcal{D}:=\gamma^{\mu}\partial_{\mu}$ which means
\begin{align*}
\mathcal{D}\phi=\Phi\implies \mathcal{D}\tilde{\phi}=\tilde{\Phi}.
\end{align*}
Therefore, we can calculate the associated spinor Lie algebra as follows,
\begin{align*}
\mathcal{L}_{\frac{\partial}{\partial x^\mu}}\phi(x):=\frac{d}{d\epsilon}\phi\big(x+\epsilon e_{\mu}\big)|_{\epsilon=0}=\partial_{\mu}\phi(x).
\end{align*}
This means that 
\begin{align*}
\mathcal{L}_{\frac{\partial}{\partial x^0}}=\partial_{t}, \quad \mathcal{L}_{\frac{\partial}{\partial x^i}}=\partial_{i}, \  i\in\{1,2,3\}.
\end{align*}
{\bf Lie derivatives along the scaling vector field:}\ 
$\mathbb{R}$ gives a natural action on Dirac spinors by scaling as follows,
\begin{align*}
\phi(x)\mapsto \tilde{\phi}(x):=\phi(exp(-\lambda)x),
\end{align*}
where $\lambda\in\mathbb{R}$.
The action is compatible with Dirac operator $\mathcal{D}:=\gamma^{\mu}\partial_{\mu}$ which means
\begin{align*}
\mathcal{D}\phi=\Phi\implies \mathcal{D}\tilde{\phi}=\tilde{\Phi},
\end{align*}
Therefore, we can calculate the associated spinor Lie algebra as follows,
\begin{align*}
\mathcal{L}_{\frac{\partial}{\partial \lambda}}\phi(x):=\frac{d}{d\epsilon}\phi\big(exp(-\epsilon)x\big)|_{\epsilon=0}=-(x^{\mu}\partial_{\mu})\phi(x).
\end{align*}
Note that 
\begin{align*}
\mathcal{L}_{\frac{\partial}{\partial \lambda}}=-S=-(t\partial_{t}+r\partial_{r}).
\end{align*}

\subsection{Spinor Lie derivatives at null infinity}
In this subsection, we want to extend the symmetry consideration to null infinity.
Let $\phi(t,x)$ be the solution of Eq. \eqref{dirac: dirac equation} with vanishing source term $\Phi\equiv 0$. Recall that the forward radiation field of $\phi(t,x)$ is defined to be the following limit along all the outgoing light rays through the central axis $\{x=0\}$,
\begin{align*}
\mathcal{F}_{Dirac}^{+}(\phi_{0},0)(s,\omega)=\lim_{r\rightarrow+\infty}r\phi(s+r,r\omega).
\end{align*}

If $l^{+}_{t_{0}}$ is an outgoing light ray through the central axis $\{x=0\}$, $\Lambda$ is a time orientation preserving Lorentz transformation, then $\Lambda l^{+}_{t_{0}}$ is still an outgoing light ray but might not through the central axis $\{x=0\}$. Therefore, we need to know the limit of $|x|\phi(t,x)$ along any outgoing light rays. 
\begin{lemma}\label{dirac: light rays}
Let $\phi(t,x)$ be the solution of Eq. \eqref{dirac: dirac equation} with vanishing source term $\Phi\equiv 0$. $\{(t_{0}+\tau,x_{0}+\tau \omega_{0}), \tau\geq 0\}$ is a general outgoing light ray starting from $(t_{0},x_{0})$ with direction $\omega_{0}$. We have,
\begin{align*}
\lim_{\tau\rightarrow+\infty}|x_{0}+\tau \omega_{0}|\phi(t_{0}+\tau,x_{0}+\tau \omega_{0})=\mathcal{F}_{Dirac}^{+}(\phi_{0},0)(t_{0}-\left\langle x_{0},\omega_{0} \right\rangle,\omega_{0}).
\end{align*}
\end{lemma}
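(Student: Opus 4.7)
The plan is to reduce the statement to the continuity of the conformally compactified profile guaranteed by Lemma \ref{dirac: smooth profiles for dirac}. Introduce the new coordinates $\rho=1/r$, $s=t-r$, $\omega=x/|x|$ (valid away from the axis), so that Lemma \ref{dirac: smooth profiles for dirac} produces $\xi \in C^\infty([0,\infty)\times \mathbb{R}\times \mathbb{S}^2,\mathbb{C}^4)$ with the identity $r\phi(t,x)=\xi(\rho,s,\omega)$ and, in particular, the boundary value $\xi(0,s,\omega)=\mathcal{F}^+_{Dirac}(\phi_0,0)(s,\omega)$. The point is that continuity of $\xi$ up to $\rho=0$ means we may evaluate the limit along \emph{any} sequence of spacetime points with $\rho\to 0$, not only along rays through the origin.

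Next I would parametrize the general outgoing ray in these coordinates. Set
\begin{align*}
r_\tau := |x_0 + \tau \omega_0|,\qquad \omega_\tau := \frac{x_0+\tau \omega_0}{r_\tau},\qquad s_\tau := (t_0+\tau)-r_\tau,\qquad \rho_\tau := 1/r_\tau,
\end{align*}
so that by definition $r_\tau \phi(t_0+\tau, x_0+\tau \omega_0)=\xi(\rho_\tau, s_\tau,\omega_\tau)$ for $\tau$ large enough that $r_\tau>0$.

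The main calculation is an asymptotic expansion of $(\rho_\tau, s_\tau, \omega_\tau)$ as $\tau \to +\infty$. Expanding the square root via
\[
r_\tau = \tau\sqrt{1+\tfrac{2\langle x_0,\omega_0\rangle}{\tau}+\tfrac{|x_0|^2}{\tau^2}}=\tau+\langle x_0,\omega_0\rangle+O(\tau^{-1}),
\]
yields $\rho_\tau\to 0$, $s_\tau = t_0-\langle x_0,\omega_0\rangle+O(\tau^{-1})$, and $\omega_\tau=\omega_0+O(\tau^{-1})$. These are the three data we need. This step is essentially routine binomial expansion; the only subtlety is to note that $r_\tau>0$ for all large $\tau$ regardless of $x_0$, so the change of coordinates is valid along the tail of the ray.

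Finally, by continuity of $\xi$ on $[0,\infty)\times \mathbb{R}\times \mathbb{S}^2$ the displayed convergences give
\begin{align*}
\lim_{\tau\to+\infty} r_\tau\,\phi(t_0+\tau, x_0+\tau \omega_0) = \xi\bigl(0,\, t_0-\langle x_0,\omega_0\rangle,\, \omega_0\bigr)=\mathcal{F}^+_{Dirac}(\phi_0,0)\bigl(t_0-\langle x_0,\omega_0\rangle,\omega_0\bigr),
\end{align*}
which is the claim. The only potential obstacle is verifying that the profile of Lemma \ref{dirac: smooth profiles for dirac} really extends continuously to $\rho=0$ uniformly in the other two variables on compact sets, but this is already built into the smoothness statement, so no further work is needed.
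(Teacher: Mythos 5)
Your proposal is correct and follows essentially the same route as the paper: both invoke the smooth compactified profile $\xi$ from Lemma \ref{dirac: smooth profiles for dirac}, rewrite $r\phi$ along the general ray as $\xi(\rho_\tau,s_\tau,\omega_\tau)$, compute the elementary asymptotics $\rho_\tau\to 0$, $s_\tau\to t_0-\langle x_0,\omega_0\rangle$, $\omega_\tau\to\omega_0$, and conclude by continuity of $\xi$ up to $\rho=0$. Your explicit expansion $r_\tau=\tau+\langle x_0,\omega_0\rangle+O(\tau^{-1})$ just makes visible a step the paper leaves implicit.
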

\begin{proof}
In view of Lemma \ref{dirac: smooth profiles for dirac}, there exits $\xi\in C^{\infty}([0,\infty)\times\mathbb{R}\times\mathbb{S}^{2},\mathbb{C}^{4})$ such that:
\begin{align*}
\xi(\rho,s,\omega)=\frac{1}{\rho}u(\frac{1}{\rho}\omega,\frac{1}{\rho}+\tau).
\end{align*}
We can write the limit explicitly in terms of $\xi$,
\begin{align*}
&\lim_{\tau\rightarrow+\infty}|x_{0}+\tau \omega_{0}|\phi(t_{0}+\tau,x_{0}+\tau \omega_{0})\\
&=\lim_{\tau\rightarrow+\infty}\xi\Big(\frac{1}{|x_{0}+\tau\omega_{0}|},t_{0}+\tau-|x_{0}+\tau\omega_{0}|,\frac{x_{0}+\tau\omega_{0}}{|x_{0}+\tau\omega_{0}|}\Big),\\
&=\xi\big(0,t_{0}-\left\langle x_{0},\omega_{0} \right\rangle,\omega_{0}\big)=\mathcal{F}_{Dirac}^{+}(\phi_{0},0)\big(t_{0}-\left\langle x_{0},\omega_{0} \right\rangle,\omega_{0}\big).
\end{align*}
\end{proof}
{\bf Lie derivative along the Lorentz vector field: }
The Lie algebra $\mathfrak{so}(1,3)$ has a natural action on Dirac spinor as follows:
\begin{align*}
\phi^{\alpha}(x)\mapsto \tilde{\phi}^{\alpha}(x):=S(\Lambda)^{\alpha}{}_{\beta}\phi^{\beta}(\Lambda^{-1}x),
\end{align*}
where
\begin{align*}
\Lambda:=exp(\frac{1}{2}A_{\rho\sigma}M^{\rho\sigma}),\ S(\Lambda):=exp(\frac{1}{2}A_{\rho\sigma}S^{\rho\sigma}),
\end{align*}
$M_{\rho\sigma}$ are anti-symmetric coefficients satisfying $A_{\rho\sigma}=-A_{\sigma\rho}$. 
By Lemma \ref{dirac: light rays}, the forward radiation field of $\tilde{\phi}$ is given by
\begin{align*}
&\mathcal{F}_{Dirac}^{+}(\tilde{\phi})(s,\omega)=\lim_{r\rightarrow+\infty}r\tilde{\phi}(s+r,r\omega)=\lim_{r\rightarrow +\infty}S(\Lambda)r\phi\Big(\Lambda^{-1}\begin{pmatrix}
s+r\\ r\omega
\end{pmatrix}
\Big),\\
&=\frac{S(\Lambda)}{(\Lambda^{-1})^{0}{}_{\alpha}\omega^{\alpha}}\mathcal{F}_{Dirac}^{+}(\psi)\Big(\frac{s}{(\Lambda^{-1})^{0}{}_{\alpha}\omega^{\alpha}},\frac{(\Lambda^{-1})^{1}{}_{\alpha}\omega^{\alpha}}{(\Lambda^{-1})^{0}{}_{\alpha}\omega^{\alpha}},\frac{(\Lambda^{-1})^{2}{}_{\alpha}\omega^{\alpha}}{(\Lambda^{-1})^{0}{}_{\alpha}\omega^{\alpha}},\frac{(\Lambda^{-1})^{3}{}_{\alpha}\omega^{\alpha}}{(\Lambda^{-1})^{0}{}_{\alpha}\omega^{\alpha}}\Big),
\end{align*}
where $\omega^{0}=1$. To calculate the associated spinor Lie algebra at null infinity, we divide the problems into two cases.
If $(\rho,\sigma)=(i,j)$ with $i,j\in\{1,2,3\}$, we have
\begin{align*}
\mathcal{L}_{M^{ij}}\mathcal{F}_{Dirac}^{+}(\phi)(s,\omega)&=\frac{d}{d\epsilon}\Big|_{\epsilon=0}\Big(exp(\epsilon S^{ij})\mathcal{F}_{Dirac}^{+}(\phi)(s,exp(-\epsilon M^{ij})\omega)\Big),\\
&=\Big((\omega^{j}\partial_{\omega_{i}}-\omega^{i}\partial_{\omega_{j}})I+S^{\rho\sigma}\Big)\mathcal{F}_{Dirac}^{+}(\phi)(s,\omega),\\
&=\Big((\omega^{j}\partial_{\omega_{i}}-\omega^{i}\partial_{\omega_{j}})I+\frac{1}{2}\gamma^{i}\gamma^{j}\Big)\mathcal{F}_{Dirac}^{+}(\phi)(s,\omega).
\end{align*}
If $(\rho,\sigma)=(0,i)$ with $i\in\{1,2,3\}$, since $\omega^{k}\partial_{\omega^{k}}=0$ on the sphere, we have
\begin{align*}
\mathcal{L}_{M^{0i}}\mathcal{F}^{+}_{Dirac}(\phi)(x)&=\frac{d}{d\epsilon}\Big|_{\epsilon=0}\Big(\frac{exp(\epsilon S^{0i})}{(exp(-\epsilon S^{0i}))^{0}{}_{\alpha}\omega^{\alpha}}\mathcal{F}^{+}_{Dirac}(\phi)\big(\frac{s}{(exp(-\epsilon S^{0i}))^{0}{}_{\alpha}\omega^{\alpha}}, \\
&\frac{(exp(-\epsilon S^{0i}))^{1}{}_{\alpha}\omega^{\alpha}}{(exp(-\epsilon S^{0i}))^{0}{}_{\alpha}\omega^{\alpha}},\frac{(exp(-\epsilon S^{0i}))^{2}{}_{\alpha}\omega^{\alpha}}{(exp(-\epsilon S^{0i}))^{0}{}_{\alpha}\omega^{\alpha}},
\frac{(exp(-\epsilon S^{0i}))^{3}{}_{\alpha}\omega^{\alpha}}{(exp(-\epsilon S^{0i}))^{0}{}_{\alpha}\omega^{\alpha}}\big)\Big),\\
&=\Big(S^{0i}+(M^{0i})^{0}{}_{\alpha}\omega^{\alpha}(I+s\partial_{s}+\omega^{k}\partial_{\omega^{k}}),
\\
&-(M^{0i})^{k}{}_{\alpha}\partial_{\omega^{k}}\Big)\mathcal{F}^{+}_{Dirac}(\phi)(s,\omega),\\
&=\big(-\partial_{\omega^{i}}+\omega^{i}s\partial_{s}+\omega^{i}I+\frac{1}{2}\gamma^{0}\gamma^{i}\big)\mathcal{F}_{Dirac}^{+}(\phi)(s,\omega).
\end{align*}
{\bf Translation invariance: }
$\mathbb{R}^{1+3}$ gives a natural action on Dirac spinors by translation as follows,
\begin{align*}
\phi(t,x)\mapsto \tilde{\phi}(t,x):=\phi(t+t_{0},x+x_{0}),
\end{align*}
where $(t_{0},x_{0})\in\mathbb{R}^{1+3}$. Then the forward radiation field of $\tilde{\phi}$ is given by
\begin{align*}
\mathcal{F}_{Dirac}^{+}(\tilde{\phi})(s,\omega)=\mathcal{F}^{+}_{Dirac}(\phi)(s+t_{0}-\left\langle  x_{0},\omega \right\rangle, \omega).
\end{align*}
We can calculate the associated spinor Lie algebra at null infinity as follows, 
\begin{align*}
\mathcal{L}_{\frac{\partial}{\partial t}}\mathcal{F}_{Dirac}^{+}(\phi)(s,\omega):=\frac{d}{d\epsilon}(\mathcal{F}_{Dirac}^{+}(\phi)(s+\epsilon, \omega))|_{\epsilon=0}=\partial_{s}\mathcal{F}_{Dirac}^{+}(\phi)(s,\omega),
\end{align*}
and 
\begin{align*}
\mathcal{L}_{\frac{\partial}{\partial x^{i}}}\mathcal{F}_{Dirac}^{+}(\phi)(s,\omega):=\frac{d}{d\epsilon}(\mathcal{F}^{+}_{Dirac}(\phi)(s-\epsilon\omega^{i}, \omega))|_{\epsilon=0}=-\omega^{i}\partial_{s}\mathcal{F}_{Dirac}^{+}(\phi)(s,\omega).
\end{align*}
{\bf Scaling invariance:} $\mathbb{R}$ gives a natural action on Dirac spinors by scaling as follows,
\begin{align*}
\phi(x)\mapsto \tilde{\phi}(x):=\phi(exp(-\lambda)x),
\end{align*}
where $\lambda\in\mathbb{R}$. Then the forward radiation field of $\tilde{\phi}$ is given by
\begin{align*}
\mathcal{F}_{Dirac}^{+}(\tilde{\phi})(s,\omega)=exp(\lambda)\mathcal{F}^{+}_{Dirac}(\phi)(exp(-\lambda)s,\omega).
\end{align*}
We can calculate the associated spinor Lie algebra at null infinity as follows, 
\begin{align*}
\mathcal{L}_{\frac{\partial}{\partial \lambda}}\mathcal{F}_{Dirac}^{+}(\phi)(s,\omega):&=\frac{d}{d\epsilon}(exp(\epsilon)\mathcal{F}_{Dirac}^{+}(\phi)(exp(-\epsilon)s,\omega))|_{\epsilon=0}\\
&=(I-s\partial_{s})\mathcal{F}_{Dirac}^{+}(\phi)(s,\omega).
\end{align*}
\begin{remark}\label{dirac: why we use these vector field}
In view of the discussions above, the vector fields in $\hat{\mathcal{A}}$ are indeed well defined vector fields for radiation fields of Dirac spinors at null infinity.
\end{remark}


\section{Spinor energy estimates}
In this section, we will present energy inequalities, including the ghost weight energy estimate and the Klainerman-Sobolev inequality, for the Dirac equation.

\begin{lemma}\label{dirac: current-div}
Let $\phi(t,x)$ be the solution of Eq. \eqref{dirac: dirac equation}. Set
\begin{align*}
J^{\mu}(\phi)=\left\langle \gamma^{0}\gamma^{\mu}\phi,\phi\right\rangle,
\end{align*}
to be the charge current. Then we have
\begin{align*}
	\partial_{\mu}J^{\mu}=\left\langle \gamma^{0}\Phi,\phi \right\rangle+\left\langle\gamma^{0}\phi,\Phi\right\rangle.
\end{align*}
 \end{lemma}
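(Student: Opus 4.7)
The plan is to carry out a direct product-rule computation of $\partial_\mu J^\mu$, substitute the equation $\gamma^\mu\partial_\mu \phi = \Phi$ in one term, and exploit the Hermiticity of the matrices $\gamma^0\gamma^\mu$ to rewrite the other term. There is no analytic difficulty; the whole statement is an algebraic identity driven by the choice of adjoint $\langle\cdot,\cdot\rangle$ on $\mathbb{C}^4$ together with the conjugation properties of the gamma matrices in the Weyl representation.

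First, applying the Leibniz rule to the standard Hermitian inner product yields
\begin{equation*}
\partial_\mu J^\mu(\phi) \;=\; \langle \gamma^0\gamma^\mu \partial_\mu\phi,\phi\rangle \;+\; \langle \gamma^0\gamma^\mu\phi,\partial_\mu\phi\rangle.
\end{equation*}
For the first term I would simply invoke $\gamma^\mu\partial_\mu\phi = \mathcal{D}\phi = \Phi$, which immediately turns it into $\langle \gamma^0\Phi,\phi\rangle$. The second term is the one that requires a small observation: in the Weyl representation given in the introduction, $(\gamma^0)^\dagger = \gamma^0$ and $(\gamma^i)^\dagger = -\gamma^i$, while the anticommutation relation $\{\gamma^\mu,\gamma^\nu\}=-2\eta^{\mu\nu}I$ gives $(\gamma^0)^2 = I$ and $\gamma^0\gamma^i = -\gamma^i\gamma^0$. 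A two-line check then shows $(\gamma^0\gamma^\mu)^\dagger = \gamma^0\gamma^\mu$ for every $\mu\in\{0,1,2,3\}$, i.e. all four matrices $\gamma^0\gamma^\mu$ are Hermitian on $\mathbb{C}^4$.

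Using this Hermiticity I would slide $\gamma^0\gamma^\mu$ from the left slot of the inner product to the right slot:
\begin{equation*}
\langle \gamma^0\gamma^\mu\phi,\partial_\mu\phi\rangle \;=\; \langle \phi,\gamma^0\gamma^\mu\partial_\mu\phi\rangle \;=\; \langle \phi,\gamma^0\Phi\rangle,
\end{equation*}
and then once more using $(\gamma^0)^\dagger=\gamma^0$ to rewrite this as $\langle \gamma^0\phi,\Phi\rangle$. Adding the two terms gives the claimed identity $\partial_\mu J^\mu = \langle \gamma^0\Phi,\phi\rangle + \langle \gamma^0\phi,\Phi\rangle$.

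The only point that could be considered an obstacle is making the adjoint structure transparent, since the gamma matrices are individually not Hermitian; the key fact that unlocks the computation is the uniform Hermiticity of $\gamma^0\gamma^\mu$, which is a representation-independent consequence of $(\gamma^0)^2=I$ and $\{\gamma^0,\gamma^i\}=0$. Once that is in hand the rest is purely mechanical, and in particular the conclusion implies the conservation law $\partial_\mu J^\mu=0$ for the homogeneous equation $\mathcal{D}\phi=0$, which will later feed into the energy estimates of the section.
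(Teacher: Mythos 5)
Your proposal is correct and is essentially the same computation as in the paper: Leibniz rule, substitute $\gamma^\mu\partial_\mu\phi=\Phi$ in the first term, and use the Hermiticity of $\gamma^0\gamma^\mu$ (equivalently, of $\gamma^0$ and the conjugation property of $\gamma^\mu$) to move the matrix in the second term. The paper compresses the Hermiticity step into a single equality $\langle\gamma^0\gamma^\mu\phi,\partial_\mu\phi\rangle=\langle\gamma^0\phi,\gamma^\mu\partial_\mu\phi\rangle$ without spelling out the adjoint identities, which you do explicitly.
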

\begin{proof}
\begin{align*}
\partial_{\mu}J^{\mu}&=\left\langle \gamma^{0}\gamma^{\mu}\partial_{\mu}\phi,\phi\right\rangle+\left\langle \gamma^{0}\gamma^{\mu}\phi,\partial_{\mu}\phi\right\rangle,\\
&=\left\langle \gamma^{0}\gamma^{\mu}\partial_{\mu}\phi,\phi\right\rangle+\left\langle \gamma^{0}\phi,\gamma^{\mu}\partial_{\mu}\phi\right\rangle,\\
&=\left\langle \gamma^{0}\Phi,\phi\right\rangle+\left\langle \gamma^{0}\phi,\Phi\right\rangle.
\end{align*}
\end{proof}

\begin{lemma}\label{dirac: Classical energy estimate}
Let $\phi(t,x)$ be the solution of Eq. \eqref{dirac: dirac equation}, then for any given $t_{0}<t_{1}$, we have
	\begin{equation*}
		\int_{\mathbb{R}^{3}}|\phi|^{2}(t_{2},x)dx-\int_{\mathbb{R}^{3}}|\phi|^{2}(t_{1},x)dx=\int_{t_{1}}^{t_{2}}\int_{\mathbb{R}^{3}}\left\langle \gamma^{0}\Phi,\phi \right\rangle+\left\langle\gamma^{0}\phi,\Phi\right\rangle dx dt.
	\end{equation*}
\end{lemma}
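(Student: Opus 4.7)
The plan is to integrate the divergence identity from Lemma \ref{dirac: current-div} over the spacetime slab $[t_1,t_2]\times\mathbb{R}^3$. First I would identify the temporal component of the current $J^\mu$. Since $\{\gamma^0,\gamma^0\}=-2\eta^{00}I=2I$, we have $(\gamma^0)^2=I$, so
\[
J^0(\phi)=\langle \gamma^0\gamma^0\phi,\phi\rangle=\langle \phi,\phi\rangle=|\phi|^2.
\]
Thus the desired identity is equivalent to
\[
\int_{\mathbb{R}^3}J^0(t_2,x)\,dx-\int_{\mathbb{R}^3}J^0(t_1,x)\,dx=\int_{t_1}^{t_2}\!\!\int_{\mathbb{R}^3}\partial_\mu J^\mu\,dx\,dt,
\]
after applying Lemma \ref{dirac: current-div} to rewrite the right-hand side.

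Next I would apply Stokes' (divergence) theorem in the slab. Writing $\partial_\mu J^\mu=\partial_t J^0+\partial_i J^i$ and integrating over $[t_1,t_2]\times\mathbb{R}^3$ yields
\[
\int_{t_1}^{t_2}\!\!\int_{\mathbb{R}^3}\partial_\mu J^\mu\,dx\,dt=\int_{\mathbb{R}^3}J^0(t_2,x)\,dx-\int_{\mathbb{R}^3}J^0(t_1,x)\,dx+\int_{t_1}^{t_2}\!\!\int_{\mathbb{R}^3}\partial_i J^i\,dx\,dt.
\]
The remaining step is to show that the spatial divergence term vanishes, i.e.\ $\int_{\mathbb{R}^3}\partial_i J^i(t,\cdot)\,dx=0$ for each fixed $t$. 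This follows because the initial data $\phi_0\in C_0^\infty(\mathbb{R}^3,\mathbb{C}^4)$ and the source $\Phi\in C_0^\infty(\mathbb{R}^{1+3},\mathbb{C}^4)$ are compactly supported, and by the finite speed of propagation for the Dirac equation (equivalently, for $\Box=-\mathcal{D}^2$) the solution $\phi(t,\cdot)$ has compact spatial support for each $t$. Hence $J^i(t,\cdot)$ has compact spatial support, and by the classical divergence theorem on $\mathbb{R}^3$ the integral vanishes.

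There is no real obstacle here; the only subtle point is the justification of the vanishing spatial boundary term, which rests on finite propagation speed together with the compact support hypothesis on initial data and source. Combining Lemma \ref{dirac: current-div} with the two displays above gives exactly the stated identity.
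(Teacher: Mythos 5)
Your proof is correct and takes essentially the same approach as the paper: integrate the divergence identity from Lemma \ref{dirac: current-div} over the slab $[t_1,t_2]\times\mathbb{R}^3$ and identify $J^0=|\phi|^2$. You spell out the vanishing of the spatial divergence term via compact support and finite speed of propagation, a point the paper leaves implicit.
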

\begin{proof}
  Integrate $\partial_{\mu}J^{\mu}$ in Lemma \ref{dirac: current-div} on spacetime region $[t_{1},t_{2}]\times \mathbb{R}^{3}$, we have
  \begin{align*}
  \int_{t_{1}}^{t_{2}}\int_{\mathbb{R}^{3}}\left\langle \gamma^{0}\Phi,\phi \right\rangle+\left\langle\gamma^{0}\phi,\Phi\right\rangle dxdt&= \int_{t_{1}}^{t_{2}}\int_{\mathbb{R}^{3}}\partial_{\mu}J^{\mu}dxdt,\\
  &=\int_{\mathbb{R}^{3}}J^{0}(t_{2},x)dx-\int_{\mathbb{R}^{3}}J^{0}(t_{1},x)dx,\\
  &=\int_{\mathbb{R}^{3}}|\phi|^{2}(t_{2},x)dx-\int_{\mathbb{R}^{3}}|\phi|^{2}(t_{1},x)dx.
  \end{align*}
 \end{proof}

\begin{lemma}\label{dirac: energy and flux}
Let $\phi(t,x)$ be the solution of the Eq. \eqref{dirac: dirac equation}, then for any $t_{0}\leq t_{1}$, considering the future solid cone $D^{+}_{(t_{0},0)}\cap\{t_{0}\leq t\leq t_{1}\}$ with boundary:
\begin{align*}
\partial (D^{+}_{(t_{0},0)}\cap\{t_{0}\leq t\leq t_{1}\})=(C^{+}_{(t_{0},0)}\cap\{t_{0}\leq t\leq t_{1}\})\cup\Sigma_{t_{1}}^{r\leq t_{1}-t_{0}},
\end{align*}
we have
\begin{equation}\label{dirac: conservation2}
\begin{split}
	\int_{|x|\leq t_{1}-t_{0}}|\phi|^{2}(t_{1},x)dx&-2\int_{|x|\leq t_{1}-t_{0}}|P(-\omega)\phi|^{2}(t_{0}+|x|,x)dx,\\
	&=\int_{D^{+}_{(t_{0},0)}\cap\{t_{0}\leq t\leq t_{1}\}}\left\langle \gamma^{0}\Phi,\phi \right\rangle+\left\langle\gamma^{0}\phi,\Phi\right\rangle dxdt.
\end{split}
\end{equation}
For any $t_{0}<t_{1}<t_{2}$, considering the future solid truncated cone $D^{+}_{(t_{0},0)}\cap\{t_{1}\leq t\leq t_{2}\}$ with boundary:
\begin{align*}
\partial (D^{+}_{(t_{0},0)}\cap\{t_{0}\leq t\leq t_{1}\})&=(C^{+}_{(t_{0},0)}\cap\{t_{1}\leq t\leq t_{2}\})\cup\Sigma_{t_{1}}^{|x|\leq t_{1}-t_{0}}\cup\Sigma_{t_{2}}^{|x|\leq t_{2}-t_{0}},
\end{align*}
we have 
\begin{equation}\label{dirac: conservation3}
\begin{split}
&\int_{|x|\leq t_{2}-t_{0}}|\phi|^{2}(t_{2},x)dx-\int_{|x|\leq t_{1}-t_{0}}|\phi|^{2}(t_{1},x)dx,\\
&-2\int_{t_{1}-t_{0}\leq|x|\leq t_{2}-t_{0}}|P(-\omega)\phi|^{2}(t_{0}+|x|,x)dx,\\
&=\int_{D^{+}_{(t_{0},0)}\cap\{t_{1}\leq t\leq t_{2}\}}\left\langle \gamma^{0}\Phi,\phi \right\rangle+\left\langle\gamma^{0}\phi,\Phi\right\rangle dxdt.
\end{split}
\end{equation}
\end{lemma}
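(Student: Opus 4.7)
The plan is to derive both identities from a single application of the divergence theorem to the charge current $J^\mu = \langle\gamma^{0}\gamma^{\mu}\phi,\phi\rangle$ over the relevant solid region, using Lemma \ref{dirac: current-div} to handle the interior integral. Treating $(J^0,J^1,J^2,J^3)$ as a Euclidean vector field on $\mathbb{R}^{1+3}$ and applying the standard divergence theorem (the region is Lipschitz, so no subtlety beyond the algebraic computation of the conormal flux on the null face) reduces the problem to identifying each boundary contribution.

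The spatial slices are immediate: on $\Sigma_{t_1}^{|x|\le t_1-t_0}$ the outward Euclidean normal is $\partial_t$, so the flux density is $J^0 = \langle\phi,\phi\rangle = |\phi|^2$; the bottom disk in the truncated case contributes $-|\phi|^2$ with the opposite sign. The heart of the argument is the null face $C^+_{(t_0,0)}\cap\{t_0\le t\le t_1\}$, parametrized as $(t_0+r,r\omega)$ for $r\in[0,t_1-t_0]$, $\omega\in\mathbb{S}^2$. The outward Euclidean unit normal here is $(-1,\omega)/\sqrt{2}$ and the induced area element is $\sqrt{2}\,r^2\,dr\,d\omega$, so the flux density contracts to
\begin{equation*}
(-J^0 + J^i\omega_i)\, r^2\,dr\,d\omega = -\langle(I-\gamma^{0}\gamma^{i}\omega_i)\phi,\phi\rangle\, r^2\,dr\,d\omega.
\end{equation*}
Since $I-\gamma^{0}\gamma^{i}\omega_i = 2P(-\omega)$, and since $P(-\omega)$ is Hermitian (from Hermiticity of $\gamma^0\gamma^i\omega_i$, itself a consequence of $(\gamma^0)^* = \gamma^0$, $(\gamma^i)^* = -\gamma^i$, and the anticommutation relations) and idempotent by Lemma \ref{dirac: properties of projection operators}, one has $\langle P(-\omega)\phi,\phi\rangle = |P(-\omega)\phi|^2$. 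Converting $r^2\,dr\,d\omega = dx$ through $x = r\omega$ turns the null-face flux into exactly $-2\int_{|x|\le t_1-t_0}|P(-\omega)\phi|^2(t_0+|x|,x)\,dx$.

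Combining these fluxes with $\int_V\partial_\mu J^\mu\,dV$, rewritten via Lemma \ref{dirac: current-div} as $\int_V(\langle\gamma^{0}\Phi,\phi\rangle+\langle\gamma^{0}\phi,\Phi\rangle)\,dx\,dt$, yields \eqref{dirac: conservation2}. For \eqref{dirac: conservation3}, one applies the same argument to the truncated region $D^+_{(t_0,0)}\cap\{t_1\le t\le t_2\}$; the two spatial disks contribute $\int|\phi|^2(t_2,\cdot)\,dx$ and $-\int|\phi|^2(t_1,\cdot)\,dx$, while the lateral null piece reproduces the $-2\int|P(-\omega)\phi|^2$ term but now integrated over the annular region $t_1-t_0\le|x|\le t_2-t_0$. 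The only real subtlety is the algebraic identification of the null-cone flux with $|P(-\omega)\phi|^2$; once that is in hand, both equations follow by direct bookkeeping of boundary components, so I do not anticipate any substantial obstacle beyond being careful with orientations and the factor $\sqrt{2}$ that appears separately in the normal and the area element and cancels in the product.
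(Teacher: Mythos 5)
Your proposal is correct and takes essentially the same route as the paper: integrate $\partial_{\mu}J^{\mu}$ from Lemma \ref{dirac: current-div} over the solid (truncated) cone and identify the null-face flux as $-2|P(-\omega)\phi|^{2}$ via $I-\gamma^{0}\gamma^{i}\omega_{i}=2P(-\omega)$, with your Hermiticity remark supplying the same justification the paper gets from the projection identities. The only cosmetic differences are that the paper evaluates the bulk integral by Fubini/iterated integration and a spatial divergence theorem rather than an explicit Euclidean normal and area element on the cone, and it obtains \eqref{dirac: conservation3} by subtracting two instances of \eqref{dirac: conservation2} instead of applying the divergence theorem directly to the truncated region.
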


\begin{proof}
 Integrate $\partial_{\mu}J^{\mu}$ in Lemma \ref{dirac: current-div} on the spacetime region $D^{+}_{(t_{0},0)}\cap\{t_{0}\leq t\leq t_{1}\}$, we can obtain Eq. \eqref{dirac: conservation2} as follows,
\begin{align*}
&\int_{D^{+}_{(t_{0},0)}\cap\{t_{0}\leq t\leq t_{1}\}}\left\langle \gamma^{0}\Phi,\phi \right\rangle+\left\langle\gamma^{0}\phi,\Phi\right\rangle dxdt,\\
&=\int_{D^{+}_{(t_{0},0)}\cap\{t_{0}\leq t\leq t_{1}\}} \partial_{\mu}J^{\mu}(t,x)dxdt,\\
&=\int_{t_{0}}^{t_{1}}\int_{|x|\leq t-t_{0}}\partial_{i}J^{i}(t,x)+\int_{|x|\leq t_{1}-t_{0}}\int_{t_{0}+|x|}^{t_{1}}\partial_{t}J^{0}(t,x)dxdt,\\
&=\int_{t_{0}}^{t_{1}}\int_{|x|=t-t_{0}}J^{i}(t,x)\omega_{i}d\sigma_{x}dt+\int_{|x|\leq t_{1}-t_{0}}J^{0}(t_{1},x)dx-\int_{|x|\leq t_{1}-t_{0}}J^{0}(t_{0}+|x|,x)dx,\\
&=\int_{|x|\leq t_{1}-t_{0}}J^{0}(t_{1},x)dx-\int_{|x|\leq t_{1}-t_{0}}(J^{0}-J^{i}\omega_{i})(t_{0}+|x|,x)dx,\\
&=\int_{|x|\leq t_{1}-t_{0}}|\phi|^{2}(t_{1},x)dx-\int_{|x|\leq t_{1}-t_{0}}\left\langle (I-\gamma^{0}\gamma^{i}\omega_{i})\phi,\phi \right\rangle(t_{0}+|x|,x)dx,\\
&=\int_{|x|\leq t_{1}-t_{0}}|\phi|^{2}(t_{1},x)dx-2\int_{|x|\leq t_{1}-t_{0}}|P(-\omega)\phi|^{2}(t_{0}+|x|,x)dx,
\end{align*}
in the third equality, we use the divergence theorem in sphere $\{|x|\leq t-t_{0}\}$. In the last equality, we use $I=\frac{1}{2}(I+\gamma^{0}\gamma^{i}\omega_{i})+\frac{1}{2}(I-\gamma^{0}\gamma^{i}\omega_{i})=P(\omega)+P(-\omega)$ and $P(\omega)P(-\omega)=P(-\omega)P(\omega)=0$.
In view of Eq. \eqref{dirac: conservation2} with $t_{0},t_{1}$ and $t_{0},t_{2}$, we can obtain Eq. \eqref{dirac: conservation3}.
\begin{align*}
&\int_{|x|\leq t_{2}-t_{0}}|\phi|^{2}(t_{2},x)dx-\int_{|x|\leq t_{2}-t_{0}}|\phi|^{2}(t_{1},x)dx,\\
&-2\int_{t_{1}-t_{0}\leq|x|\leq t_{2}-t_{0}}|P(-\omega)\phi|^{2}(t_{0}+|x|,x)dx,\\
&=\int_{D^{+}_{(t_{0},0)}\cap\{t_{1}\leq t\leq t_{2}\}}\left\langle \gamma^{0}\Phi,\phi \right\rangle+\left\langle\gamma^{0}\phi,\Phi\right\rangle dxdt.
\end{align*}
\end{proof}

\begin{lemma}\label{dirac: conservation2 incoming}
Let $\phi(t,x)$ be the solution of the Eq \eqref{dirac: dirac equation}, then for any $t_{1}\leq t_{0}$, considering the future solid cone $D^{-}_{(t_{0},0)}\cap\{t_{1}\leq t\leq t_{0}\}$ with boundary:
\begin{align*}
\partial (D^{-}_{(t_{0},0)}\cap\{t_{1}\leq t\leq t_{0}\})=(C^{-}_{(t_{0},0)}\cap\{t_{1}\leq t\leq t_{0}\})\cup\Sigma_{t_{1}}^{r\leq t_{0}-t_{1}},
\end{align*}
we have
\begin{equation*}
\begin{split}
	2\int_{|x|\leq t_{0}-t_{1}}|P(\omega)\phi|^{2}(t_{0}-|x|,x)dx&-\int_{|x|\leq t_{0}-t_{1}}|\phi|^{2}(t_{1},x)dx,\\
	&=\int_{D^{-}_{(t_{0},0)}\cap\{t_{1}\leq t\leq t_{0}\}}\left\langle \gamma^{0}\Phi,\phi \right\rangle+\left\langle\gamma^{0}\phi,\Phi\right\rangle dxdt.
\end{split}
\end{equation*}
\end{lemma}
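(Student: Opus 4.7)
The strategy mirrors the proof of Lemma~\ref{dirac: energy and flux} with the time orientation reversed. First I would apply Lemma~\ref{dirac: current-div} and integrate the divergence identity $\partial_\mu J^\mu = \langle\gamma^0\Phi,\phi\rangle+\langle\gamma^0\phi,\Phi\rangle$ over the past solid truncated cone $D^{-}_{(t_0,0)}\cap\{t_1\le t\le t_0\}$. To evaluate this spacetime integral I would exploit two different Fubini orderings of the region
\[
D^{-}_{(t_0,0)}\cap\{t_1\le t\le t_0\}=\{(t,x):\,|x|\le t_0-t_1,\;t_1\le t\le t_0-|x|\}=\{(t,x):\,t_1\le t\le t_0,\;|x|\le t_0-t\}.
\]
For the time term $\partial_t J^0$ I would use the first description and perform the $t$-integration directly, producing $\int_{|x|\le t_0-t_1}[J^0(t_0-|x|,x)-J^0(t_1,x)]\,dx$. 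For the spatial term $\partial_i J^i$ I would use the second description, apply the classical divergence theorem on each ball $\{|x|\le t_0-t\}$ to obtain $\int_{t_1}^{t_0}\int_{|x|=t_0-t}J^i\omega_i\,d\sigma_x\,dt$, and then use the coarea-type change of variables $r=t_0-t$ to rewrite it as $\int_{|x|\le t_0-t_1}J^i(t_0-|x|,x)\omega_i\,dx$.

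Summing the two contributions yields
\[
\int_{D^{-}_{(t_0,0)}\cap\{t_1\le t\le t_0\}}\partial_\mu J^\mu\,dxdt=\int_{|x|\le t_0-t_1}(J^0+J^i\omega_i)(t_0-|x|,x)\,dx-\int_{|x|\le t_0-t_1}J^0(t_1,x)\,dx.
\]
To identify the boundary integrand I would invoke Lemma~\ref{dirac: properties of projection operators}. Since $P(\omega)$ is a self-adjoint projection satisfying $P(\omega)+P(-\omega)=I$ and $P(\omega)P(-\omega)=0$, one has
\[
J^0+J^i\omega_i=\langle(I+\gamma^0\gamma^i\omega_i)\phi,\phi\rangle=2\langle P(\omega)\phi,\phi\rangle=2|P(\omega)\phi|^2,
\]
while $J^0(t_1,x)=|\phi|^2(t_1,x)$. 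Substituting back and equating to the source-term integral delivered by Lemma~\ref{dirac: current-div} gives exactly the claimed identity.

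I do not anticipate any genuine obstacle: the proof is a direct geometric mirror of the future-cone case. The only point that requires bookkeeping is the orientation-induced sign change. For the future cone the null generator is parametrized by $t=t_0+|x|$, so the divergence-theorem computation produces $J^0-J^i\omega_i$ and hence $P(-\omega)$; here the past null cone is parametrized by $t=t_0-|x|$, which flips the sign in front of $J^i\omega_i$ and yields $P(\omega)$ instead. This sign flip is precisely what is expected from the principle that outgoing fluxes on the future cone see the $P(-\omega)$ component while ingoing fluxes on the past cone see the $P(\omega)$ component, consistent with the peeling estimates recalled in \cite{MR3362023,MR4184660}.
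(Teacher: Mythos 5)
Your proposal is correct and follows essentially the same route as the paper: integrate the divergence identity of Lemma \ref{dirac: current-div} over the past truncated cone, use the two Fubini orderings with the divergence theorem on the balls $\{|x|\le t_0-t\}$, convert the sphere-flux term into a volume integral along $t=t_0-|x|$, and identify $J^0+J^i\omega_i=2|P(\omega)\phi|^2$ via the projection properties of Lemma \ref{dirac: properties of projection operators}. The sign discussion you add (past cone seeing $P(\omega)$ versus future cone seeing $P(-\omega)$) matches the paper's computation exactly.
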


\begin{proof}
The proof is similar to Lemma \ref{dirac: energy and flux}. Integrate $\partial_{\mu}J^{\mu}$ in Lemma \ref{dirac: current-div} on the spacetime region $D^{-}_{(t_{0},0)}\cap\{t_{1}\leq t\leq t_{0}\}$,
\begin{align*}
&\int_{D^{-}_{(t_{0},0)}\cap\{t_{1}\leq t\leq t_{0}\}}\left\langle \gamma^{0}\Phi,\phi \right\rangle+\left\langle\gamma^{0}\phi,\Phi\right\rangle dxdt,\\
&=\int_{D^{-}_{(t_{0},0)}\cap\{t_{1}\leq t\leq t_{0}\}} \partial_{\mu}J^{\mu}(t,x)dxdt,\\
&=\int_{t_{1}}^{t_{0}}\int_{|x|\leq t_{0}-t}\partial_{i}J^{i}(t,x)dxdt+\int_{|x|\leq t_{0}-t_{1}}\int_{t_{1}}^{t_{0}-|x|}\partial_{t}J^{0}(t,x)dxdt,\\
&=\int_{t_{1}}^{t_{0}}\int_{|x|=t_{0}-t}J^{i}(t,x)\omega_{i}d\sigma_{x}dt+\int_{|x|\leq t_{0}-t_{1}}J^{0}(t_{0}-|x|,x)dx-\int_{|x|\leq t_{0}-t_{1}}J^{0}(t_{1},x)dx, \\
&=\int_{|x|\leq t_{0}-t_{1}}(J^{0}+J^{i}\omega_{i})(t_{0}-|x|,x)dx-\int_{|x|\leq t_{0}-t_{1}}J^{0}(t_{1},x)dx,\\
&=\int_{|x|\leq t_{0}-t_{1}}\left\langle (I+\gamma^{0}\gamma^{i}\omega_{i})\phi,\phi \right\rangle(t_{0}-|x|,x)dx-\int_{|x|\leq t_{0}-t_{1}}J^{0}(t_{1},x)dx, \\
&=2\int_{|x|\leq t_{0}-t_{1}}|P(\omega)\phi|^{2}(t_{0}-|x|,x)dx-\int_{|x|\leq t_{0}-t_{1}}J^{0}(t_{1},x)dx,
\end{align*}
in the third equality, we use the divergence theorem in sphere $\{|x|\leq t_{0}-t\}$. In the last equality, we use $I=\frac{1}{2}(I+\gamma^{0}\gamma^{i}\omega_{i})+\frac{1}{2}(I-\gamma^{0}\gamma^{i}\omega_{i})=P(\omega)+P(-\omega)$ and $P(\omega)P(-\omega)=P(-\omega)P(\omega)=0$.
\end{proof}

\begin{lemma}\label{dirac: Ghost weight}(Ghost weight\cite{MR2666888})
Let $\phi(t,x)$ be solution of the Eq. \eqref{dirac: dirac equation}. For any $t_{1}\leq t_{2}$ and $\mu\in (0,\frac{1}{2})$ we have 
\begin{equation*}
\begin{split}
\Big(\int_{t_{1}}^{t_{2}}\int_{\mathbb{R}^{3}}\frac{|P(-\omega)\phi|^{2}}{(1+|t-r|)^{1+2\mu}}dxdt\Big)&\leq \frac{2}{\mu}[\int_{\mathbb{R}^{3}}|\phi|^{2}(t_{1},x)dx+\int_{\mathbb{R}^{3}}|\phi|^{2}(t_{2},x)dx,\\
&+\int_{t_{1}}^{t_{2}}\int_{\mathbb{R}^{3}}\left\langle \gamma^{0}\Phi,\phi \right\rangle+\left\langle\gamma^{0}\phi,\Phi\right\rangle dx dt],
\end{split}
\end{equation*}
where the implicit constant does not depend on the solution.
\end{lemma}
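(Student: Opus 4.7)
The plan is to adapt Alinhac's ghost weight technique to the Dirac setting, leveraging the identity $J^{0}-\omega_{i}J^{i}=2|P(-\omega)\phi|^{2}$ that already appeared in the boundary analysis of Lemma \ref{dirac: energy and flux}. The idea is to upgrade that null-cone flux identity to an interior spacetime estimate by multiplying the charge current $J^{\mu}$ by a bounded, monotone weight depending only on $t-r$, whose derivative supplies precisely the factor $(1+|t-r|)^{-1-2\mu}$.

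Concretely, I would introduce
\[
q(u):=\int_{-\infty}^{u}(1+|\tau|)^{-1-2\mu}\,d\tau,
\]
so that $q$ is smooth, nondecreasing, $q'(u)=(1+|u|)^{-1-2\mu}$, and $q(u)\in[0,1/\mu]$ since $\int_{\mathbb{R}}(1+|\tau|)^{-1-2\mu}d\tau=1/\mu$ for $\mu\in(0,\tfrac{1}{2})$. Then I would form the modified charge current
\[
\tilde{J}^{\mu}:=\bigl(\tfrac{1}{\mu}+q(t-r)\bigr)J^{\mu},\qquad J^{\mu}=\langle\gamma^{0}\gamma^{\mu}\phi,\phi\rangle,
\]
whose prefactor lies in $[1/\mu,2/\mu]$. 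Using $\partial_{\mu}(t-r)=(1,-\omega_{i})$ together with the decomposition $I=P(\omega)+P(-\omega)$ and $P(\omega)P(-\omega)=0$ from Lemma \ref{dirac: properties of projection operators} (exactly as in the boundary computation of Lemma \ref{dirac: energy and flux}), one finds $\partial_{\mu}(t-r)J^{\mu}=J^{0}-\omega_{i}J^{i}=2|P(-\omega)\phi|^{2}$, and therefore, by Lemma \ref{dirac: current-div},
\[
\partial_{\mu}\tilde{J}^{\mu}=\frac{2|P(-\omega)\phi|^{2}}{(1+|t-r|)^{1+2\mu}}+\bigl(\tfrac{1}{\mu}+q(t-r)\bigr)\bigl(\langle\gamma^{0}\Phi,\phi\rangle+\langle\gamma^{0}\phi,\Phi\rangle\bigr).
\]

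I would then integrate this identity over the slab $[t_{1},t_{2}]\times\mathbb{R}^{3}$ and apply the divergence theorem; finite propagation speed for compactly supported data ensures that the spatial boundary contributions vanish, leaving only the time-slice integrals of $\tilde{J}^{0}=\bigl(\tfrac{1}{\mu}+q(t-|x|)\bigr)|\phi|^{2}$ at $t=t_{1},t_{2}$. Rearranging and using the uniform pointwise bound $\tfrac{1}{\mu}+q(\cdot)\leq \tfrac{2}{\mu}$ yields the stated inequality with constant $2/\mu$.

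The procedure is essentially routine once one observes that the spinor null structure forces the divergence of the weighted current to produce exactly $|P(-\omega)\phi|^{2}$ — no other quadratic in $\phi$ appears. The only real choice is the weight; using the \emph{linear} modifier $\tfrac{1}{\mu}+q(t-r)$ (rather than the exponential $e^{q(t-r)}$ sometimes seen in Alinhac's original formulation) keeps all constants transparent and produces the factor $2/\mu$ directly, avoiding the exponential blow-up $e^{1/\mu}$ as $\mu\to 0^{+}$.
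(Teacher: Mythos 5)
Your argument is correct, and it is a genuinely different route from the paper's. You run the classical Alinhac multiplier argument directly: modify the charge current by the bounded weight $\tfrac{1}{\mu}+q(t-r)$ with $q'(u)=(1+|u|)^{-1-2\mu}$, use $\partial_{\mu}(t-r)\,J^{\mu}=J^{0}-\omega_{i}J^{i}=2|P(-\omega)\phi|^{2}$ (the same projection identity the paper uses on the cone boundary), and integrate the divergence over the slab. The paper instead never differentiates a weight: it takes the exact null-cone flux identities of Lemma \ref{dirac: energy and flux} (Eqs.\ \eqref{dirac: conservation2} and \eqref{dirac: conservation3}), multiplies them by $(1+|t_{0}|)^{-1-2\mu}$, integrates over the family of cone vertices $t_{0}\in[t_{1},t_{2}]$ and $t_{0}\in(-\infty,t_{1}]$, and recovers the bulk weight $(1+|t-|x||)^{-1-2\mu}$ by Fubini with $t=t_{0}+|x|$. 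Your method is shorter and keeps the constant $2/\mu$ completely transparent; the paper's averaging-over-cones method reuses already-established identities, avoids any discussion of the regularity of the weight (your $q(t-|x|)$ is only Lipschitz across $r=0$ and $t=r$, which one handles by a routine limiting argument or by noting $C^{0,1}$ suffices for the divergence theorem), and makes visible that the ghost-weight bulk term is an average of outgoing null fluxes. Two shared caveats, not gaps specific to you: both arguments bound the source contribution by pulling the weight out, which strictly requires the pairing $\left\langle\gamma^{0}\Phi,\phi\right\rangle+\left\langle\gamma^{0}\phi,\Phi\right\rangle$ to be treated in absolute value (the paper's ``neglecting the range'' step has exactly the same looseness, and in all applications the term is estimated in absolute value anyway); and your vanishing of spatial boundary terms indeed follows from finite propagation speed for the compactly supported data and source assumed in Eq.\ \eqref{dirac: dirac equation}.
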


\begin{proof}
Assume $t_{1}\leq t_{0}\leq t_{2}$. Let us multiply $\frac{1}{(1+|t_{0}|)^{1+2\mu}}$ on both side of Eq. \eqref{dirac: conservation2}. Then integrate it with respect to $t_{0}\in [t_{1},t_{2}]$. We obtain,
\begin{align*}
&\int_{t_{1}}^{t_{2}}\int_{|x|\leq t_{2}-t_{0}}\frac{|\phi|^{2}(t_{2},x)}{(1+|t_{0}|)^{1+2\mu}}dx dt_{0},\\
&-2\int_{t_{1}}^{t_{2}}\int_{|x|\leq t_{2}-t_{0}}\frac{|P(-\omega)\phi|^{2}(t_{0}+|x|,x)}{(1+|t_{0}|)^{1+2\mu}}dx dt_{0},\\
&=\int_{t_{1}}^{t_{2}}\int_{D^{+}_{(t_{0},0)}\cap\{t_{0}\leq t\leq t_{2}\}}\frac{\left\langle \gamma^{0}\Phi,\phi \right\rangle+\left\langle\gamma^{0}\phi,\Phi\right\rangle}{(1+|t_{0}|)^{1+2\mu}} dx dt dt_{0}.
\end{align*}
Neglecting the range of $x$ and $t_{0}$, the first term of the left hand side can be bounded by 
\begin{align*}
\int_{\mathbb{R}}\int_{\mathbb{R}^{3}}\frac{|\phi|^{2}(t_{2},x)}{(1+|t_{0}|)^{1+2\mu}}dx dt_{0}\leq \frac{1}{\mu}\int_{\mathbb{R}^{3}}|\phi|^{2}(t_{2},x)dx.
\end{align*}
Set $t=t_{0}+|x|$ and apply Fubini theorem, the second term of the left hand side equals to
\begin{align*}
-2\int_{t_{1}}^{t_{2}}\int_{|x|\leq t-t_{1}}\frac{|P(-\omega)\phi|^{2}(t,x)}{(1+|t-|x||)^{1+2\mu}}dx dt.
\end{align*}
Neglecting the range of $x$ and $t_{0}$, the right hand side term can be bounded by
\begin{align*}
\int_{\mathbb{R}}\int_{t_{1}}^{t_{2}}\int_{\mathbb{R}^{3}}\frac{\left\langle \gamma^{0}\Phi,\phi \right\rangle+\left\langle\gamma^{0}\phi,\Phi\right\rangle}{(1+|t_{0}|)^{1+2\mu}} dx dt, dt_{0}\leq \frac{1}{\mu}\int_{t_{1}}^{t_{2}}\int_{\mathbb{R}^{3}}\left\langle \gamma^{0}\Phi,\phi \right\rangle+\left\langle\gamma^{0}\phi,\Phi\right\rangle dx dt, 
\end{align*}
Taking $t_{0}\leq t_{1}$, multiplying $\frac{1}{(1+|t_{0}|)^{1+2\mu}}$ on both side of Eq. \eqref{dirac: conservation3}, and integrating with respect to $t_{0}\in (-\infty,t_{2}]$ we have
\begin{align*}
&\int_{-\infty}^{t_{1}}\int_{|x|\leq t_{2}-t_{0}}\frac{|\phi|^{2}(t_{2},x)}{(1+|t_{0}|)^{1+2\mu}}dxdt_{0}-\int_{-\infty}^{t_{1}}\int_{|x|\leq t_{1}-t_{0}}\frac{|\phi|^{2}(t_{1},x)}{(1+|t_{0}|)^{1+2\mu}}dxdt_{0},\\
&-2\int_{-\infty}^{t_{1}}\int_{t_{1}-t_{0}\leq|x|\leq t_{2}-t_{0}}\frac{|P(-\omega)\phi|^{2}(t_{0}+|x|,x)}{(1+|t_{0}|)^{1+2\mu}}dxdt_{0},\\
&=\int_{-\infty}^{t_{1}}\int_{D^{+}_{(t_{0},0)}\cap\{t_{1}\leq t\leq t_{2}\}}\frac{\left\langle \gamma^{0}\Phi,\phi \right\rangle+\left\langle\gamma^{0}\phi,\Phi\right\rangle}{(1+|t_{0}|)^{1+2\mu}} dxdtdt_{0}.
\end{align*}
Neglecting the range of $x$ and $t_{0}$, the first term and the second term of the left hand side can be bounded by 
\begin{align*}
\sum_{j=1}^{2}\int_{\mathbb{R}}\int_{\mathbb{R}^{3}}\frac{|\phi|^{2}(t_{j},x)}{(1+|t_{0}|)^{1+2\mu}}dx dt_{0}\leq \sum_{j=1}^{2}\frac{1}{\mu}\int_{\mathbb{R}^{3}}|\phi|^{2}(t_{j},x)dx.
\end{align*}
By Fubini theorem, the third term of the left hand side is equal to
\begin{align*}
-2\int_{t_{1}}^{t_{2}}\int_{|x|\geq t-t_{1}}\frac{|P(-\omega)\phi|^{2}(t,x)}{(1+|t-|x||)^{1+2\mu}}dx dt.
\end{align*}
Neglecting the range of $x$ and $t_{0}$, the right hand side term can be bounded by
\begin{align*}
\int_{\mathbb{R}}\int_{t_{1}}^{t_{2}}\int_{\mathbb{R}^{3}}\frac{\left\langle \gamma^{0}\Phi,\phi \right\rangle+\left\langle\gamma^{0}\phi,\Phi\right\rangle}{(1+|t_{0}|)^{1+2\mu}} dx dt \leq \frac{1}{\mu}\int_{t_{1}}^{t_{2}}\int_{\mathbb{R}^{3}}\left\langle \gamma^{0}\Phi,\phi \right\rangle+\left\langle\gamma^{0}\phi,\Phi\right\rangle dx dt.
\end{align*}
Therefore, we have
\begin{align*}
\int_{t_{1}}^{t_{2}}\int_{\mathbb{R}^{3}}\frac{|P(-\omega)\phi|^{2}}{(1+|t-r|)^{1+2\mu}}dxdt&\leq \frac{2}{\mu}\Big(\int_{\mathbb{R}^{3}}|\phi|^{2}(t_{1},x)dx+\int_{\mathbb{R}^{3}}|\phi|^{2}(t_{2},x)dx,\\
&+\int_{t_{1}}^{t_{2}}\int_{\mathbb{R}^{3}}\left\langle \gamma^{0}\Phi,\phi \right\rangle+\left\langle\gamma^{0}\phi,\Phi\right\rangle dx dt\Big).
\end{align*}
\end{proof}

We also need Klainerman-Sobolev  inequality for spinors.
\begin{lemma}\label{dirac: KS inequality} 
Suppose that $\phi(t,x): \mathbb{R}^{1+3}\rightarrow \mathbb{C}^{4}$. Then we have the following pointwise decay estimate:
\begin{align}\label{dirac: KS inequality for spinors} 
\big(1+|t+r|\big)\big(1+|t-r|\big)^{\frac{1}{2}}|\phi(t,x)|\lesssim \sum_{|\alpha|\leq 2,\hat{\Gamma}\in\mathcal{A}}\|\Gamma^{\alpha}\phi(t,\cdot)\|_{L^{2}(\mathbb{R}^{3})}, t\geq 0.
\end{align}
where $\mathcal{A}$ is the set of conformal vector fields on $\mathbb{R}^{1+3}$ for spinors, see Eq. \eqref{dirac: conformal vector fields}.
\end{lemma}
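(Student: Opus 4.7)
The plan is to reduce the spinor Klainerman--Sobolev inequality to the classical scalar Klainerman--Sobolev inequality on $\mathbb{R}^{1+3}$ by exploiting the fact that the spinor vector fields in $\mathcal{A}$ differ from the classical conformal vector fields $\{\partial_\mu,\Omega_{\mu\nu},S\}$ only by \emph{bounded, constant matrix multipliers}, namely the terms $-\tfrac{1}{2}\gamma^\mu\gamma^\nu$.

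First I would record the commutation structure. For each modified vector field $\Gamma \in \mathcal{A}$, write $\Gamma = \Gamma_{\mathrm{cl}} + M_\Gamma$, where $\Gamma_{\mathrm{cl}}$ is the corresponding classical conformal vector field acting componentwise on the four-spinor $\phi$, and $M_\Gamma$ is a constant $4\times 4$ matrix (either $0$ or $\pm\tfrac{1}{2}\gamma^\mu\gamma^\nu$). Consequently, for any multi-index $\alpha$ with $|\alpha|\leq 2$, one can expand $\Gamma_{\mathrm{cl}}^{\alpha}\phi$ as a finite linear combination, with universal coefficients, of terms of the form $\Gamma^{\beta}\phi$ with $|\beta|\leq |\alpha|$, and conversely. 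In particular
\begin{align*}
\sum_{|\alpha|\leq 2}\|\Gamma_{\mathrm{cl}}^{\alpha}\phi(t,\cdot)\|_{L^2(\mathbb{R}^3)} \;\lesssim\; \sum_{|\alpha|\leq 2,\,\Gamma\in\mathcal{A}}\|\Gamma^{\alpha}\phi(t,\cdot)\|_{L^2(\mathbb{R}^3)}.
\end{align*}

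Next I would apply the classical scalar Klainerman--Sobolev inequality (see, e.g., \cite{MR865359}) componentwise to each of the four complex components $\phi^{(j)}(t,x)$, $j=1,2,3,4$, of the spinor $\phi$. This yields, for $t\geq 0$,
\begin{align*}
\big(1+|t+r|\big)\big(1+|t-r|\big)^{\frac{1}{2}}|\phi^{(j)}(t,x)| \;\lesssim\; \sum_{|\alpha|\leq 2}\|\Gamma_{\mathrm{cl}}^{\alpha}\phi^{(j)}(t,\cdot)\|_{L^2(\mathbb{R}^3)}.
\end{align*}
Summing over $j$ and combining with the commutation estimate above produces the stated bound
\begin{align*}
\big(1+|t+r|\big)\big(1+|t-r|\big)^{\frac{1}{2}}|\phi(t,x)| \;\lesssim\; \sum_{|\alpha|\leq 2,\,\hat{\Gamma}\in\mathcal{A}}\|\Gamma^{\alpha}\phi(t,\cdot)\|_{L^2(\mathbb{R}^3)}.
\end{align*}

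The step likely to demand the most care is the bookkeeping of the commutator expansion of $\Gamma_{\mathrm{cl}}^{\alpha}$ in terms of $\Gamma^{\alpha}$, because when $|\alpha|=2$ one has to iterate once: e.g., $\Gamma_{\mathrm{cl},1}\Gamma_{\mathrm{cl},2}\phi = \Gamma_1\Gamma_2\phi - M_{\Gamma_2}\Gamma_1\phi - \Gamma_1(M_{\Gamma_2}\phi) + M_{\Gamma_1}M_{\Gamma_2}\phi$, and so on. Since the $M_\Gamma$ are constant matrices with operator norms depending only on the $\gamma^\mu$, each such rearrangement contributes only bounded coefficients, and the derivative count is preserved. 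No other subtlety should arise, and the constants are universal.
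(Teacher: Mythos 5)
Your proposal is correct and takes essentially the same route as the paper, which simply records that each modified field differs from the corresponding classical conformal field by a constant matrix, i.e. $|(x^{\mu}\partial^{\nu}-x^{\nu}\partial^{\mu}-\tfrac{1}{2}\gamma^{\mu}\gamma^{\nu})\phi|\lesssim |(x^{\mu}\partial^{\nu}-x^{\nu}\partial^{\mu})\phi|+|\phi|$, and then applies the classical Klainerman--Sobolev inequality componentwise; your version just spells out the order-two bookkeeping. (Only a cosmetic slip: in your illustrative expansion the cross terms should be $-M_{\Gamma_1}\Gamma_2\phi-\Gamma_1(M_{\Gamma_2}\phi)$ rather than $-M_{\Gamma_2}\Gamma_1\phi-\Gamma_1(M_{\Gamma_2}\phi)$, which does not affect the argument.)
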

\begin{proof}
Note that 
\begin{align*}
|(x^{\mu}\partial^{\nu}-x^{\nu}\partial^{\mu}-\frac{1}{2}\gamma^{\mu}\gamma^{\nu})\phi|\lesssim |(x^{\mu}\partial^{\nu}-x^{\nu}\partial^{\mu})\phi|+|\phi|.
\end{align*}
Applying classical Klainerman-Sobolev inequality \cite{MR865359}, we can obtain Eq. \eqref{dirac: KS inequality for spinors}.
\end{proof}

\section{Global solutions of Dirac equations with spinor null forms}
In this section we will prove the main energy estimate which directly lead to a global solution to the semi-linear Dirac equation with spinor null forms.

We assume that the size of the initial energy is given by $\mathcal{E}^{N*}(0)=\varepsilon^{2}$. The parameter $\varepsilon$ is a small positive number and its size will be determined at the end of this section. We will take $N_{*}=6$.
We will prove the Main Energy Estimates by the standard method of continuity. We assume that there exists a $t^{*}$ such that
\begin{align}\label{dirac: 11}
\sum_{k=0}^{N^{*}}E^{k}(t^{*})+\sum_{k=0}^{N*}F^{k}(t^{*})\leq 20\varepsilon^{2}.
\end{align}
This is a legitimate assumption: \eqref{dirac: 11} holds for initial data, hence it remains correct for at least a short time interval $[0,t^{*}]$. To implement the continuity argument, we will show that there exist universal constants $\varepsilon_{0}$, under the assumption  \eqref{dirac: 11}, we can indeed obtain a better bound:
\begin{align}\label{dirac: 111}
\sum_{k=0}^{N^{*}}E^{k}(t^{*})+\sum_{k=0}^{N*}F^{k}(t^{*})\leq 18\varepsilon^{2},
\end{align}
provided that $\varepsilon<\varepsilon_{0}$.

We remark that the constant $\varepsilon_{0}$ does not depend on the lifespan $[0,t_{*}]$. Then the assumption \eqref{dirac: 11} will never be saturated so that we can continue $t^{*}$ to $+\infty$. The global existence of solutions to Eq. \eqref{dirac: semi-linear dirac equation} also follows. Therefore, it suffices to prove \eqref{dirac: 111} under assumption \eqref{dirac: 11}.

To begin with, we use Klainerman-Sobolev inequality and the structure of the equation to obtain the decay estimates for $\phi$.
\begin{lemma}\label{dirac: decay estiamte}
Let $N_{\infty}=N_{*}-2$. We have decay estimate for $\phi$ like
\begin{align}\label{dirac: 22}
\sum_{|\alpha|\leq N_{\infty}}|\Gamma^{\alpha}\phi(t,x)|\lesssim \frac{\varepsilon}{(1+t+|x|)(1+|t-|x||)^{\frac{1}{2}}}
\end{align}
and improved decay estimate for $P(-\omega)\phi$ like
\begin{align}\label{dirac: 222}
\sum_{|\alpha|\leq N_{\infty}-1}|P(-\omega)\Gamma^{\alpha}\phi(t,x)|\lesssim \frac{\varepsilon}{(1+t+|x|)^{\frac{3}{2}}}.
\end{align}
\end{lemma}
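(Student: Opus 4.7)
The plan is to handle \eqref{dirac: 22} by a direct application of the spinor Klainerman--Sobolev inequality combined with the bootstrap, and to prove the improvement \eqref{dirac: 222} on the ``good'' null component $P(-\omega)\phi$ by exploiting an algebraic identity for the Dirac operator to produce a transport equation for $P(-\omega)\phi$ along incoming null rays, then integrating from the data.

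\emph{Proof of \eqref{dirac: 22}.} For each $|\alpha|\leq N_{\infty}=N_{*}-2$, apply Lemma \ref{dirac: KS inequality} to $\Gamma^{\alpha}\phi$. The right hand side involves $\sum_{|\beta|\leq 2}\|\Gamma^{\beta}\Gamma^{\alpha}\phi(t,\cdot)\|_{L^{2}}$, i.e.\ at most $N_{*}$ derivatives of $\phi$, which is bounded by $\varepsilon$ by the bootstrap hypothesis \eqref{dirac: 11}.

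\emph{Proof of \eqref{dirac: 222}.} Write $\gamma^{0}\mathcal{D}=\partial_{t}+\gamma^{0}\gamma^{i}\partial_{i}$, split $\partial_{i}=\omega_{i}\partial_{r}+\slashed{\partial}_{i}$ into radial and angular parts, and use $\gamma^{0}\gamma^{i}\omega_{i}=P(\omega)-P(-\omega)$ together with $P(\omega)+P(-\omega)=I$ to recognize that
\[
\gamma^{0}\mathcal{D}\phi=P(\omega)L\phi+P(-\omega)\underline{L}\phi+\gamma^{0}\gamma^{i}\slashed{\partial}_{i}\phi.
\]
Multiplying on the left by $P(-\omega)$ and using $P(-\omega)P(\omega)=0$ together with the fact that $\omega=x/|x|$ is annihilated by both $L$ and $\underline{L}$ (so $P(-\omega)$ commutes with $\underline{L}$), we obtain the transport identity
\[
\underline{L}\bigl(P(-\omega)\phi\bigr)=P(-\omega)\gamma^{0}N(\phi,\phi)-P(-\omega)\gamma^{0}\gamma^{i}\slashed{\partial}_{i}\phi.
\]
The same manipulation with $\Gamma^{\alpha}\phi$ in place of $\phi$, together with Lemma \ref{dirac: spinor null form}(ii), which ensures that $\Gamma^{\alpha}N(\phi,\phi)$ is a sum of spinor null forms acting on $\Gamma^{\beta}\phi$ with $|\beta|\leq |\alpha|\leq N_{\infty}-1$, yields the analogous equation for $P(-\omega)\Gamma^{\alpha}\phi$. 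Passing to null coordinates $(u,v)=(t-r,t+r)$, where $\underline{L}=2\partial_{u}$, and reducing by density to $\phi_{0}\in C_{0}^{\infty}$ so that finite speed of propagation forces $\Gamma^{\alpha}\phi$ to vanish at $u=-v$ for $v$ larger than the data radius, we integrate from $u=-v$ to $u=t-r$ at fixed $v$. Substituting the pointwise bound \eqref{dirac: 22} together with the angular-derivative gain $|\slashed{\partial}_{i}\Gamma^{\alpha}\phi|\lesssim r^{-1}\sum_{\Gamma\in\mathcal{R}}|\Gamma\Gamma^{\alpha}\phi|+r^{-1}|\Gamma^{\alpha}\phi|$, and splitting the $u'$-integration according to $|u'|\leq 1$ and $|u'|\geq 1$, produces the desired rate $\varepsilon(1+v)^{-3/2}=\varepsilon(1+t+r)^{-3/2}$.

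\emph{Main obstacle.} The delicate step is the $u'$-integration. The $r'^{-1}=2/(v-u')$ factor from the angular derivative becomes singular as $u'\to v$, but since we only integrate up to $u'=t-r$ we have $v-u'=2r$ at the endpoint, so the singularity is avoided whenever $r\geq 1$. One still has to verify that the product $(v-u')^{-1}(1+|u'|)^{-1/2}$ integrates over $[-v,t-r]$ to give $O(v^{-1/2})$ without any logarithmic loss, which combined with the exterior $(1+v)^{-1}$ gives the target $(1+v)^{-3/2}$; this requires separating the integral according to the sign and magnitude of $u'$. The region $r\leq 1$ is handled directly from \eqref{dirac: 22}, since there $1+|t-r|\sim 1+t+r$. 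The quadratic contribution is comparatively easy because it carries an extra factor of $\varepsilon$ and inherits null-form structure from Lemma \ref{dirac: spinor null form}(ii).
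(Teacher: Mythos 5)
Your strategy coincides with the paper's: \eqref{dirac: 22} is exactly the spinor Klainerman--Sobolev inequality plus the bootstrap bound \eqref{dirac: 11}, and \eqref{dirac: 222} is obtained from the same transport identity
\[
(\partial_{t}-\partial_{r})\bigl(P(-\omega)\Gamma^{\alpha}\phi\bigr)=-P(-\omega)\gamma^{0}\gamma^{i}(\partial_{i}-\omega_{i}\partial_{r})\Gamma^{\alpha}\phi+P(-\omega)\,\bigl(\text{commuted null-form terms}\bigr),
\]
integrated along incoming rays, with the angular derivatives traded for rotation fields at the cost of a factor $r^{-1}$ (which is also why the index drops to $N_{\infty}-1$). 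The gap is precisely in the step you flag as delicate. Integrating at fixed $v=t+r$ from $u'=-v$ up to $u'=t-r$, your claimed bound
\[
\int_{-v}^{t-r}\frac{du'}{\bigl(1+\tfrac{v-u'}{2}\bigr)\,(1+|u'|)^{1/2}}\;\lesssim\;(1+v)^{-1/2}
\]
is false whenever the endpoint $t-r$ is close to $v$, i.e.\ whenever $1\le r\ll t$: on the subrange $u'\in[v/2,\,t-r]$ one has $(1+|u'|)^{-1/2}\sim v^{-1/2}$ while $\int_{v/2}^{t-r}\bigl(1+\tfrac{v-u'}{2}\bigr)^{-1}du'\sim\log\tfrac{v}{1+r}$, so the integral is bounded \emph{below} by a quantity of size $v^{-1/2}\log\tfrac{v}{1+r}$ (for instance $\sim v^{-1/2}\log v$ at $r=2$, $t$ large). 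No splitting by the sign or magnitude of $u'$ can remove this, since it is a lower bound on the integral itself; as written your argument only yields $\varepsilon(1+t+r)^{-3/2}\log(2+t+r)$ in that regime, and your sole carve-out $r\le 1$ does not cover it.

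The repair is the region decomposition the paper uses. Outside the wave zone, i.e.\ for $|x|\le\frac12 t$ or $|x|\ge 2t$, one has $1+\bigl|t-|x|\bigr|\sim 1+t+|x|$, so \eqref{dirac: 22} already gives \eqref{dirac: 222} with no transport argument at all. For $\frac12 t\le|x|\le 2t$, integrate the transport identity only along the portion of the incoming ray lying in the wave zone, starting from its intersection with the cone $|x|=2t$, where \eqref{dirac: 22} again provides the improved rate $\varepsilon(1+t+|x|)^{-3/2}$ for the boundary term. On that portion $r'\sim t'+r'\sim v$, so the angular term is $\lesssim\varepsilon(1+v)^{-2}(1+|u'|)^{-1/2}$ and the $u'$-integral contributes at most $Cv^{1/2}$, with no logarithm; moreover $r'\gtrsim v$ there, so the $r'^{-1}$ factor is never singular and the separate treatment of $r\le1$ becomes unnecessary. (Your boundary term at $t=0$ is harmless with or without the compact-support reduction, since at $t=0$ the estimate \eqref{dirac: 22} already gives $\varepsilon(1+v)^{-3/2}$; and your treatment of the quadratic term is fine, since $\varepsilon^{2}(1+v)^{-2}\int(1+|u'|)^{-1}du'\lesssim\varepsilon^{2}(1+v)^{-2}\log(1+v)\lesssim\varepsilon^{2}(1+v)^{-3/2}$.)
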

\begin{proof}
Combining Lemma \ref{dirac: KS inequality for spinors} with the assumption \eqref{dirac: 11}, we have
\begin{align*}
\sum_{|\alpha|\leq N_{\infty}}|\Gamma^{\alpha}\phi(t,x)|\lesssim \frac{\varepsilon}{(1+t+|x|)(1+|t-|x||)^{\frac{1}{2}}},
\end{align*}
which confirms Eq. \eqref{dirac: 22}. 

In view of Lemma \ref{dirac: spinor null form}, for $|\alpha|\leq N_{\infty}-1$,
\begin{align*}
\mathcal{D}\Gamma^{\alpha}\phi=\sum_{\alpha_{1}+\alpha_{2}\leq \alpha}N_{\alpha_{1},\alpha_{2}}(\Gamma^{\alpha_{1}}\phi,\Gamma^{\alpha_{2}}\phi),
\end{align*}
where $N_{\alpha_{1},\alpha_{2}}$ are spinor null forms. In view of \eqref{dirac: 22}, 
\begin{align*}
|\mathcal{D}\Gamma^{\alpha}\phi|\lesssim \frac{\varepsilon^{2}}{(1+t+|x|)^{2}(1+|t-|x||)}.
\end{align*}
In view of identity $\partial_{i}=\omega^{i}\partial_{r}+(\partial_{i}-\omega^{i}\partial_{r})$, we have
\begin{align*}
(\partial_{t}-\partial_{r})P(-\omega)\Gamma^{\alpha}\phi=-P(-\omega)\gamma^{0}\gamma^{i}(\partial_{i}-\omega^{i}\partial_{r})\Gamma^{\alpha}\phi+P(-\omega)\mathcal{D}\Gamma^{\alpha}\phi.
\end{align*}
Therefore, 
\begin{align*}
|(\partial_{i}-\omega^{i}\partial_{r})\Gamma^{\alpha}\phi|&\lesssim \frac{1}{(1+|x|)}\big(\sum_{1\leq i<j\leq 3}|\Omega_{ij}\Gamma^{\alpha}\phi|+|\Gamma^{\alpha}\phi|\big),\\
&\lesssim \frac{\varepsilon}{(1+|x|)(1+|x|+t)(1+|t-|x||)^{\frac{1}{2}}}.
\end{align*}
Then we have for any $|\alpha|\leq N_{\infty}-1$,
\begin{align}\label{dirac: 3}
|(\partial_{t}-\partial_{r})P(-\omega)\Gamma^{\alpha}\phi|\lesssim \frac{\varepsilon}{(1+|x|)(1+|x|+t)(1+|t-|x||)^{\frac{1}{2}}},
\end{align}
This estimate will help us get better decay of $P(\omega)\phi$ by integrating along incoming rays as follows,
\begin{itemize}
\item[(i)] when $|x|\leq \frac{1}{2}t$ or $|x|\geq 2t$, \eqref{dirac: 22} already implies \eqref{dirac: 222}.
\item[(ii)] when $\frac{1}{2}t\leq|x|\leq 2t$, \eqref{dirac: 3} implies
\begin{align}\label{dirac: 4}
|(\partial_{t}-\partial_{r})P(-\omega)\Gamma^{\alpha}\phi|\lesssim \frac{\varepsilon}{(1+|x|+t)(1+|t-|x||)^{\frac{1}{2}}}.
\end{align}
The incoming light ray $\{(t+\tau,x-\frac{x}{|x|}\tau),\tau\in\mathbb{R}\}$ through $(t,x)$ intersects $|x|=\frac{1}{2}t$ at $\big(\frac{2}{3}(t+|x|),\frac{1}{3}(t+|x|)\frac{x}{|x|}\big)$ and intersects $|x|=2t$ at $\big(\frac{1}{3}(t+|x|),\frac{2}{3}(t+|x|)\frac{x}{|x|}\big)$. The integral of Eq. \eqref{dirac: 4} along this incoming light ray from $\big(\frac{2}{3}(t+|x|),\frac{1}{3}(t+|x|)\frac{x}{|x|}\big)$ to $\big(\frac{1}{3}(t+|x|),\frac{2}{3}(t+|x|)\frac{x}{|x|}\big)$ is bounded by
\begin{align*}
\frac{\varepsilon}{(1+t+|x|)^{2}}\int_{-\frac{1}{3}(t+|x|)}^{\frac{1}{3}(t+|x|)}\frac{1}{(1+|\tau|)^{\frac{1}{2}}}d\tau\lesssim \frac{\varepsilon}{(1+t+|x|)^{\frac{3}{2}}},
\end{align*}
which implies Eq. \eqref{dirac: 222}. 
\end{itemize}
\end{proof}
In view of Lemma \ref{dirac: Classical energy estimate} and Lemma \ref{dirac: Ghost weight}, we have
\begin{equation}\label{dirac: 6}
\begin{split}
\sum_{k=0}^{N_{*}}E^{k}(t^{*})&=\mathcal{E}^{N_{*}}(0)+\sum_{|\alpha_{1}|+|\alpha_{2}|\leq N_{*}}\int_{0}^{t^{*}}\int_{\mathbb{R}^{3}}N_{\alpha_{1},\alpha_{2}}(\Gamma^{\alpha_{1}}\phi,\Gamma^{\alpha_{2}}\phi)\\&\big(\left\langle\gamma^{0}e_{\alpha_{1},e_{\alpha_{2}}},\Gamma^{\alpha_{1}+\alpha_{2}}\phi\right\rangle+\left\langle\gamma^{0}\Gamma^{\alpha_{1}+\alpha_{2}}\phi,e_{\alpha_{1},\alpha_{2}}\right\rangle\big)dxdt,
\end{split}
\end{equation}
and
\begin{equation}\label{dirac: 7}
\begin{split}
\sum_{k=0}^{N_{*}}F^{k}(t^{*})&=\frac{2}{\mu}\big(\sum_{k=0}^{N_{*}}E^{k}(t^{*}\big)+\mathcal{E}^{N_{*}}(0)\big)+\sum_{|\alpha_{1}|+|\alpha_{2}|\leq N_{*}}\int_{0}^{t^{*}}\int_{\mathbb{R}^{3}}N_{\alpha_{1},\alpha_{2}}(\Gamma^{\alpha_{1}}\phi,\Gamma^{\alpha_{2}}\phi)\\&\big(\left\langle\gamma^{0}e_{\alpha_{1},e_{\alpha_{2}}},\Gamma^{\alpha_{1}+\alpha_{2}}\phi\right\rangle+\left\langle\gamma^{0}\Gamma^{\alpha_{1}+\alpha_{2}}\phi,e_{\alpha_{1},\alpha_{2}}\right\rangle\big)dxdt.
\end{split}
\end{equation}

Note that the bulk terms have similar structure and can be bounded via the following lemma.
\begin{lemma}
Under the assumption \eqref{dirac: 11}, we have
\begin{align}
\sum_{|\alpha_{1}|+|\alpha_{2}|\leq N_{*}}\int_{0}^{t^{*}}\int_{\mathbb{R}^{3}}|N_{\alpha_{1},\alpha_{2}}(\Gamma^{\alpha_{1}}\phi,\Gamma^{\alpha_{2}}\phi)||\Gamma^{\alpha_{1}+\alpha_{2}}\phi|dxdt\lesssim \varepsilon^{3}.
\end{align}
\end{lemma}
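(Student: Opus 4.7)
The plan is to exploit the spinor null structure inherited by the commuted nonlinearities $N_{\alpha_1,\alpha_2}$ so that a factor of the ``good'' projection $P(-\omega)\Gamma^{\alpha}\phi$ appears in every trilinear integrand, and then to split the spacetime integral by derivative count. First I would show that for each pair of multi-indices the bilinear form $N_{\alpha_1,\alpha_2}$ satisfies the pointwise bound $|N_{\alpha_1,\alpha_2}(X,Y)|\lesssim |P(-\omega)X|\,|Y|+|X|\,|P(-\omega)Y|$. Indeed, Lemma \ref{dirac: spinor null form}(ii) guarantees that $N_{\alpha_1,\alpha_2}$ is itself a spinor null form; a direct computation using $\gamma^0\gamma^0=I$, $\gamma^i\gamma^j\omega_i\omega_j=-I$, and the fact that $\gamma^5$ commutes with each $\gamma^0\gamma^i$ yields $P(\omega)\gamma^0 P(\omega)=0$ and $P(\omega)\gamma^0\gamma^5 P(\omega)=0$, so that $N_{\alpha_1,\alpha_2}(P(\omega)X,P(\omega)Y)=0$; the pointwise bound then follows from $I=P(\omega)+P(-\omega)$. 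By symmetry of this bound it suffices to control, for every $|\alpha_1|+|\alpha_2|\le N_{*}=6$, the trilinear integral
\[
J_{\alpha_1,\alpha_2}:=\int_0^{t^{*}}\!\!\int_{\mathbb{R}^3}|P(-\omega)\Gamma^{\alpha_1}\phi|\,|\Gamma^{\alpha_2}\phi|\,|\Gamma^{\alpha_1+\alpha_2}\phi|\,dx\,dt,
\]
the symmetric summand being handled identically after swapping $\alpha_1\leftrightarrow\alpha_2$.

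I would then split into two cases according to where the top-order derivatives sit. If $|\alpha_1|\le N_{\infty}-1=3$ (\emph{Case A}), the improved peeling estimate \eqref{dirac: 222} of Lemma \ref{dirac: decay estiamte} gives $|P(-\omega)\Gamma^{\alpha_1}\phi|\lesssim \varepsilon(1+t)^{-3/2}$, and Cauchy--Schwarz in $x$ together with the bootstrap assumption \eqref{dirac: 11} bounds
\[
J_{\alpha_1,\alpha_2}\lesssim \varepsilon\int_0^{t^{*}}(1+t)^{-3/2}\|\Gamma^{\alpha_2}\phi(t,\cdot)\|_{L^2}\|\Gamma^{\alpha_1+\alpha_2}\phi(t,\cdot)\|_{L^2}\,dt\lesssim \varepsilon\cdot\varepsilon^2\int_0^{\infty}(1+t)^{-3/2}\,dt\lesssim \varepsilon^3.
\]
Otherwise $|\alpha_1|\ge 4$, which forces $|\alpha_2|\le 2\le N_{\infty}$ (\emph{Case B}), so that the Klainerman--Sobolev inequality (Lemma \ref{dirac: KS inequality}) gives $|\Gamma^{\alpha_2}\phi|\lesssim \varepsilon(1+t+r)^{-1}(1+|t-r|)^{-1/2}$. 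Splitting
\[
|P(-\omega)\Gamma^{\alpha_1}\phi|\,|\Gamma^{\alpha_2}\phi|\,|\Gamma^{\alpha_1+\alpha_2}\phi|=\frac{|P(-\omega)\Gamma^{\alpha_1}\phi|}{(1+|t-r|)^{(1+2\mu)/2}}\cdot(1+|t-r|)^{(1+2\mu)/2}|\Gamma^{\alpha_2}\phi|\,|\Gamma^{\alpha_1+\alpha_2}\phi|
\]
and applying Cauchy--Schwarz in spacetime, the first square integrates to the ghost-weight flux $F^{|\alpha_1|}(t^{*})\le 20\varepsilon^2$ by Lemma \ref{dirac: Ghost weight}, while the second, after inserting the pointwise bound on $|\Gamma^{\alpha_2}\phi|^2$ and using the elementary inequality $(1+|t-r|)^{2\mu}\le (1+t+r)^{2\mu}$ valid for $t,r\ge 0$, reduces to
\[
\varepsilon^2\int_0^{t^{*}}(1+t)^{2\mu-2}E^{|\alpha_1+\alpha_2|}(t)\,dt\lesssim \varepsilon^4,
\]
where convergence uses $2\mu-2<-1$, i.e.\ $\mu<\tfrac{1}{2}$. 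Multiplying the two square roots gives $J_{\alpha_1,\alpha_2}\lesssim \varepsilon\cdot\varepsilon^2=\varepsilon^3$, and summation over the finitely many admissible pairs completes the lemma.

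The hard part is Case B, where the top-order derivatives live on $P(-\omega)\Gamma^{\alpha_1}\phi$, for which no direct pointwise bound is available; here one is forced to combine the ghost-weight flux $F^{|\alpha_1|}$ with Klainerman--Sobolev decay on the low-derivative factor, and the constraint $\mu\in(0,\tfrac{1}{2})$ is precisely what is needed for the resulting time integral to converge. Without the null-form cancellation $N_{\alpha_1,\alpha_2}(P(\omega)X,P(\omega)Y)=0$ the integrand would reduce to a generic cubic $|\phi|^3$, whose decay $(1+t+r)^{-3}(1+|t-r|)^{-3/2}$ fails to be integrable on $\mathbb{R}^{1+3}$ and the bootstrap could not close.
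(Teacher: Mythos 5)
Your proposal is correct and follows essentially the same strategy as the paper: exploit the inherited spinor null structure $N_{\alpha_1,\alpha_2}(P(\omega)\cdot,P(\omega)\cdot)=0$ to place a $P(-\omega)$-projected factor in each trilinear integrand, then split by derivative count, using the improved peeling estimate \eqref{dirac: 222} when the projected factor has few derivatives and combining Klainerman--Sobolev decay on the low-derivative factor with the ghost-weight flux on the projected high-derivative factor otherwise. The only cosmetic difference is that you formulate the null cancellation as a pointwise bound and reduce by symmetry, whereas the paper states the decomposition identity directly and takes the smaller index WLOG; the resulting Case A / Case B integrals coincide with the paper's cases (i) and (ii), and the constraint $\mu<\tfrac12$ enters for the same reason in both.
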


\begin{proof}
Since $N_{*}\geq 6$, 
\begin{align*}
N_{\infty}-1=(N_{*}-3)\geq \frac{1}{2}N_{*},
\end{align*}
it suffices to show
\begin{align}
\sum_{|\alpha_{1}|+|\alpha_{2}|\leq N_{*},|\alpha_{1}|\leq N_{\infty}-1}\int_{0}^{t^{*}}\int_{\mathbb{R}^{3}}|N_{\alpha_{1},\alpha_{2}}(\Gamma^{\alpha_{1}}\phi,\Gamma^{\alpha_{2}}\phi)||\Gamma^{\alpha_{1}+\alpha_{2}}\phi|dxdt\lesssim \varepsilon^{3}.
\end{align}
In view of Lemma \ref{dirac: spinor null form}, 
\begin{align*}
N_{\alpha_{1},\alpha_{2}}\big(\Gamma^{\alpha_{1}}\phi,\Gamma^{\alpha_{2}}\phi\big)&=N_{\alpha_{1},\alpha_{2}}\big(P(-\omega)\Gamma^{\alpha_{1}}\phi,P(\omega)\Gamma^{\alpha_{2}}\phi\big),\\
&+N_{\alpha_{1},\alpha_{2}}\big(P(-\omega)\Gamma^{\alpha_{1}}\phi,P(\omega)\Gamma^{\alpha_{2}}\phi\big).
\end{align*}
There are two cases,
\begin{itemize}
\item[(i)] $|\alpha_{1}|+|\alpha_{2}|\leq N_{*},|\alpha_{1}|\leq N_{\infty}-1$, we use improved decay estimate \eqref{dirac: 222} for $P(-\omega)\Gamma^{\alpha_{1}}\phi_{1}$,
\begin{align*}
&\int_{0}^{t^{*}}\int_{\mathbb{R}^{3}}|N_{\alpha_{1},\alpha_{2}}\big(P(-\omega)\Gamma^{\alpha_{1}}\phi,P(\omega)\Gamma^{\alpha_{2}}\phi\big)||\Gamma^{\alpha_{1}+\alpha_{2}}\phi|dxdt,\\
&\lesssim \int_{0}^{t^{*}}\|P(-\omega)\Gamma^{\alpha_{1}}\phi(t,\cdot)\|_{L^{\infty}}E^{k}(t)dt,\\
&\lesssim \int_{0}^{t^{*}}\frac{\varepsilon^{3}}{(1+t)^{\frac{3}{2}}}dt\lesssim \varepsilon^{3}.
\end{align*}
\item[(ii)]  $|\alpha_{1}|+|\alpha_{2}|\leq N_{*},|\alpha_{1}|\leq N_{\infty}-1$. If $|\alpha_{2}|\leq N_{\infty}-1$, it can be handled the same as the case 1. Otherwise, 
\begin{align*}
|\alpha_{2}|\geq N_{\infty}=N_{*}-2\implies |\alpha_{1}|\leq 2\leq N_{\infty},
\end{align*}
we use decay estimate \eqref{dirac: 22} for $P(\omega)\Gamma^{\alpha_{1}}\phi_{1}$ and weighted space time norm for $P(-\omega)\Gamma^{\alpha_{1}}\phi_{2}$,
\begin{align*}
&\int_{0}^{t^{*}}\int_{\mathbb{R}^{3}}|N_{\alpha_{1},\alpha_{2}}\big(P(\omega)\Gamma^{\alpha_{1}}\phi,P(-\omega)\Gamma^{\alpha_{2}}\phi\big)||\Gamma^{\alpha_{1}+\alpha_{2}}\phi|dxdt,\\
&=\int_{0}^{t^{*}}\int_{\mathbb{R}^{3}}[(1+|t-|x||)^{\frac{1}{2}+\mu}|P(\omega)\Gamma^{\alpha_{1}}\phi||\Gamma^{\alpha_{1}+\alpha_{2}}\phi|]\frac{|P(-\omega)\Gamma^{\alpha_{2}}\phi|}{(1+|t-|x||)^{\frac{1}{2}+\mu}}dxdt,\\
&\lesssim \int_{0}^{t^{*}}\int_{\mathbb{R}^{3}}\frac{\varepsilon|\Gamma^{\alpha_{1}+\alpha_{2}}\phi|}{(1+t)^{1-\mu}}\frac{|P(-\omega)\Gamma^{\alpha_{2}}\phi|}{(1+|t-|x||)^{\frac{1}{2}+\mu}}dxdt,\\
&\lesssim\big(\int_{0}^{t^{*}}\frac{\varepsilon^{2}(\sum_{k=0}^{N_{*}}E^{k}(t)}{(1+t)^{2-2\mu}}dt\big)^{\frac{1}{2}}\big(\sum_{k=0}^{N_{*}}F^{k}(t^{*})\big)^{\frac{1}{2}}\lesssim  \varepsilon^{3}.
\end{align*}
\end{itemize}
In view of Eq. \eqref{dirac: 6} and Eq. \eqref{dirac: 7}, we have
\begin{equation*}
\begin{cases}
\sum_{k=0}^{N_{*}}E^{k}(t^{*})=\varepsilon^{2}+O(\varepsilon^{3}),\\
\sum_{k=0}^{N_{*}}F^{k}(t^{*})=\frac{4}{\mu}\varepsilon^{2}+O(\varepsilon^{3}).
\end{cases}
\end{equation*}
Since $\mu=\frac{1}{4}$, we have
\begin{align*}
\sum_{k=0}^{N_{*}}E^{k}(t^{*})+\sum_{k=0}^{N_{*}}F^{k}(t^{*})\leq 17\varepsilon^{2}+O(\varepsilon^{3})\leq 18\varepsilon^{2}, \quad\forall\ \varepsilon\leq \varepsilon_{0},
\end{align*}
provided that $\varepsilon_{0}$ is sufficiently small. 
\end{proof}

\section{Radiation fields of homogenous and inhomogenous Dirac equation}
In this section, we aim to analyze the linear radiation fields of Dirac equations in two cases. For the homogeneous Dirac equation, the radiation field map is an isomorphism. For the inhomogeneous equation, the radiation field map is a bilinear map which will be crucial to deal with the nonlinear radiation fields.
\subsection{Homogenous case}
We first show that for any solution of homogenous Dirac equation with smooth and compactly supported initial data, there exists an associated profile pair.
\begin{lemma}[Existence and uniqueness of the smooth profile pair]\label{dirac: dirac profile}
Let $\phi(t,x)$ be solution of Eq. \eqref{dirac: dirac equation} with vanishing source term $\Phi\equiv0$. Then there exist two unique smooth functions $\xi_{1}(\rho,s,\omega),\xi_{2}(\rho,s,\omega)\in C^{\infty}([0,\infty)\times\mathbb{R}\times\mathbb{S}^{2},\mathbb{C}^{4})$ with compact support in the second variable such that 
\begin{align*}
	\phi(t,x)=\frac{1}{|x|}\xi_{1}(\frac{1}{|x|},t-|x|,\frac{x}{|x|})+\frac{1}{|x|^{2}}\xi_{2}(\frac{1}{|x|},t-|x|,\frac{x}{|x|})\quad t\geq 0,|x|>1
\end{align*}
Furthermore, $\xi_{1}(\rho,s,\omega)$ and $\xi_{2}(\rho,s,\omega)$ satisfy the following identities
\begin{align*}
	P(\omega)\xi_{1}(q,s,\omega)=\xi_{1}(q,s,\omega),\quad P(-\omega)\xi_{2}(q,s,\omega)=\xi_{2}(q,s,\omega).
\end{align*}
\end{lemma}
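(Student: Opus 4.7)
The plan is to feed Lemma \ref{dirac: smooth profiles for dirac} into a null-frame decomposition of $\mathcal{D}$, then split the resulting profile into its $P(\omega)$ and $P(-\omega)$ pieces. Concretely, I would first apply Lemma \ref{dirac: smooth profiles for dirac} to produce a smooth spinor-valued function $\xi(\rho,s,\omega)\in C^{\infty}([0,\infty)\times\mathbb{R}\times\mathbb{S}^{2},\mathbb{C}^{4})$, compactly supported in the $s$-variable, such that
\[
\phi(t,x)=\rho\,\xi(\rho,s,\omega),\qquad \rho=\tfrac{1}{|x|},\ s=t-|x|,\ \omega=\tfrac{x}{|x|},
\]
on the region $t\ge 0,\ |x|>1$. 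This gives the existence of a single profile; the task is then to peel off its two leading pieces.

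Next I would decompose the Dirac operator in the null frame. Using $2\gamma^{0}P(\pm\omega)=\gamma^{0}\pm\gamma^{i}\omega_{i}$, one has the identity
\[
\mathcal{D}=\gamma^{0}P(\omega)L+\gamma^{0}P(-\omega)\underline{L}+\gamma^{i}(\partial_{i}-\omega^{i}\partial_{r}).
\]
A direct computation from $L\rho=-\rho^{2}$, $Ls=0$, $\underline{L}\rho=\rho^{2}$, $\underline{L}s=2$, and $(\partial_{i}-\omega^{i}\partial_{r})(\rho\xi)=\rho^{2}(\delta_{ik}-\omega^{i}\omega^{k})\partial_{\omega^{k}}\xi$ gives
\[
L(\rho\xi)=O(\rho^{2}),\quad \underline{L}(\rho\xi)=2\rho\,\partial_{s}\xi+O(\rho^{2}),\quad \gamma^{i}(\partial_{i}-\omega^{i}\partial_{r})(\rho\xi)=O(\rho^{2}).
\]
Hence $\mathcal{D}\phi=0$ reads $2\rho\gamma^{0}P(-\omega)\partial_{s}\xi+O(\rho^{2})=0$, so at $\rho=0$ we obtain $P(-\omega)\partial_{s}\xi(0,s,\omega)=0$. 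Because $\xi$ has compact support in $s$, integrating from $+\infty$ yields the crucial vanishing
\[
P(-\omega)\,\xi(0,s,\omega)=0.
\]

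With this vanishing in hand, I define
\[
\xi_{1}(\rho,s,\omega):=P(\omega)\xi(\rho,s,\omega),\qquad \xi_{2}(\rho,s,\omega):=\int_{0}^{1}\partial_{\rho}\bigl(P(-\omega)\xi\bigr)(t\rho,s,\omega)\,dt,
\]
so that $P(-\omega)\xi(\rho,s,\omega)=\rho\,\xi_{2}(\rho,s,\omega)$ by Taylor's theorem, and $\xi_{1},\xi_{2}$ inherit smoothness and compact support in $s$ from $\xi$. The decomposition $\xi=\xi_{1}+\rho\xi_{2}$ multiplied by $\rho$ gives $\phi=\rho\xi_{1}+\rho^{2}\xi_{2}$. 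The projection identity $P(\omega)\xi_{1}=\xi_{1}$ is immediate from $P(\omega)^{2}=P(\omega)$; for $\xi_{2}$, apply $P(-\omega)$ to $P(-\omega)\xi=\rho\xi_{2}$ and use $P(-\omega)^{2}=P(-\omega)$ to get $\rho\xi_{2}=\rho P(-\omega)\xi_{2}$, which, by continuity at $\rho=0$, extends to $P(-\omega)\xi_{2}=\xi_{2}$ everywhere.

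For uniqueness, suppose $\rho\xi_{1}+\rho^{2}\xi_{2}=\rho\tilde{\xi}_{1}+\rho^{2}\tilde{\xi}_{2}$, i.e. $(\xi_{1}-\tilde{\xi}_{1})+\rho(\xi_{2}-\tilde{\xi}_{2})=0$. Applying $P(\omega)$ kills the second term (since $\xi_{2},\tilde{\xi}_{2}\in\mathrm{range}\,P(-\omega)$ and $P(\omega)P(-\omega)=0$) and fixes the first, giving $\xi_{1}=\tilde{\xi}_{1}$; applying $P(-\omega)$ then yields $\rho(\xi_{2}-\tilde{\xi}_{2})=0$ for $\rho>0$, and continuity extends the equality to $\rho=0$. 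The main delicate point is step two, the null-frame expansion yielding $P(-\omega)\xi|_{\rho=0}=0$; the rest reduces to Taylor's theorem with remainder and the orthogonality relations of Lemma \ref{dirac: properties of projection operators}.
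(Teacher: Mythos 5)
Your proof is correct and reaches the same conclusion as the paper, but it takes a genuinely different route at the one delicate step, namely establishing the vanishing $P(-\omega)\xi(0,s,\omega)=0$. The paper obtains this by invoking Lemma~\ref{dirac: decay estiamte} (the improved peeling estimate $|P(-\omega)\Gamma^{\alpha}\phi|\lesssim (1+t+|x|)^{-3/2}$), which is somewhat awkward there because that lemma is stated for the nonlinear flow under the bootstrap hypothesis, and one has to observe that the underlying argument survives verbatim in the linear case $\mathcal{D}\phi=0$. You instead expand the Dirac operator directly in the null frame, writing $\mathcal{D}=\gamma^{0}P(\omega)L+\gamma^{0}P(-\omega)\underline{L}+\gamma^{i}(\partial_{i}-\omega^{i}\partial_{r})$ (your identity $2\gamma^{0}P(\pm\omega)=\gamma^{0}\pm\gamma^{i}\omega_{i}$ and the null-derivative computation $L\rho=-\rho^{2}$, $\underline{L}s=2$, etc.\ are all correct), feed in the Friedlander profile $\phi=\rho\,\xi$, and read off $2\rho\,\gamma^{0}P(-\omega)\partial_{s}\xi+O(\rho^{2})=0$, whence $P(-\omega)\partial_{s}\xi(0,\cdot,\cdot)=0$ and hence $P(-\omega)\xi(0,\cdot,\cdot)=0$ by integration from the compactly supported end. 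This is more self-contained and explains intrinsically why the peeling hierarchy for the Dirac spinor arises, whereas the paper's citation keeps the lemma short at the price of leaning on a nonlinear intermediary. After this, your definitions of $\xi_{1},\xi_{2}$ via Taylor's theorem with integral remainder, the projection identities $P(\omega)\xi_{1}=\xi_{1}$, $P(-\omega)\xi_{2}=\xi_{2}$, and the uniqueness argument (project, cancel the $\rho$-factor for $\rho>0$, extend by continuity to $\rho=0$) coincide with the paper's proof, and indeed repair a small typo in it (the paper writes $\xi_{1}$ where $\xi_{2}$ is meant in the uniqueness clause).
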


\begin{proof}
In view of Lemma \ref{dirac: smooth profiles for dirac}, there exits $\xi\in C^{\infty}([0,\infty)\times\mathbb{R}\times\mathbb{S}^{2},\mathbb{C}^{4})$ such that:
\begin{align*}
\xi(\rho,s,\omega)=\frac{1}{\rho}u(\frac{1}{\rho}\omega,\frac{1}{\rho}+\tau).
\end{align*}
Therefore, we have
\begin{align*}
\mathcal{F}^{+}(\phi_{0},0)(s,\omega)=\lim_{r\rightarrow +\infty}r\phi(r+s,r\omega)=\xi(0,s,\omega).
\end{align*}
In view of Lemma \ref{dirac: decay estiamte}, we have 
\begin{align*}
\lim_{r\rightarrow +\infty}rP(-\omega)\phi(r+s,r\omega)=0\implies P(-\omega)\xi(0,s,\omega)=0,
\end{align*}
we can write $P(-\omega)\xi(\rho,s,\omega)$ as follows
\begin{align*}
P(-\omega)\xi(\rho,s,\omega)&=P(-\omega)\xi(0,s,\omega)+\rho\int_{0}^{1}P(-\omega)(\partial_{\rho}\xi)(\rho \tau,s,\omega)d\tau,\\
&=\rho P(-\omega)\int_{0}^{1}(\partial_{\rho}\xi)(\rho \tau,s,\omega)d\tau.
\end{align*}
Therefore, $\xi_{1}$ and $\xi_{2}$ can be given by
\begin{align*}
\xi_{1}(\rho,s,\omega):=P(\omega)\xi(\rho,s,\omega),\quad \xi_{2}(\rho,s,\omega):=P(-\omega)\int_{0}^{1}(\partial_{\rho}\xi)(\rho \tau,s,\omega)d\tau.
\end{align*}
To prove uniqueness, it suffices to note that $P(\omega)\phi$ coincides with $\xi_{1}$ on a dense subset  and $P(-\omega)\phi$ coincides with $\xi_{1}$ on a dense subset.
\end{proof}

\begin{lemma}\label{dirac: 0-order energy}
Let $\phi(t,x)$ be solution of Eq. \eqref{dirac: dirac equation} with vanishing source term $\Phi\equiv0$. Then the radiation field 
\begin{align*}
\mathcal{F}_{Dirac}^{+}(\phi_{0},0)(s,\omega)=\lim_{r\rightarrow +\infty}r\phi(r+s,r\omega),
\end{align*}
exists. Furthermore, we have 
\begin{align*}
\mathcal{F}_{Dirac}^{+}(\phi_{0},0)(s,\omega)=P(\omega)\mathcal{F}^{+}(\phi_{0},0)(s,\omega),
\end{align*}
and
\begin{align*}
\int_{\mathbb{R}}\int_{\mathbb{S}^{2}}|\mathcal{F}^{+}(\phi_{0},0)|^{2}(s,\omega)d\omega ds=\int_{\mathbb{R}^{3}}|\phi_{0}|^{2}(x)dx.
\end{align*}
\end{lemma}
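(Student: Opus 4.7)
The plan is to deduce all three assertions from the smooth profile decomposition of Lemma \ref{dirac: dirac profile}, combined with the strong Huygens principle and the charge conservation law of Lemma \ref{dirac: Classical energy estimate}. Concretely, Lemma \ref{dirac: dirac profile} yields smooth profiles $\xi_{1}, \xi_{2} \in C^{\infty}([0,\infty)\times\mathbb{R}\times\mathbb{S}^{2},\mathbb{C}^{4})$, compactly supported in their second variable, satisfying
\[
r\phi(r+s, r\omega) = \xi_{1}(1/r, s, \omega) + \tfrac{1}{r}\xi_{2}(1/r, s, \omega), \qquad r > 1.
\]
Sending $r \to \infty$ shows the radiation field exists and equals $\xi_{1}(0, s, \omega)$, while $P(\omega)\xi_{1} = \xi_{1}$ immediately gives $\mathcal{F}^{+}_{Dirac}(\phi_{0},0) = P(\omega)\mathcal{F}^{+}_{Dirac}(\phi_{0},0)$.

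For the energy identity, the key preliminary observation is that $P(\omega)$ is an \emph{orthogonal} projection on $\mathbb{C}^{4}$: the Hermitian conjugate of $\gamma^{0}\gamma^{i}\omega_{i}$ equals itself because $\gamma^{0}$ is Hermitian while the $\gamma^{i}$ are anti-Hermitian and anticommute with $\gamma^{0}$. Together with $P(\omega)P(-\omega) = 0$ from Lemma \ref{dirac: properties of projection operators}, this makes the splitting $\phi = P(\omega)\phi + P(-\omega)\phi$ pointwise orthogonal, so no cross term survives in
\[
|\phi(r+s, r\omega)|^{2} = \tfrac{1}{r^{2}}|\xi_{1}(1/r, s, \omega)|^{2} + \tfrac{1}{r^{4}}|\xi_{2}(1/r, s, \omega)|^{2}.
\]
Moreover, since $\mathcal{D}^{2} = -\Box$ and $\partial_{t}\phi(0,\cdot) = -\gamma^{0}\gamma^{i}\partial_{i}\phi_{0}$ inherits the support of $\phi_{0}$ in some ball $B(0, R_{0})$, the strong Huygens principle confines $\operatorname{supp}\phi(t,\cdot) \subset \{t-R_{0} \leq |x| \leq t+R_{0}\}$ for all $t \geq R_{0}$; correspondingly the profiles are supported in $\{|s| \leq R_{0}\}$.

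Charge conservation (Lemma \ref{dirac: Classical energy estimate} with $\Phi \equiv 0$) yields $\int_{\mathbb{R}^{3}}|\phi(t,x)|^{2}dx = \int_{\mathbb{R}^{3}}|\phi_{0}(x)|^{2}dx$ for every $t$. For $t > R_{0}+1$, passing to polar coordinates and substituting $s = t - r$ recasts this identity as
\[
\int_{\mathbb{R}^{3}}|\phi_{0}|^{2}dx = \int_{-R_{0}}^{R_{0}}\int_{\mathbb{S}^{2}}\Big(\bigl|\xi_{1}(\tfrac{1}{t-s}, s, \omega)\bigr|^{2} + \tfrac{1}{(t-s)^{2}}\bigl|\xi_{2}(\tfrac{1}{t-s}, s, \omega)\bigr|^{2}\Big)\, d\omega\, ds.
\]
Letting $t \to \infty$ and applying dominated convergence (using continuity of $\xi_{1}$ at $\rho = 0$ together with the compact $s$-support of both profiles) identifies the right-hand side with $\int_{\mathbb{R}}\int_{\mathbb{S}^{2}}|\mathcal{F}^{+}_{Dirac}(\phi_{0},0)|^{2}\, d\omega\, ds$, completing the proof. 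No genuine obstacle is anticipated: the only delicate points are the Hermiticity of $P(\omega)$ (so that the cross term vanishes) and the use of strong Huygens to keep the $s$-integration bounded so that dominated convergence applies.
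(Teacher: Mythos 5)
Your proof is correct, but it takes a genuinely different route from the paper. The paper integrates the charge current $J^{\mu}=\langle\gamma^{0}\gamma^{\mu}\phi,\phi\rangle$ over the backward solid cone $D^{-}_{(t_{0},0)}\cap\{0\le t\le t_{0}\}$ (Lemma~\ref{dirac: conservation2 incoming}), obtaining the single-cone flux identity
\begin{equation*}
2\int_{|x|\le t_{0}}|P(\omega)\phi|^{2}(t_{0}-|x|,x)\,dx=\int_{|x|\le t_{0}}|\phi_{0}|^{2}\,dx,
\end{equation*}
and then sends $t_{0}\to\infty$; the projection $P(\omega)$ appears there automatically as the natural flux density through the null hypersurface. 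You instead take the ordinary constant-time charge conservation of Lemma~\ref{dirac: Classical energy estimate}, substitute the profile decomposition of Lemma~\ref{dirac: dirac profile} on the slice $\Sigma_{t}$, and let $t\to\infty$; to split $|\phi|^{2}=|P(\omega)\phi|^{2}+|P(-\omega)\phi|^{2}$ pointwise you must explicitly check that $\gamma^{0}\gamma^{i}\omega_{i}$ is Hermitian so that $P(\omega)$ is an orthogonal projection, and you need the strong Huygens support control to justify dominated convergence. Both arguments in fact rest on the same two ingredients (orthogonality of the $P(\pm\omega)$ split, and compact $s$-support of the profile), but the paper gets the projection factor from the geometry of the null cone flux while you get it from the algebraic decomposition $\xi_{1}/r+\xi_{2}/r^{2}$ with $P(\omega)\xi_{1}=\xi_{1}$, $P(-\omega)\xi_{2}=\xi_{2}$. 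You make the Hermiticity of $P(\omega)$ explicit, which the paper leaves implicit in the derivation of Lemma~\ref{dirac: conservation2 incoming}; that is a useful clarification.
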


\begin{proof}
In view of Eq. \eqref{dirac: conservation2 incoming} with $\Phi\equiv0$ and $t_{1}=0$, we have
\begin{align}\label{asd}
2\int_{|x|\leq t_{0}}|P(\omega)\phi|^{2}(t_{0}-|x|,x)dx&=\int_{|x|\leq t_{0}}|\phi|^{2}(0,x)dx.
\end{align}
In $(s,\omega)$ coordinate, 
\begin{align*}
x=\frac{t_{0}-s}{2}, s\in [-t_{0},t_{0}],\omega\in\mathbb{S}^{2},
\end{align*}
the Eq. \eqref{asd} is equivalent to
\begin{align*}
\int_{-t_{0}}^{t_{0}}\int_{\mathbb{S}^{2}}|P(\omega)\phi|^{2}(\frac{t_{0}+s}{2},\frac{t_{0}-s}{2}\omega)(\frac{t_{0}-s}{2})^{2}dsd\omega=\int_{|x|\leq t_{0}}|\phi|^{2}(0,x)dx.
\end{align*}
In view of Lemma \ref{dirac: dirac profile}, we can obtain
\begin{align*}
\mathcal{F}^{+}(\phi_{0},0)(s,\omega)=\lim_{r\rightarrow +\infty}r\phi(r+s,r\omega)=\lim_{r\rightarrow +\infty}rP(\omega)\phi(r+s,r\omega).
\end{align*}
By bounded convergence theorem with $t_{0}\rightarrow+\infty$, we can obtain
\begin{align*}
\int_{\mathbb{R}}\int_{\mathbb{S}^{2}}|\mathcal{F}^{+}(\phi_{0},0)|^{2}(s,\omega)d\omega ds=\int_{\mathbb{R}^{3}}|\phi|^{2}(0,x)dx=\int_{\mathbb{R}^{3}}|\phi_{0}|^{2}(x)dx.
\end{align*}
\end{proof}

\begin{lemma}\label{dirac: Dirac-relation}
	Let $\phi(t,x)$ be solution of Eq. \eqref{dirac: dirac equation} with vanishing source term $\Phi\equiv0$, 
	\begin{align*}
		&\mathcal{F}^{+}_{Dirac}\big((\partial_{t}\phi)(0,\cdot),0\big)=\partial_{s}\mathcal{F}^{+}_{Dirac}\big(\phi(0,\cdot),0\big),\\
		&\mathcal{F}^{+}_{Dirac}\big((\partial_{i}\phi)(0,\cdot),0\big)=-\omega^{i}\partial_{s}\mathcal{F}^{+}_{Dirac}\big(\phi(0,\cdot),0\big),\\
		&\mathcal{F}^{+}_{Dirac}\big((\Omega_{ij}\phi)(0,\cdot),0\big)=(\omega^{i}\partial_{\omega^{j}}-\omega^{j}\partial_{\omega^{i}}-\frac{1}{2}\gamma^{i}\gamma^{j})\mathcal{F}^{+}_{Dirac}\big(\phi(0,\cdot),0\big),\\
		&\mathcal{F}^{+}_{Dirac}\big((\Omega_{0i}\phi)(0,\cdot),0\big)=(-\omega^{i}s\partial_{s}-\omega^{i}I+\partial_{\omega^{i}}-\frac{1}{2}\gamma^{0}\gamma^{i})\mathcal{F}^{+}_{Dirac}(\phi,0),\\
		&\mathcal{F}^{+}_{Dirac}\big((S\phi)(0,\cdot),0\big)=(s\partial_{s}-I)\mathcal{F}^{+}_{Dirac}(\phi,0).
	\end{align*}
\end{lemma}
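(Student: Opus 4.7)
The plan is to combine the asymptotic expansion from Lemma \ref{dirac: dirac profile} with the fact that the spinor-modified vector fields in $\mathcal{A}$ commute with the Dirac operator $\mathcal{D}$, treating each of the five identities in turn.

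For the translations $\partial_t$ and $\partial_i$: since these commute with $\mathcal{D}$, the function $\partial_\mu\phi$ is itself a solution of $\mathcal{D}\psi=0$ with initial data $(\partial_\mu\phi)(0,\cdot)$, so by uniqueness of the Cauchy problem the radiation field $\mathcal{F}^+_{Dirac}((\partial_\mu\phi)(0,\cdot),0)$ is exactly $\lim_{r\to\infty} r(\partial_\mu\phi)(s+r,r\omega)$. Plugging the expansion $\phi(t,r\omega)=\frac{1}{r}\xi_1(\frac{1}{r},t-r,\omega)+\frac{1}{r^2}\xi_2(\frac{1}{r},t-r,\omega)$ in and keeping the leading $1/r$ contributions gives $\partial_s\xi_1(0,s,\omega)$ for $\partial_t$ and $-\omega^i\partial_s\xi_1(0,s,\omega)$ for $\partial_i$, which via $\xi_1(0,s,\omega)=\mathcal{F}^+_{Dirac}(\phi_0,0)(s,\omega)$ produces the first two identities.

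For the rotations, boosts, and scaling, the same scheme applies with $\Gamma$ replaced by its modified counterpart $\tilde\Gamma\in\mathcal{A}$ (which does commute with $\mathcal{D}$): $\tilde R_{ij}=\Omega_{ij}-\tfrac{1}{2}\gamma^i\gamma^j$, $\tilde L_{0i}=\Omega_{0i}-\tfrac{1}{2}\gamma^0\gamma^i$, and $S$ itself. For each, the limit $\lim r\tilde\Gamma\phi(s+r,r\omega)$ is computed by direct substitution of the expansion. The key simplifying fact for the rotations is that when one writes out $r\omega^i\partial_j\phi-r\omega^j\partial_i\phi$ along the outgoing cone, the leading $\partial_s\xi_1$ pieces cancel (they are symmetric in $i\leftrightarrow j$ against an antisymmetric prefactor), leaving only the angular derivatives $\omega^i\partial_{\omega^j}-\omega^j\partial_{\omega^i}$ acting on $\xi_1(0,s,\omega)$; the extra $-\tfrac{1}{2}\gamma^i\gamma^j$ piece then arises from the spin modification together with the linearity of the map $\phi_0\mapsto\mathcal{F}^+_{Dirac}(\phi_0,0)$. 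The scaling case is direct: $(t\partial_t+r\partial_r)\phi$ evaluated at $t=s+r$ produces the action $(s\partial_s-I)$ on $\xi_1$ at leading order in $r$.

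The main technical obstacle is the Lorentz boost case. On the outgoing cone $t=s+r$, the vector field $\Omega_{0i}=t\partial_i+x^i\partial_t$ becomes $(s+r)\partial_i+r\omega^i\partial_t$, which formally contains $O(r\cdot\partial\phi)$ contributions that would blow up when multiplied by $r$ and sent to infinity. The cancellation of these leading terms relies on the null-structure identity $(\partial_t+\omega^k\partial_k)\phi=O(r^{-2})$ along the cone, itself a consequence of the Dirac equation together with the improved peeling estimate for $P(-\omega)\phi$ proved in Lemma \ref{dirac: decay estiamte}. Only after this cancellation can one track the remaining subleading pieces---including the contribution of $\xi_2$ through its $s$-derivative and the angular derivative coming from $(s+r)\partial_i\phi$---to obtain the combination $-\omega^i s\partial_s-\omega^i I+\partial_{\omega^i}$, which together with the spin modification $-\tfrac{1}{2}\gamma^0\gamma^i$ yields the stated identity.
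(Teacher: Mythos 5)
Your overall strategy coincides with the paper's: invoke the smooth profile expansion of Lemma \ref{dirac: dirac profile}, use that the spinor-modified generators in $\mathcal{A}$ commute with $\mathcal{D}$, and read off the coefficient of $r^{-1}$ after applying each vector field. For the translations, rotations and scaling your description is essentially the paper's proof.

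Your treatment of the boost case, however, misattributes the mechanism of the cancellation. You claim that the $O(r)$ divergence of $\Omega_{0i}=(s+r)\partial_i+r\omega^i\partial_t$ along the cone is tamed by invoking a null-structure identity $(\partial_t+\omega^k\partial_k)\phi=O(r^{-2})$ that you derive from the improved peeling estimate of Lemma \ref{dirac: decay estiamte}. This is not what happens, and it also introduces a circularity: that lemma is established in Section 5 under a bootstrap hypothesis for the \emph{nonlinear} equation with small data, so it is not available (and not needed) for the free Dirac equation treated here. Once the profile expansion $\phi=\frac{1}{|x|}\xi_1(\frac{1}{|x|},t-|x|,\omega)+\frac{1}{|x|^2}\xi_2(\cdots)$ is in hand, the cancellation of the would-be divergent terms is a purely algebraic calculus fact: in the coordinates $(\rho,s,\omega)=(1/r,t-r,\omega)$ one has $\partial_i=\omega^i(-\rho^2\partial_\rho-\partial_s)+\rho\,\partial_{\omega^i}$ and $\partial_t=\partial_s$, so the $r^0$-order pieces of $(s+r)\partial_i\phi$ and $r\omega^i\partial_t\phi$ are $-\omega^i\partial_s\xi_1$ and $+\omega^i\partial_s\xi_1$ respectively and cancel identically, with no PDE input beyond the existence of the profile. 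Relatedly, your assertion that $\xi_2$ contributes to the limit ``through its $s$-derivative'' is incorrect: the $\xi_2$ pieces arising from $(s+r)\partial_i$ and $r\omega^i\partial_t$ also cancel at order $1/r$, and $\xi_2$ drops out of the final answer entirely, as in the paper's computation.
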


\begin{proof}
In view of Lemma \ref{dirac: dirac profile}, there exist unique two smooth functions $\xi_{1}(\rho,s,\omega),\xi_{2}(\rho,s,\omega)\in C^{\infty}([0,\infty)\times\mathbb{R}\times\mathbb{S}^{2},\mathbb{C}^{4})$ with compact support in the second variable such that 
\begin{align*}
	\phi(t,x)=\frac{1}{|x|}\xi_{1}(\frac{1}{|x|},t-|x|,\frac{x}{|x|})+\frac{1}{|x|^{2}}\xi_{2}(\frac{1}{|x|},t-|x|,\frac{x}{|x|})\quad t\geq 0,|x|>1.
\end{align*}
Since $\mathcal{D}\Gamma \phi=0$ for all $\Gamma\in\mathcal{A}$, we can calculate radiation fields of $\Gamma\phi$ as follows
\begin{itemize}
\item[(i)] $\Gamma=\partial_{t}$
\begin{align*}
\partial_{t}\phi(t,x)=\frac{1}{|x|}\partial_{s}\xi_{1}+O(\frac{1}{|x|^{2}}),
\end{align*}
Therefore, we have
\begin{align*}
\mathcal{F}^{+}_{Dirac}\big((\partial_{t}\phi)(0,\cdot),0)(s,\omega\big)=\partial_{s}\xi_{1}(0,s,\omega).
\end{align*}

\item[(i)] $\Gamma=\partial_{i}$
\begin{align*}
\partial_{i}\phi(t,x)=-\frac{x^{i}}{|x|^{2}}\partial_{s}\xi_{1}+O(\frac{1}{|x|^{2}}).
\end{align*}
Therefore, we have
\begin{align*}
\mathcal{F}^{+}_{Dirac}\big((\partial_{i}\phi)(0,\cdot),0)(s,\omega\big)=-\omega^{i}\partial_{s}\xi_{1}(0,s,\omega).
\end{align*}

\item[(iii)] $\Gamma=\Omega_{ij}$
 \begin{align*}
    	\Omega_{ij}\phi(t,x)=\frac{x^{i}}{|x|^{2}}\partial_{\omega^{j}}\xi_{1}-\frac{x^{j}}{|x|^{2}}\partial_{\omega^{i}}\xi_{1}-\frac{1}{2|x|}\gamma^{i}\gamma^{j}\xi_{1}+O(\frac{1}{|x|^{2}}).
    \end{align*}
Therefore, we have
\begin{align*}
\mathcal{F}^{+}_{Dirac}\big((\partial_{t}\phi)(0,\cdot),0\big)(s,\omega)=(\omega^{i}\partial_{\omega^{j}}-\omega^{j}\partial_{\omega^{i}}-\frac{1}{2}\gamma^{i}\gamma^{j})\xi_{1}(0,s,\omega).
\end{align*}

\item[(iv)] $\Gamma=\Omega_{0i}$
\begin{align*}
\hat{\Omega}_{0i}\phi(t,x)=\frac{1}{|x|}(-\frac{x^{i}}{|x|}s\partial_{s}F_{1}-\frac{x^{i}}{|x|}F_{1}+\partial_{\omega^{i}}F_{1}-\frac{1}{2}\gamma^{0}\gamma^{i}F_{1})+O(\frac{1}{|x|^{2}}).
\end{align*}
Therefore, we have
\begin{align*}
\mathcal{F}^{+}_{Dirac}\big((\Omega_{0i}\phi)(0,\cdot),0\big)(s,\omega)=(-\omega^{i}s\partial_{s}-\omega^{i}I+\partial_{\omega^{i}}-\frac{1}{2}\gamma^{0}\gamma^{i})\xi_{1}(0,s,\omega).
\end{align*}

\item[(v)] $\Gamma=S$
\begin{align*}
S\phi(t,x)=\frac{1}{|x|}(s\partial_{s}\xi_{1}-\xi_{1})+O(\frac{1}{|x|^{2}}).
\end{align*}
Therefore, we have
\begin{align*}
\mathcal{F}^{+}_{Dirac}\big((S\phi)(0,\cdot),0\big)(s,\omega)=(s\partial_{s}-I)\xi_{1}(0,s,\omega).
\end{align*}
\end{itemize}
\end{proof}

\begin{lemma}\label{dirac: embedding}
The radiation field map
\begin{align*}
\mathcal{F}_{Dirac}^{+}:&\ \mathcal{X}\rightarrow \mathcal{Y}^{+}\\
                                       & \phi_{0}\mapsto \mathcal{F}^{+}(\phi_{0},0),
\end{align*}
is a well defined embedding.
\end{lemma}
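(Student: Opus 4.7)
The plan is to verify the two conditions of a bounded linear embedding separately: first show $\mathcal{F}^{+}_{Dirac}(\cdot,0)$ is a bounded map $\mathcal{X}\to\mathcal{Y}^{+}$ on the dense subspace $C_{0}^{\infty}(\mathbb{R}^{3},\mathbb{C}^{4})$, extend by continuity, and then read off injectivity from Lemma \ref{dirac: 0-order energy}. For $\phi_{0}\in C_{0}^{\infty}(\mathbb{R}^{3},\mathbb{C}^{4})$ we already know from Lemma \ref{dirac: dirac profile} and Lemma \ref{dirac: 0-order energy} that $\mathcal{F}^{+}_{Dirac}(\phi_{0},0)$ is smooth, compactly supported in $s$, and annihilated by $P(-\omega)$, hence lies in $E^{+}$; what remains is a quantitative control of each derivative $\hat{\Gamma}^{\alpha}\mathcal{F}^{+}_{Dirac}(\phi_{0},0)$ in $L^{2}(\mathbb{R}\times\mathbb{S}^{2})$.

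The central step is the inductive claim that for every composition $\hat{\Gamma}^{\alpha}$ of vector fields from $\hat{\mathcal{A}}$ with $|\alpha|\leq 6$, one has
\[
\hat{\Gamma}^{\alpha}\mathcal{F}^{+}_{Dirac}(\phi_{0},0)=\mathcal{F}^{+}_{Dirac}\!\bigl(\Psi_{\alpha}(\phi_{0}),0\bigr),\qquad \Psi_{\alpha}(\phi_{0}):=\sum_{|\beta|\leq|\alpha|}C_{\alpha,\beta}\,\Gamma^{\beta}\phi_{0},
\]
with coefficients $C_{\alpha,\beta}$ that are constant and at worst matrix-valued (built from $\omega^{i}I$ and products of gamma matrices), and with time derivatives on the right-hand side eliminated via the Dirac equation $\partial_{t}\phi_{0}=-\gamma^{0}\gamma^{j}\partial_{j}\phi_{0}$. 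The base case $|\alpha|=1$ is a direct rearrangement of Lemma \ref{dirac: Dirac-relation}: each entry of $\hat{\mathcal{A}}$ is matched with the corresponding entry of $\mathcal{A}$ (the scaling case $s\partial_{s}\mathcal{F}^{+}_{Dirac}(\phi_{0},0)=\mathcal{F}^{+}_{Dirac}((S+I)\phi_{0},0)$ simply contributes an extra $|\beta|=0$ term to $\Psi_{\alpha}$). The inductive step writes $\hat{\Gamma}^{\alpha}=\hat{\Gamma}\circ\hat{\Gamma}^{\alpha'}$, uses the hypothesis to rewrite $\hat{\Gamma}^{\alpha'}\mathcal{F}^{+}_{Dirac}(\phi_{0},0)$ as $\mathcal{F}^{+}_{Dirac}(\Psi_{\alpha'}(\phi_{0}),0)$, and then applies the base case with $\Psi_{\alpha'}(\phi_{0})\in C_{0}^{\infty}$ in place of $\phi_{0}$; linearity of $\mathcal{F}^{+}_{Dirac}(\cdot,0)$ in the first slot keeps the resulting expression of the same form.

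Combining this with Lemma \ref{dirac: 0-order energy} gives the $L^{2}$ identity $\|\hat{\Gamma}^{\alpha}\mathcal{F}^{+}_{Dirac}(\phi_{0},0)\|_{L^{2}(\mathbb{R}\times\mathbb{S}^{2})}=\|\Psi_{\alpha}(\phi_{0})\|_{L^{2}(\mathbb{R}^{3})}\lesssim\sum_{|\beta|\leq|\alpha|}\|\Gamma^{\beta}\phi_{0}\|_{L^{2}(\mathbb{R}^{3})}$, which after summing over $|\alpha|\leq 6$ and $\hat{\Gamma}\in\hat{\mathcal{A}}$ yields $\|\mathcal{F}^{+}_{Dirac}(\phi_{0},0)\|_{\mathcal{Y}^{+}}\lesssim\|\phi_{0}\|_{\mathcal{X}}$. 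The map then extends uniquely to a bounded linear operator $\mathcal{X}\to\mathcal{Y}^{+}$, whose image lies in $\mathcal{Y}^{+}$ as the norm limit of elements of $E^{+}$. Injectivity follows because the zeroth-order case of Lemma \ref{dirac: 0-order energy} is an $L^{2}$ isometry that persists on $\mathcal{X}$ by continuity, so the kernel is trivial. The main obstacle I anticipate is the bookkeeping in the inductive step: the vector fields in $\hat{\mathcal{A}}$ contain zeroth-order multiplicative ingredients (the $\omega^{i}I$ and $\gamma$-matrix pieces inside the boost and rotation entries, together with the identity correction in the scaling relation), and composing them generates cross terms that must be re-interpreted as $\mathcal{F}^{+}_{Dirac}$ applied to a new initial datum; this is what the intertwining of $\mathcal{A}$ with $\hat{\mathcal{A}}$ developed in Section~3 is designed to do, so the recursion closes within the advertised class.
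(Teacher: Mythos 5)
Your proposal is correct and uses essentially the same ingredients as the paper's proof, namely Lemma \ref{dirac: Dirac-relation} to intertwine a vector field $\hat{\Gamma}\in\hat{\mathcal{A}}$ acting on the radiation field with a vector field in $\mathcal{A}$ acting on the initial data, and Lemma \ref{dirac: 0-order energy} to convert the resulting expression into an $L^{2}$ bound, then extending by density and reading off injectivity from the zeroth-order isometry. The only difference is one of care: the paper asserts an exact norm equality $\|\Gamma^{\alpha}\phi(0,\cdot)\|_{L^{2}}=\|\hat{\Gamma}^{\alpha}\mathcal{F}^{+}_{Dirac}(\phi_{0},0)\|_{L^{2}}$ as if $\mathcal{A}$ and $\hat{\mathcal{A}}$ were in exact bijection, which they are not (e.g.\ $s\partial_{s}$ corresponds to $S+I$, the four spatial translations collapse to $-\omega^{i}\partial_{s}$), whereas your $\Psi_{\alpha}(\phi_{0})=\sum_{|\beta|\le|\alpha|}C_{\alpha,\beta}\Gamma^{\beta}\phi_{0}$ formulation correctly records these lower-order corrections and still closes the estimate.
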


\begin{proof}
In view of Lemma \ref{dirac: 0-order energy} and Lemma \ref{dirac: Dirac-relation}, for any $\phi_{0}\in C^{\infty}_{0}(\mathbb{R}^{3})$, we have
\begin{align*}
\|\Gamma^{\alpha}\phi(0,\cdot)\|_{L^{2}(\mathbb{S}^{2})}=\|\hat{\Gamma}^{\alpha}\mathcal{F}_{Dirac}^{+}(\phi_{0},0)\|_{L^{2}(\mathbb{S}^{2})},\quad \forall|\alpha|\leq N_{*},
\end{align*}
Therefore, the radiation field map
\begin{align*}
\mathcal{F}_{Dirac}^{+}(\cdot,0): \phi_{0}\mapsto \mathcal{F}_{Dirac}^{+}(\phi_{0},0),
\end{align*}
can be extended to a well defined linear injection from $\mathcal{X}$ to $\mathcal{Y}^{+}$.
\end{proof}

It is well known that even if the radiation field has compact support, the initial data might not have compact support. The following lemma shows that the radiation field map is surjective in suitable weighted energy space.
\begin{lemma}\label{dirac: surjection}
The radiation field map
\begin{align*}
\mathcal{F}_{Dirac}^{+}(\cdot,0): \ &\mathcal{X}\rightarrow \mathcal{Y}^{+}\\
                      &\phi_{0}\mapsto \mathcal{F}_{Dirac}^{+}(\phi_{0},0),
\end{align*}
is also a surjection.
\end{lemma}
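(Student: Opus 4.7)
The plan is to build a preimage of every $\psi\in \mathcal{Y}^{+}$ in $\mathcal{X}$ by: (a) handling the dense subset $E^{+}\subset \mathcal{Y}^{+}$ via the receding Dirac wave construction from Section~2; (b) promoting the zeroth-order $L^{2}$ isometry of Lemma~\ref{dirac: 0-order energy} to the full $\mathcal{X}$-norm using the commutator identities of Lemma~\ref{dirac: Dirac-relation}; and (c) passing to the limit using completeness of $\mathcal{X}$ and continuity of the map from Lemma~\ref{dirac: embedding}. Since $\mathcal{Y}^{+}$ is the completion of $E^{+}$, it suffices to produce preimages on $E^{+}$ together with the quantitative bound $\|\phi(0,\cdot)\|_{\mathcal{X}}=\|\psi\|_{\mathcal{Y}^{+}}$.

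Given $\psi\in E^{+}$, Lemma~\ref{dirac: existence of receding dirac waves} produces a unique smooth receding Dirac wave $\phi(t,x)$ with $\mathcal{D}\phi=0$ and $\lim_{r\to\infty}r\phi(r+s,r\omega)=\psi(s,\omega)$. Lemma~\ref{dirac: finite energy of receding dirac waves} gives $\phi(0,\cdot)\in L^{2}(\mathbb{R}^{3})$, and Lemma~\ref{dirac: 0-order energy} yields $\|\phi(0,\cdot)\|_{L^{2}}=\|\psi\|_{L^{2}}$. To upgrade this to membership in $\mathcal{X}$, observe that every $\Gamma\in \mathcal{A}$ commutes with $\mathcal{D}$, so $\Gamma^{\alpha}\phi$ is again a free Dirac wave. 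By iterating Lemma~\ref{dirac: Dirac-relation}, its radiation field is exactly $\hat{\Gamma}^{\alpha}\psi$, which remains in $E^{+}$: the multiplier $s\partial_{s}$ appearing in the boosts and the scaling preserves compact support in $s$, and the annihilation $P(-\omega)(\hat{\Gamma}^{\alpha}\psi)=0$ is automatic because the future radiation field of any free Dirac wave lies in the range of $P(\omega)$ by Lemma~\ref{dirac: 0-order energy}. Applying the zero-order isometry to $\Gamma^{\alpha}\phi$ gives
\begin{equation*}
\|\Gamma^{\alpha}\phi(0,\cdot)\|_{L^{2}(\mathbb{R}^{3})}=\|\hat{\Gamma}^{\alpha}\psi\|_{L^{2}(\mathbb{R}\times\mathbb{S}^{2})},
\end{equation*}
and summing over $|\alpha|\leq 6$ delivers $\|\phi(0,\cdot)\|_{\mathcal{X}}=\|\psi\|_{\mathcal{Y}^{+}}$.

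For the general case, approximate $\psi\in\mathcal{Y}^{+}$ by a sequence $\psi_{n}\in E^{+}$ with $\psi_{n}\to\psi$ in $\mathcal{Y}^{+}$, and let $\phi^{(n)}_{0}$ be the initial data constructed above. The isometry makes $\{\phi^{(n)}_{0}\}$ Cauchy in $\mathcal{X}$, so it converges to some $\phi_{0}\in \mathcal{X}$. The bounded linear map $\mathcal{F}^{+}_{Dirac}(\cdot,0):\mathcal{X}\to\mathcal{Y}^{+}$ from Lemma~\ref{dirac: embedding} is continuous, so
\begin{equation*}
\mathcal{F}^{+}_{Dirac}(\phi_{0},0)=\lim_{n\to\infty}\mathcal{F}^{+}_{Dirac}(\phi^{(n)}_{0},0)=\lim_{n\to\infty}\psi_{n}=\psi,
\end{equation*}
giving surjectivity.

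The main obstacle is step (b): verifying that each $\hat{\Gamma}^{\alpha}\psi$ truly satisfies the two defining conditions of $E^{+}$ so that the isometry can be iterated without loss. Smoothness and compact support in $s$ require a careful look at the Lorentz boost fields in $\hat{\mathcal{L}}$, whose $\omega^{i}s\partial_{s}$ terms must not enlarge the $s$-support, while the projection condition $P(-\omega)\hat{\Gamma}^{\alpha}\psi=0$ is best read off from the identification of $\hat{\Gamma}^{\alpha}\psi$ as the radiation field of the free Dirac wave $\Gamma^{\alpha}\phi$. Once these stability properties are confirmed, the rest of the argument is a standard density-plus-isometry extension.
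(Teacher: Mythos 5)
Your proposal follows essentially the same route as the paper: construct the receding Dirac wave for $\psi$ in the dense set $E^{+}$, control all $\Gamma^{\alpha}\phi(0,\cdot)$ by exploiting that the vector fields commute with $\mathcal{D}$ and that $\hat{\Gamma}^{\alpha}\psi$ stays in $E^{+}$, and conclude by density together with the isometric embedding. The only difference is internal: the paper obtains finiteness of $\|\Gamma^{\alpha}\phi(0,\cdot)\|_{L^{2}}$ by constructing a receding wave directly for $\hat{\Gamma}^{\alpha}\psi$ and invoking uniqueness, whereas you derive the exact norm identity from Lemma \ref{dirac: Dirac-relation} combined with the zeroth-order isometry of Lemma \ref{dirac: 0-order energy} --- an equally valid and slightly more quantitative variant, provided you note that those lemmas, stated for compactly supported data, extend to receding waves via the smooth profile guaranteed by Lemma \ref{dirac: existence of receding dirac waves}.
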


\begin{proof}
For any $\psi\in C_{0}^{\infty}(\mathbb{R}\times\mathbb{S}^{2},\mathbb{C}^{4})$ with $P(-\omega)\psi=0$. In view of Lemma \ref{dirac: existence of receding dirac waves} and Lemma \ref{dirac: finite energy of receding dirac waves}, there exists unique receding Dirac wave $\phi(t,x)$ which might not have compact support. However, $\phi$ has finite energy,
\begin{align*}
\|\phi(0,x)\|_{L^{2}(\mathbb{R}^{3})}<\infty.
\end{align*}
For any $\hat{\Gamma}\in \hat{\mathcal{A}}$ and $|\alpha|\leq N_{*}$, $\hat{\Gamma}^{\alpha}\psi\in C_{0}^{\infty}(\mathbb{R}\times\mathbb{S}^{2},\mathbb{C}^{4})$ and $P(-\omega)\hat{\Gamma}^{\alpha}\psi=0$. In view of Lemma \ref{dirac: existence of receding dirac waves} and Lemma \ref{dirac: finite energy of receding dirac waves}, there exists unique receding Dirac wave $\phi^{\alpha}(t,x)$ which might not have compact support. However, $\phi^{\alpha}$ has finite energy,
\begin{align*}
\|\phi^{\alpha}(0,x)\|_{L^{2}(\mathbb{R}^{3})}<\infty.
\end{align*}
On the other hand, $\Gamma^{\alpha}\phi(t,x)$ is also receding Dirac wave for $\hat{\Gamma}^{\alpha}\psi$. By uniqueness of receding wave, we have 
\begin{align*}
\Gamma^{\alpha}\phi(t,x)=\phi^{\alpha}(t,x),
\end{align*}
and 
\begin{align*}
\|\Gamma^{\alpha}\phi(0,x)\|_{L^{2}(\mathbb{R}^{3})}<\infty,\forall |\alpha|\leq N_{*}.
\end{align*}
Therefore, $\forall \delta>0$, there exists $\phi_{1}\in C_{0}^{\infty}(\mathbb{R}^{3},\mathbb{C}^{4})$ such that $\|\phi_{1}-\phi(0,\cdot)\|_{\mathcal{X}}\leq \delta$, then $\|\mathcal{F}_{Dirac}^{+}(\phi_{1},0)-\psi\|_{\mathcal{Y}^{+}}\leq \delta$. Since $\{\psi|\psi\in C_{0}^{\infty}(\mathbb{R}\times\mathbb{S}^{2},\mathbb{C}^{4}),P(-\omega)\psi=0\}$ is dense in $\mathcal{Y}^{+}$, the radiation field map is a surjection.
\end{proof}

In view of Lemma \ref{dirac: embedding} and Lemma \ref{dirac: surjection}, we can obtain
\begin{theorem}\label{dirac: linear isomorphism theorem}
The radiation field map:
\begin{align*}
\mathcal{F}_{Dirac}^{+}(\cdot,0):\ & \mathcal{X}\rightarrow \mathcal{Y}^{+}\\
           & \phi_{0}\mapsto \mathcal{F}^{+}(\phi_{0},0),
\end{align*}
is an isomorphism.
\end{theorem}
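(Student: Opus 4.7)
The plan is to derive the theorem as the direct conjunction of the two immediately preceding lemmas, with only minor bookkeeping needed to pass from dense subspaces to their completions. Lemma \ref{dirac: embedding} already provides injectivity together with the boundedness estimate; indeed, for smooth compactly supported data $\phi_{0}\in C_{0}^{\infty}(\mathbb{R}^{3},\mathbb{C}^{4})$ it records the identity
\[
\|\Gamma^{\alpha}\phi(0,\cdot)\|_{L^{2}(\mathbb{R}^{3})}=\|\hat{\Gamma}^{\alpha}\mathcal{F}^{+}_{Dirac}(\phi_{0},0)\|_{L^{2}(\mathbb{R}\times\mathbb{S}^{2})},\quad |\alpha|\leq N_{*},
\]
which couples the vector-field translation rules of Lemma \ref{dirac: Dirac-relation} (each $\Gamma\in\mathcal{A}$ commutes out to the corresponding $\hat{\Gamma}\in\hat{\mathcal{A}}$) with the $L^{2}$ energy conservation of Lemma \ref{dirac: 0-order energy} applied to $\Gamma^{\alpha}\phi$. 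Summing over $|\alpha|\leq N_{*}$ yields $\|\phi_{0}\|_{\mathcal{X}}=\|\mathcal{F}^{+}_{Dirac}(\phi_{0},0)\|_{\mathcal{Y}^{+}}$, so the map extends from the dense subspace $C_{0}^{\infty}(\mathbb{R}^{3},\mathbb{C}^{4})$ to a linear isometry $\mathcal{X}\hookrightarrow\mathcal{Y}^{+}$.

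For surjectivity I would invoke Lemma \ref{dirac: surjection}. Given any $\psi\in E^{+}$, the receding-wave construction of Lemmas \ref{dirac: existence of receding dirac waves} and \ref{dirac: finite energy of receding dirac waves}, applied to $\psi$ and to each $\hat{\Gamma}^{\alpha}\psi$ (which still satisfies $P(-\omega)\hat{\Gamma}^{\alpha}\psi=0$ because $\hat{\mathcal{A}}$ was designed precisely to preserve this constraint), produces a spinor $\phi(t,x)$ whose initial slice $\phi(0,\cdot)$ has finite $\mathcal{X}$-norm even though it need not be compactly supported. Approximating this $\phi(0,\cdot)$ by $\phi_{1}\in C_{0}^{\infty}(\mathbb{R}^{3},\mathbb{C}^{4})$ in $\mathcal{X}$ and combining with the isometry above gives $\|\mathcal{F}^{+}_{Dirac}(\phi_{1},0)-\psi\|_{\mathcal{Y}^{+}}\leq\delta$ for arbitrary $\delta>0$, so the image of $\mathcal{F}^{+}_{Dirac}(\cdot,0)$ is dense in $\mathcal{Y}^{+}$ as $E^{+}$ itself is dense in $\mathcal{Y}^{+}$.

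Combining the two steps, $\mathcal{F}^{+}_{Dirac}(\cdot,0):\mathcal{X}\to\mathcal{Y}^{+}$ is a linear isometry with dense image; in a Banach space an isometry has closed range, so the image is all of $\mathcal{Y}^{+}$ and the map is a bijection. The inverse is automatically bounded by the same isometry identity, so no appeal to the open mapping theorem is needed. I do not anticipate any genuine obstacle at this stage: all the analytic work---the commutation identities between $\mathcal{A}$ and $\hat{\mathcal{A}}$, the conservation of the $L^{2}$ norm along the radiation map, and the existence of receding Dirac waves with finite weighted energy---is already contained in the preceding lemmas, and the present theorem amounts to packaging these results into the single statement of isomorphism.
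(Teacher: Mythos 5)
Your proposal follows the same route as the paper, whose proof of this theorem consists precisely of combining Lemma \ref{dirac: embedding} (the isometric embedding on the dense subspace $C_{0}^{\infty}$, obtained from Lemma \ref{dirac: 0-order energy} and Lemma \ref{dirac: Dirac-relation}) with Lemma \ref{dirac: surjection} (surjectivity via receding Dirac waves and density of $E^{+}$ in $\mathcal{Y}^{+}$). Your additional remark that an isometry with dense image automatically has closed range, hence is onto with bounded inverse, is a correct and welcome tightening of the density-to-surjectivity step that the paper's Lemma \ref{dirac: surjection} leaves implicit.
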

 
\subsection{Inhomogenous case}
\begin{lemma}\label{dirac: bilinear map}
Let $\phi(t,x)$ be the solution of Eq. \eqref{dirac: dirac equation}. The future radiation field map for the inhomogenous Dirac equation is defined as
\begin{align*}
\mathcal{F}_{Dirac}^{+}(\phi_{0},\Phi)(s,\omega):=\lim_{r\rightarrow +\infty}r\phi(r+s,r\omega).
\end{align*}
Moreover, this map can be extended to a bilinear map as follows
\begin{align*}
\mathcal{F}_{Dirac}^{+}: \mathcal{X}\times L^{1}(\mathbb{R}^{+},\mathcal{X})\rightarrow \mathcal{Y}^{+},\\
(\phi_{0},\Phi)\mapsto \mathcal{F}^{+}_{Driac}(\phi_{0},\Phi).
\end{align*}
\end{lemma}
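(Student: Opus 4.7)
The plan is to reduce the inhomogeneous problem to a superposition of homogeneous evolutions via Duhamel's principle, and then invoke the Linear Isomorphism Theorem (Theorem~\ref{dirac: linear isomorphism theorem}) to control each piece in $\mathcal{Y}^+$.

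First, I would work with smooth compactly-supported data $(\phi_0, \Phi) \in C_0^\infty(\mathbb{R}^3, \mathbb{C}^4) \times C_0^\infty(\mathbb{R}^{1+3}, \mathbb{C}^4)$ and then use density to extend. The equation $\mathcal{D}\phi = \Phi$ is equivalent, after multiplying by $\gamma^0$ (which satisfies $(\gamma^0)^2 = I$), to $\partial_t \phi = -\gamma^0\gamma^i\partial_i\phi + \gamma^0\Phi$, so the usual Duhamel formula gives
\begin{equation*}
\phi(t,x) = \phi_{\mathrm{hom}}(t,x) + \int_0^t \phi^{(\tau)}(t,x)\, d\tau,
\end{equation*}
where $\phi_{\mathrm{hom}}$ is the free Dirac evolution of $\phi_0$ and $\phi^{(\tau)}$ is the free Dirac evolution with Cauchy data $\gamma^0\Phi(\tau,\cdot)$ prescribed at time $t=\tau$. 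Because the radiation-field limit is linear in $\phi$, and because time-translation by $\tau$ shifts the retarded time $s=t-r$ by $\tau$ (the elementary identity $\lim_{r\to\infty}r\phi^{(\tau)}(s+r,r\omega)=\mathcal{F}^+_{Dirac}(\gamma^0\Phi(\tau,\cdot),0)(s-\tau,\omega)$),
\begin{equation*}
\mathcal{F}^+_{Dirac}(\phi_0,\Phi)(s,\omega) = \mathcal{F}^+_{Dirac}(\phi_0,0)(s,\omega) + \int_0^{\infty} \mathcal{F}^+_{Dirac}\bigl(\gamma^0\Phi(\tau,\cdot),\,0\bigr)(s-\tau,\omega)\, d\tau.
\end{equation*}
Bilinearity in $(\phi_0,\Phi)$ is then immediate from the linearity of $\mathcal{D}$ in $\phi$.

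For the continuity bound $\|\mathcal{F}^+_{Dirac}(\phi_0,\Phi)\|_{\mathcal{Y}^+} \lesssim \|\phi_0\|_{\mathcal{X}} + \|\Phi\|_{L^1(\mathbb{R}^+,\mathcal{X})}$, the first term on the right is handled by Theorem~\ref{dirac: linear isomorphism theorem}. For the integral term, Minkowski's integral inequality in $\tau$ lets me exchange the $\mathcal{Y}^+$ norm with the integral. For each $\hat{\Gamma}\in\hat{\mathcal{A}}$ I would commute $\hat{\Gamma}$ past the $s$-translation $T_\tau : \psi(s,\omega)\mapsto \psi(s-\tau,\omega)$; the translations $\partial_s$ and rotations in $\hat{\mathcal{A}}$ commute exactly, while the scaling and boosts obey $[s\partial_s,T_\tau] = \tau \partial_s T_\tau$ and analogous identities. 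After this commutation, each integrand is reduced to an untranslated radiation field, to which Theorem~\ref{dirac: linear isomorphism theorem} applies, producing a bound proportional to $\|\Phi(\tau,\cdot)\|_{\mathcal{X}}$.

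The main obstacle is the polynomial-in-$\tau$ coefficients produced by the non-commutation of $s\partial_s$ and the modified boosts with $T_\tau$: naively these spoil integrability against $\|\Phi\|_{L^1(\mathbb{R}^+,\mathcal{X})}$. The resolution is that the corresponding vector fields in $\mathcal{A}$ itself are time-dependent: $\Omega_{0i}=t\partial_i + x^i\partial_t$ and $S=t\partial_t + r\partial_r$, applied at time $t=\tau$ and reduced using the equation $\partial_t\phi = -\gamma^0\gamma^i\partial_i\phi + \gamma^0\Phi$, supply exactly the $\tau\partial_i$ and $\tau\partial_t$ factors that match the commutator growth on the null-infinity side. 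Tracking this cancellation term by term yields the required estimate on smooth compactly supported data, and a density argument using the fact that $C_0^\infty$ is dense in both $\mathcal{X}$ and $L^1(\mathbb{R}^+,\mathcal{X})$ extends the bounded bilinear map to the full product space.
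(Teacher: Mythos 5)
Your skeleton coincides with the paper's: Duhamel's formula, the translation identity $\mathcal{F}^{+}_{Dirac}(\phi_{0},\Phi)(s,\omega)=\mathcal{F}^{+}_{Dirac}(\phi_{0},0)(s,\omega)+\int_{0}^{\infty}\mathcal{F}^{+}_{Dirac}(\Phi(\tau,\cdot),0)(s-\tau,\omega)\,d\tau$ (your explicit $\gamma^{0}$ in the Duhamel source is harmless and is absorbed into the notation in the paper), commuting the $\hat{\mathcal{A}}$ fields through the integral via Lemma \ref{dirac: Dirac-relation}, summing over $|\alpha|\le N_{*}$, and a density argument. The paper simply applies Lemma \ref{dirac: Dirac-relation} under the Duhamel integral and concludes $\|\mathcal{F}^{+}_{Dirac}(\phi_{0},\Phi)\|_{\mathcal{Y}^{+}}\le\|\phi_{0}\|_{\mathcal{X}}+\int_{0}^{\infty}\|\Phi(\tau,\cdot)\|_{\mathcal{X}}\,d\tau$.

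The point at which your write-up stops being a proof is exactly the subtlety you yourself flag: $s\partial_{s}$ and the modified boosts do not commute with the translation $T_{\tau}$, producing coefficients of size $\tau$. Your proposed resolution --- that these are matched by the $t=\tau$ forms of $\Omega_{0i}=t\partial_{i}+x^{i}\partial_{t}$ and $S=t\partial_{t}+r\partial_{r}$ --- is asserted rather than carried out, and as described it does not deliver the stated bound: what those time-$\tau$ vector fields control are $\tau$-weighted $L^{2}$ quantities of the slice (terms of the type $\tau\|\partial\Phi(\tau,\cdot)\|_{L^{2}}$), whereas $\|\Phi(\tau,\cdot)\|_{\mathcal{X}}$ as defined in the paper poses $\Phi(\tau,\cdot)$ as Cauchy data at time $0$ of an auxiliary free flow, where the boosts degenerate to $x^{i}\partial_{t}$ and the scaling to $r\partial_{r}$, so no factor of $\tau$ is available on that side of the ledger. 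Hence after your commutation the integrand is, schematically, $(1+\tau)\|\Phi(\tau,\cdot)\|_{\mathcal{X}}$, and ``tracking the cancellation term by term'' would have to show that the linear-in-$\tau$ growth genuinely cancels rather than being reorganized; you have not shown this, and a source concentrated near a large time $T$ (say $\Phi(t,x)=\chi(t-T)g(x)$, whose radiation field is a fixed profile translated to $s\sim T$, so that $\|s\partial_{s}(\cdot)\|_{L^{2}}\sim T$) indicates that the growth is real unless the $L^{1}(\mathbb{R}^{+},\mathcal{X})$ norm of the source is interpreted with the time-$\tau$ vector fields on each slice. To complete the argument you must either prove the claimed cancellation or fix that interpretation of the source norm and run the estimate against it; as it stands the continuity bound, which is the substance of the lemma, is not established.
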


\begin{proof}
Let $\phi(t,x)$ be solution of Eq. \eqref{dirac: dirac equation}. Recall that $\phi_{0}\in C_{0}^{\infty}(\mathbb{R}^{3},\mathbb{C}^{4})$ and  $\Phi\in C_{0}^{\infty}(\mathbb{R}^{1+3},\mathbb{C}^{4})$. By Duhamel principle
\begin{align*}
\phi(t,\cdot)=U(t)\phi_{0}+\int_0^{t}U(t-\tau)\Phi(\tau,\cdot)d\tau,
\end{align*}
where $U(t)\phi(0)$ denotes the solution of homogenous Dirac equation with initial data $\phi_{0}$. Since 
\begin{align*}
\mathcal{F}_{Dirac}^{+}\big(U(-\tau)\phi_{0},0\big)(s,\omega)=\mathcal{F}_{Dirac}^{+}(\phi_{0},0)(s-\tau,\omega),  
\end{align*}
we have
\begin{align*}
\mathcal{F}_{Dirac}^{+}(\phi_{0},\Phi)(s,\omega)=\mathcal{F}_{Dirac}^{+}(\phi_{0},0)(s,\omega)+\int_{0}^{+\infty}\mathcal{F}^{+}_{Dirac}\big(\Phi(\tau,\cdot),0\big)(s-\tau,\omega)d\tau.
\end{align*}
For any $\alpha$, in view of Lemma \ref{dirac: Dirac-relation}, we have 
\begin{align*}
&\hat{\Gamma}^{\alpha}\mathcal{F}^{+}_{Dirac}\big(\phi(0,\cdot),\Phi\big)(s,\omega)\\
&=\hat{\Gamma}^{\alpha}\mathcal{F}^{+}_{Dirac}\big(\phi(0,\cdot),0\big)(s,\omega)+\int_{0}^{+\infty}\hat{\Gamma}^{\alpha}\mathcal{F}_{Dirac}^{+}\big(\Phi(\tau,\cdot),0\big)(s-\tau,\omega)d\tau,\\
&=\mathcal{F}_{Dirac}^{+}\big(\Gamma^{\alpha}\phi(0,\cdot),0\big)(s,\omega)+\int_{0}^{+\infty}\mathcal{F}_{Dirac}^{+}\big(\Gamma^{\alpha}\Phi(\tau,\cdot),0\big)(s-\tau,\omega)d\tau.
\end{align*}
Summing over $|\alpha|\leq N_{*}$, we have
\begin{align*}
\big\|\mathcal{F}_{Dirac}^{+}(\phi_{0},\Phi)\big\|_{\mathcal{Y}^{+}}\leq \|\phi_{0}\|_{\mathcal{X}}+\int_{0}^{\infty}\|\Phi(\tau,\cdot)\|_{\mathcal{X}}d\tau.
\end{align*}
Therefore, the radiation field map for inhomogenous Dirac equation can be extended to a bilinear map,
\begin{align*}
\mathcal{F}_{Dirac}^{+}: \mathcal{X}\times L^{1}(\mathbb{R}^{+},\mathcal{X})\rightarrow \mathcal{Y}^{+}, (\phi_{0},\Phi)\mapsto \mathcal{F}^{+}_{Driac}(\phi_{0},\Phi).
\end{align*}
\end{proof}

\section{The proof of Isomorphism theorem}
In this section, we will study the nonlinear radiation fields for Dirac equations with spinor null forms through a functional framework. 

\subsection{Existence of nonlinear radiation field}
We will use $S(\phi_{0}):=\phi$ to denote the solution $\phi$ of Eq. \eqref{dirac: semi-linear dirac equation}
\begin{equation*}
\begin{cases}
\mathcal{D}\phi=N(\phi,\phi),\\
\phi(0,\cdot)=\phi_{0}.
\end{cases}
\end{equation*}
Setting $\mathcal{W}$ to be the completion of $C_{c}^{\infty}(\mathbb{R}^{+}\times\mathbb{R}^{3},\mathbb{C}^{4})$ with respect to the norm
\begin{align*}
\|\phi\|_{\mathcal{W}}^{2}:=\|\phi\|^{2}_{L^{\infty}(\mathbb{R}^{+},\mathcal{X})}+\sum_{|\alpha|\leq N_{*}}\int_{\mathbb{R}^{3}}\frac{\big|P(-\omega)\Gamma^{\alpha}S(\phi_{0})\big|^{2}}{\big(1+|t-|x||\big)^{1+2\mu}}dxdt,
\end{align*}
the nonlinear estimates in section five can be summarized as follows:
\begin{lemma}
There exists a constant $\varepsilon_{0}>0$ such that the initial data to nonlinear solution map
\begin{align*}
B_{\mathcal{X}}(0,\varepsilon_{0})\subset&\mathcal{X}\rightarrow \mathcal{W}\\
                        &\phi_{0}\mapsto S(\phi_{0}),
\end{align*} 
is well defined and $\big\|S(\phi_{0})\big\|_{\mathcal{W}}\lesssim \|\phi_{0}\|_{\mathcal{X}}$.  Furthermore, the nonlinear solution to source term map
\begin{align*}
&\mathcal{W}\rightarrow L^{1}(\mathbb{R}^{+},\mathcal{X})\\
 &\phi \mapsto N(\phi,\phi),
\end{align*}
is well defined and $\big\|N(\phi,\phi)\big\|_{L^{1}(\mathbb{R}^{+},\mathcal{X})}\lesssim \|\phi\|^{2}_{\mathcal{W}}$.

\end{lemma}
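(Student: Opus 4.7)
The first claim is essentially a repackaging of the Main Energy Estimate proved in Section~5. My plan is to take $\varepsilon_{0}$ to be precisely the constant from that estimate, so that for any smooth compactly supported $\phi_{0}$ with $\|\phi_{0}\|_{\mathcal{X}}\leq \varepsilon_{0}$ the semilinear Cauchy problem admits a unique global solution $S(\phi_{0})$ satisfying both the uniform bound $\sum_{|\alpha|\leq N_{*}}\|\Gamma^{\alpha}S(\phi_{0})(t,\cdot)\|^{2}_{L^{2}_{x}}\lesssim \|\phi_{0}\|_{\mathcal{X}}^{2}$ and the accompanying weighted space-time integral bound. These two estimates are exactly the two pieces of $\|S(\phi_{0})\|_{\mathcal{W}}^{2}$, so the first assertion is immediate; density of $C_{c}^{\infty}$ in $\mathcal{X}$ together with the quantitative dependence extends the map to the whole ball.

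For the second claim, my plan is to reduce to the bulk-term estimate already carried out in Section~5. First I would use the commutation property in Lemma~\ref{dirac: spinor null form} to expand $\Gamma^{\alpha}N(\phi,\phi)=\sum_{\alpha_{1}+\alpha_{2}\leq\alpha}N_{\alpha_{1},\alpha_{2}}(\Gamma^{\alpha_{1}}\phi,\Gamma^{\alpha_{2}}\phi)$ for each $|\alpha|\leq N_{*}$, and then exploit $I=P(\omega)+P(-\omega)$ together with $N(P(\omega)X,P(\omega)Y)=0$ to ensure each bilinear term carries at least one factor of $P(-\omega)$ on one of its two slots. Since $N_{*}=6$ and $N_{\infty}-1=3$, in every derivative splitting $\alpha=\alpha_{1}+\alpha_{2}$ at least one of $|\alpha_{1}|,|\alpha_{2}|$ is at most $N_{\infty}-1$, and I would always put the small-index factor into $L^{\infty}_{x}$ while the large-index factor carries the $L^{2}_{x}$ norm.

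In the subcase where the small index lands on the $P(-\omega)$ slot, I apply the improved pointwise decay $|P(-\omega)\Gamma^{\alpha_{1}}\phi|\lesssim \|\phi\|_{L^{\infty}_{t}\mathcal{X}}(1+t+r)^{-3/2}$ from Lemma~\ref{dirac: decay estiamte} and pair with the $L^{2}$ energy of $\Gamma^{\alpha_{2}}\phi$; the $(1+t)^{-3/2}$ factor is then integrable in $t$. In the opposite subcase I use Klainerman-Sobolev to bound the small-index factor pointwise by $\|\phi\|_{L^{\infty}_{t}\mathcal{X}}(1+t+r)^{-1}(1+|t-r|)^{-1/2}$, insert the weight $(1+|t-r|)^{-(1+2\mu)/2}$ on the $P(-\omega)$ factor and $(1+|t-r|)^{(1+2\mu)/2}$ on the pointwise factor, and compute $\|N\|_{L^{2}_{x}}\lesssim \|\phi\|_{\mathcal{X}}(1+t)^{-(1-\mu)}\big(\int\frac{|P(-\omega)\Gamma^{\alpha_{2}}\phi|^{2}}{(1+|t-r|)^{1+2\mu}}dx\big)^{1/2}$ using $(1+|t-r|)^{2\mu}\leq (1+t+r)^{2\mu}$. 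Cauchy-Schwarz in $t$, together with the integrability of $(1+t)^{-(2-2\mu)}$ for $\mu=\tfrac{1}{4}$, then yields $\int_{0}^{\infty}\|N_{\alpha_{1},\alpha_{2}}(\Gamma^{\alpha_{1}}\phi,\Gamma^{\alpha_{2}}\phi)\|_{L^{2}_{x}}\,dt\lesssim \|\phi\|_{L^{\infty}_{t}\mathcal{X}}\|\phi\|_{\mathcal{W}}\lesssim \|\phi\|_{\mathcal{W}}^{2}$. Summing over $|\alpha|\leq N_{*}$ gives the stated bound.

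The main obstacle is bookkeeping rather than analysis: for each multi-index $\alpha$ with $|\alpha|\leq N_{*}$, one has to verify that the null-decomposition always allows the $P(-\omega)$ projector to land on whichever factor is best suited to absorb the weight (either the high-derivative one, via the space-time integral, or the low-derivative one, via the improved decay). Because $N_{*}=6$ forces $\min(|\alpha_{1}|,|\alpha_{2}|)\leq 3=N_{\infty}-1$, the two subcases above exhaust every possibility, and matching the threshold $\mu<\tfrac{1}{2}$ of the ghost weight to the integrability threshold $(1+t)^{-(2-2\mu)}$ closes the argument.
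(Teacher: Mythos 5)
Your proposal is correct and follows essentially the same route as the paper, which gives no separate argument for this lemma but simply packages the Main Energy Estimate (for the solution map) and the Section 5 bulk null-form estimate (the two-case split: Klainerman--Sobolev decay plus the ghost-weight spacetime norm when $P(-\omega)$ sits on the high-order factor, improved decay when it sits on the low-order factor) into the $\mathcal{W}$ and $L^{1}(\mathbb{R}^{+},\mathcal{X})$ norms. The only caveat---shared with the paper itself---is that the improved $(1+t+r)^{-3/2}$ decay of Lemma \ref{dirac: decay estiamte} is derived using the Dirac equation, so your subcase in which the small index carries $P(-\omega)$ is strictly justified only for $\phi$ arising as (or near) solutions, not for arbitrary elements of $\mathcal{W}$.
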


The Lemma \ref{dirac: bilinear map} implies that
\begin{lemma}
The initial data and source term to radiation field map,
\begin{align*}
\mathcal{X}\times L^{1}&(\mathbb{R}^{+},\mathcal{X})\rightarrow \mathcal{Y}^{+}\\
                  & (\phi_{0},\Phi)\mapsto \mathcal{F}_{Dirac}^{+}(\phi_{0},\Phi).
\end{align*}
is a well defined bilinear map and
\begin{align*}
\big\|\mathcal{F}^{+}(\phi_{0},\Phi)\big\|_{\mathcal{Y}^{+}}\lesssim \|\phi_{0}\|_{\mathcal{X}}+\|\Phi\|_{L^{1}(\mathbb{R}^{+},\mathcal{X})}.
\end{align*}
\end{lemma}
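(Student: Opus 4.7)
The plan is to deduce this statement as an almost immediate corollary of Lemma \ref{dirac: bilinear map}, since the main computational work has already been carried out there. First I would recall that for smooth compactly supported $(\phi_0, \Phi) \in C_0^\infty(\mathbb{R}^3, \mathbb{C}^4) \times C_0^\infty(\mathbb{R}^{1+3}, \mathbb{C}^4)$, Duhamel's principle gives
\[
\mathcal{F}^+_{Dirac}(\phi_0, \Phi)(s,\omega) = \mathcal{F}^+_{Dirac}(\phi_0, 0)(s,\omega) + \int_0^{+\infty} \mathcal{F}^+_{Dirac}\bigl(\Phi(\tau,\cdot), 0\bigr)(s-\tau,\omega)\, d\tau,
\]
which is manifestly bilinear in $(\phi_0,\Phi)$. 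Applying each $\hat\Gamma^\alpha \in \hat{\mathcal{A}}$ with $|\alpha| \leq N_*$ and using the intertwining identities in Lemma \ref{dirac: Dirac-relation} converts these into $\mathcal{F}^+_{Dirac}(\Gamma^\alpha \phi_0, 0)$ and $\mathcal{F}^+_{Dirac}(\Gamma^\alpha \Phi(\tau,\cdot), 0)$ respectively; then the linear $L^2$-isometry of Lemma \ref{dirac: 0-order energy} (or equivalently the linear isomorphism Theorem \ref{dirac: linear isomorphism theorem}) controls each term.

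Summing over $|\alpha| \leq N_*$ and applying Minkowski's integral inequality to the Duhamel integral yields
\[
\|\mathcal{F}^+_{Dirac}(\phi_0, \Phi)\|_{\mathcal{Y}^+} \lesssim \|\phi_0\|_{\mathcal{X}} + \int_0^\infty \|\Phi(\tau,\cdot)\|_{\mathcal{X}}\, d\tau = \|\phi_0\|_{\mathcal{X}} + \|\Phi\|_{L^1(\mathbb{R}^+, \mathcal{X})},
\]
which is the claimed estimate; this much is exactly the content of Lemma \ref{dirac: bilinear map}.

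Finally, to upgrade from smooth compactly supported data to the full space $\mathcal{X} \times L^1(\mathbb{R}^+, \mathcal{X})$, I would invoke the standard bounded-linear-extension argument: since $C_0^\infty(\mathbb{R}^3,\mathbb{C}^4)$ is dense in $\mathcal{X}$ (by definition of $\mathcal{X}$ as a completion) and $C_0^\infty(\mathbb{R}^+\times\mathbb{R}^3,\mathbb{C}^4)$ is dense in $L^1(\mathbb{R}^+,\mathcal{X})$, the bound above shows that the map is uniformly continuous on a dense subspace of the product space with respect to each variable separately. One then extends by continuity; the extension remains bilinear because bilinearity is preserved under limits, and the bound passes to the limit. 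There is no serious obstacle here—the only bookkeeping point is to verify that the extended map agrees with the pointwise-a.e. limit definition of $\mathcal{F}^+_{Dirac}$ whenever the latter makes sense, which follows from the uniqueness of continuous extensions together with the fact that the pointwise definition is already the Duhamel formula above on the dense subclass.
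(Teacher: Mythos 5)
Your proposal is correct and follows essentially the same route as the paper: the paper proves this lemma simply by invoking Lemma \ref{dirac: bilinear map}, whose proof is exactly your argument — the Duhamel decomposition of $\mathcal{F}^{+}_{Dirac}(\phi_{0},\Phi)$ into the homogeneous part plus the time-integrated homogeneous radiation fields of $\Phi(\tau,\cdot)$, commuting $\hat{\Gamma}^{\alpha}$ through via Lemma \ref{dirac: Dirac-relation}, controlling each piece by the linear $L^{2}$ identity of Lemma \ref{dirac: 0-order energy}, summing over $|\alpha|\leq N_{*}$, and extending to $\mathcal{X}\times L^{1}(\mathbb{R}^{+},\mathcal{X})$ by density. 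The only difference is that you spell out the density-extension step explicitly, which the paper leaves implicit.
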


Note that the nonlinear radiation field map is formed by the composition of three maps: the initial data to nonlinear solution map, the nonlinear solution to source term map, and the initial data and source term to radiation field map. Therefore, we have the lemma below.
\begin{lemma}
There exists a universal $\varepsilon_{0}>0$ such that the nonlinear radiation field map
\begin{align*}
\mathcal{F}^{+}_{nonlinear}: B_{\mathcal{X}}(0,\varepsilon_{0})\subset\mathcal{X}\rightarrow \mathcal{Y}^{+}.
\end{align*}
is well defined and 
\begin{align*}
\big\|\mathcal{F}^{+}_{nonlinear}(\phi_{0})\big\|_{\mathcal{Y}^{+}}&=\Big\|\mathcal{F}^{+}\Big(\phi_{0},N\big(S(\phi_{0}),S(\phi_{0})\big)\Big)\Big\|_{\mathcal{Y}^{+}},\\
&\lesssim \|\phi_{0}\|_{\mathcal{X}}+\Big\|N\big(S(\phi_{0}),S(\phi_{0}))\big)\Big\|_{L^{1}(\mathbb{R}^{+},\mathcal{X})},\\
&\lesssim  \|\phi_{0}\|_{\mathcal{X}}.
\end{align*}
\end{lemma}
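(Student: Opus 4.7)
The plan is to obtain the stated bound by composing the three maps already established in the preceding two lemmas, and then absorbing the quadratic remainder by a smallness argument.

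First, given $\phi_0 \in B_{\mathcal{X}}(0,\varepsilon_0)$, I would invoke the first of the two preceding lemmas to conclude that $S(\phi_0) \in \mathcal{W}$ is well defined provided $\varepsilon_0$ is no larger than the threshold coming from the Main Energy Estimate in Section 5, and to obtain the linear bound $\|S(\phi_0)\|_{\mathcal{W}} \lesssim \|\phi_0\|_{\mathcal{X}}$. This is where the small-data global existence and the ghost-weight estimate do their work.

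Second, I would apply the nonlinear solution to source term portion of the same lemma: since $S(\phi_0) \in \mathcal{W}$, the spinor null form $N\bigl(S(\phi_0),S(\phi_0)\bigr)$ belongs to $L^1(\mathbb{R}^+,\mathcal{X})$ with the quadratic bound
\begin{equation*}
\bigl\|N\bigl(S(\phi_0),S(\phi_0)\bigr)\bigr\|_{L^1(\mathbb{R}^+,\mathcal{X})} \;\lesssim\; \|S(\phi_0)\|_{\mathcal{W}}^{2} \;\lesssim\; \|\phi_0\|_{\mathcal{X}}^{2}.
\end{equation*}
At this stage both inputs of the bilinear radiation-field map are under control.

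Third, I would feed the pair $\bigl(\phi_0, N(S(\phi_0),S(\phi_0))\bigr)$ into the bilinear map of the second preceding lemma, which gives
\begin{equation*}
\bigl\|\mathcal{F}^{+}_{Dirac}\bigl(\phi_0, N(S(\phi_0),S(\phi_0))\bigr)\bigr\|_{\mathcal{Y}^{+}} \;\lesssim\; \|\phi_0\|_{\mathcal{X}} + \bigl\|N(S(\phi_0),S(\phi_0))\bigr\|_{L^1(\mathbb{R}^+,\mathcal{X})}.
\end{equation*}
By definition, the left-hand side is $\|\mathcal{F}^{+}_{nonlinear}(\phi_0)\|_{\mathcal{Y}^{+}}$, and combining with the quadratic estimate from the previous step yields
\begin{equation*}
\|\mathcal{F}^{+}_{nonlinear}(\phi_0)\|_{\mathcal{Y}^{+}} \;\lesssim\; \|\phi_0\|_{\mathcal{X}} + \|\phi_0\|_{\mathcal{X}}^{2}.
\end{equation*}
Shrinking $\varepsilon_0$ so that $\|\phi_0\|_{\mathcal{X}} \leq \varepsilon_0 \leq 1$ absorbs the quadratic term into the linear one, giving the asserted bound $\|\mathcal{F}^{+}_{nonlinear}(\phi_0)\|_{\mathcal{Y}^{+}} \lesssim \|\phi_0\|_{\mathcal{X}}$.

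There is no real obstacle here: the content of the statement is entirely packaged into the two preceding lemmas, and the proof is a mechanical composition together with a choice of $\varepsilon_0$. The only point requiring a word of care is well-definedness: one must note that the threshold $\varepsilon_0$ for the solution map is the same (or may be taken as the minimum of) the thresholds appearing in the energy estimates of Section 5, so that for $\phi_0 \in B_{\mathcal{X}}(0,\varepsilon_0)$ every subsequent norm in the chain is finite and every arrow in the composition lands in the correct Banach space.
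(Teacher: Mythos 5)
Your proposal is correct and follows essentially the same route as the paper: the lemma is obtained by composing the initial-data-to-solution map, the solution-to-source-term map, and the bilinear radiation field map, with the quadratic term $\|\phi_0\|_{\mathcal{X}}^2$ absorbed via the smallness of $\varepsilon_0$. Nothing further is needed.
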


\subsection{Proof of the main theorem}
Before the proof of the Main Theorem \ref{dirac: main theorem rough version}, we review the inverse function theorem in Banach space which is classical. The details can be found in \cite{KesavanSrinivasan}. 

Let $E$ and $F$ be real Banach spaces. We denote by $\mathcal{B}(E,F)$ the space of bounded linear transformation of $E$ into $F$. Let $\mathcal{U}\subset E$ be an open set, and $f:\mathcal{U}\rightarrow F$ be a given mapping. The map $f$ is said to be differentiable at $a\in\mathcal{U}$ if there exists a bounded linear transformation $df(a)\in \mathcal{B}(E,F)$ such that
\begin{align*}
\|f(a+h)-f(a)-df(a)h\|_{F}=o(\|h\|_{E}),
\end{align*}
If $df(a)$ exists for each $a\in\mathcal{U}$, we say that $f$ is differentiable in $\mathcal{U}$. If the mapping 
\begin{align*}
\mathcal{U}\rightarrow \mathcal{B}(E,F), a\mapsto df(a),
\end{align*}
is continuous from $\mathcal{U}$ into $\mathcal{B}(E,F)$, we say that $f$ is of class $C^{1}$.
Now we state the inverse function theorem in Banach space (Theorem 1.3.2 in \cite{KesavanSrinivasan}).
\begin{proposition}\label{qwer}
Let $E$ and $F$ be real Banach spaces and $f:\mathcal{U}\subset E\rightarrow F$ be a $C^{1}$-map. Let $a\in\mathcal{U}$ with $f(a)=b$, and let $df(a): E\rightarrow F$ be an isomorphism. Then, there exists a neighborhood $\mathcal{V}_{1}\subset E$ of $a$ and neighborhood $\mathcal{V}_{2}\subset F$ of $b$ such that 
\begin{align*}
f:\mathcal{V}_{1}\rightarrow \mathcal{V}_{2},
\end{align*}
is a $C^{1}$ diffeomorphism. 
\end{proposition}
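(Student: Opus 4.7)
The plan is to prove the Banach-space inverse function theorem by the standard contraction-mapping argument, which reduces local invertibility of a nonlinear $C^1$ map near a point where the differential is an isomorphism to the Banach fixed-point theorem. First I would perform a preliminary normalization: by translating, replacing $f$ with $f - b$, and composing on the left with $df(a)^{-1} \in \mathcal{B}(F,E)$ (which is a topological isomorphism, so it preserves the $C^1$ hypothesis and the diffeomorphism conclusion), I may assume without loss of generality that $E = F$, $a = 0$, $b = 0$, and $df(0) = \mathrm{Id}_E$. Define the auxiliary map $g(x) := x - f(x)$, so $g(0) = 0$ and $dg(0) = 0$.

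Next I would exploit the continuity of $x \mapsto df(x)$ at the origin to choose $r > 0$ with $\overline{B}(0,2r) \subset \mathcal{U}$ and $\|dg(x)\|_{\mathcal{B}(E,E)} \leq \tfrac{1}{2}$ for all $x \in \overline{B}(0,2r)$. By the mean value inequality in Banach spaces (an integral estimate along the straight segment connecting two points, using that $dg$ is continuous hence Bochner-integrable along the segment), this yields
\begin{equation*}
\|g(x_1) - g(x_2)\|_E \leq \tfrac{1}{2}\|x_1 - x_2\|_E, \qquad x_1, x_2 \in \overline{B}(0,2r).
\end{equation*}
For each fixed $y \in B(0,r)$ I then consider the map $T_y(x) := y + g(x) = y + x - f(x)$. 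A short calculation shows $T_y$ sends $\overline{B}(0,2r)$ into itself and is a $\tfrac{1}{2}$-contraction there, so Banach's fixed point theorem produces a unique $x \in \overline{B}(0,2r)$ with $f(x) = y$. Setting $\mathcal{V}_2 := B(0,r)$ and $\mathcal{V}_1 := f^{-1}(\mathcal{V}_2) \cap B(0,2r)$ (which is open since $f$ is continuous) gives a bijection $f : \mathcal{V}_1 \to \mathcal{V}_2$, and the two-sided contraction estimate on $g$ yields the Lipschitz bound $\|f^{-1}(y_1) - f^{-1}(y_2)\|_E \leq 2\|y_1 - y_2\|_E$.

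The final and main technical step will be to upgrade $f^{-1}$ from Lipschitz to $C^1$, which is where the Banach-space setting genuinely differs from finite dimensions. After possibly shrinking $r$ using the continuity of $df$ and the standard Neumann-series fact that the set $\mathrm{GL}(E) \subset \mathcal{B}(E,E)$ of invertible operators is open and that the inversion map $A \mapsto A^{-1}$ is continuous on $\mathrm{GL}(E)$, I may assume $df(x)$ is invertible for every $x \in \mathcal{V}_1$. Writing $x_i = f^{-1}(y_i)$ and using the identity $f(x_1) - f(x_2) = df(x_2)(x_1 - x_2) + o(\|x_1 - x_2\|_E)$ together with the Lipschitz bound on $f^{-1}$ to convert $o(\|x_1 - x_2\|_E)$ into $o(\|y_1 - y_2\|_E)$, one obtains that $f^{-1}$ is Fr\'echet differentiable at $y_2$ with $d(f^{-1})(y_2) = df(f^{-1}(y_2))^{-1}$. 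Continuity of $y \mapsto d(f^{-1})(y)$ then follows by composing the three continuous maps $y \mapsto f^{-1}(y)$, $x \mapsto df(x)$, $A \mapsto A^{-1}$, which establishes that $f^{-1} \in C^1(\mathcal{V}_2, \mathcal{V}_1)$ and completes the proof.

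The hard part will be the passage from Lipschitz continuity of $f^{-1}$ to its Fr\'echet differentiability with the correct derivative; all estimates must be made in the operator norm on $\mathcal{B}(E,F)$, and one has to be careful to replace the finite-dimensional determinant-based openness of $\mathrm{GL}$ by the Neumann series argument and to avoid any appeal to compactness, since $E$ may be infinite-dimensional. Once those ingredients are in hand the argument is essentially a formal composition.
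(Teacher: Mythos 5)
Your proof is correct and follows the standard contraction-mapping route. Note, however, that the paper does not prove this proposition at all --- it simply cites it as Theorem 1.3.2 of Kesavan's \emph{Nonlinear Functional Analysis: A First Course} and uses it as a black box in the proof of the Local Isomorphism Theorem. Your argument (normalize to $df(0)=\mathrm{Id}$, set $g = \mathrm{Id} - f$, shrink so $\|dg\| \le \tfrac12$, run Banach's fixed point theorem on $T_y = y + g$, deduce the Lipschitz bound on $f^{-1}$, then upgrade to Fr\'echet differentiability via the Neumann-series openness of $\mathrm{GL}(E)$) is exactly the classical proof in that reference and in essentially every functional-analysis text, so there is no methodological divergence to report. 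The one place to be a bit careful in a finished write-up is the assertion that $\mathcal{V}_1 = f^{-1}(\mathcal{V}_2) \cap B(0,2r)$ maps bijectively onto $\mathcal{V}_2$: you need the open ball $B(0,2r)$ rather than its closure so that $\mathcal{V}_1$ is open, and you should check that the unique fixed point of $T_y$ in $\overline{B}(0,2r)$ actually lies in the open ball (it does, because $\|T_y(x)\| \le \|y\| + \tfrac12\|x\| < r + r = 2r$), which you implicitly use but do not spell out.
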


To prove the Main Theorem \ref{dirac: main theorem rough version}, it suffices to show that there exists constant $\varepsilon_{1}>0$ such that $\mathcal{F}^{+}_{nonlinear}$ is in $C^{1}\big(B_{\mathcal{X}}(0,\varepsilon_{1}\big),\mathcal{Y}^{+})$ and $d\mathcal{F}^{+}_{nonlinear}(0)$ is an isomorphism from $\mathcal{X}$ to $\mathcal{Y}^{+}$.

Firstly, we note that the initial data to nonlinear solution map
\begin{align*}
B_{\mathcal{X}}(0,\varepsilon_{1})\subset\mathcal{X}\rightarrow \mathcal{W}: \phi_{0}\mapsto S(\phi_{0}),
\end{align*} 
is in $C^{1}(B_{\mathcal{X}}(0,\varepsilon_{1}),\mathcal{W})$ provided that $\varepsilon_{1}>0$ is sufficiently small.

\begin{lemma}\label{part1}
If $\varepsilon_{1}$ is small enough,  for any $\phi_{0},\psi_{0}\in B_{\mathcal{X}}(0,\varepsilon_{1})$, the differential map of the initial data to nonlinear solution map at $\phi_{0}$
\begin{align*}
d\mathcal{F}^{+}_{nonlinear}(\phi_{0}): \mathcal{X}\rightarrow \mathcal{W}: \psi_{0}\mapsto \psi,
\end{align*}
is given by the solution of 
\begin{equation}\label{dirac: linearization}
\begin{cases}
\mathcal{D}\psi=N(\phi,\psi)+N(\psi,\phi),\\
\psi(0,\cdot)=\psi_{0},
\end{cases}
\end{equation}
and we have
\begin{align*}
\big\|S(\phi_{0}+\psi_{0})-S(\phi_{0})-d\mathcal{F}^{+}_{nonlinear}(\phi_{0})\psi_{0}\big\|_{\mathcal{W}}\lesssim \|\psi_{0}\|^{2}_{\mathcal{X}}.
\end{align*}
Moreover, the differential map
\begin{align*}
B_{\mathcal{X}}(0,\varepsilon_{1})\rightarrow \mathcal{B}(\mathcal{X},\mathcal{W}), \phi_{0}\mapsto d\mathcal{F}^{+}_{nonlinear}(\phi_{0}),
\end{align*}
is continuous. In particular, the initial data to nonlinear solution map is in $C^{1}\big(B_{\mathcal{X}}(0,\varepsilon_{1}),\mathcal{W}\big)$.
\end{lemma}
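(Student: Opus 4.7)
The plan is to reduce Lemma \ref{part1} to a single auxiliary \emph{linear} energy estimate that parallels the Main Energy Estimate of Section 5. Throughout, write $\phi = S(\phi_0)$, $\tilde\phi = S(\phi_0 + \psi_0)$, and let $\psi$ denote the solution of the linearization (\ref{dirac: linearization}) (whose well-posedness is part of what must be proved). The auxiliary statement I would first establish is the following: for any background $\phi \in \mathcal{W}$ with $\|\phi\|_{\mathcal{W}} \le \varepsilon_1$ sufficiently small and any data $(\chi_0, \Phi) \in \mathcal{X} \times L^1(\mathbb{R}^+, \mathcal{X})$, the linear Cauchy problem
\begin{equation*}
\mathcal{D}\chi = N(\phi, \chi) + N(\chi, \phi) + \Phi, \qquad \chi(0, \cdot) = \chi_0,
\end{equation*}
admits a unique solution $\chi \in \mathcal{W}$ with $\|\chi\|_{\mathcal{W}} \lesssim \|\chi_0\|_{\mathcal{X}} + \|\Phi\|_{L^1(\mathbb{R}^+, \mathcal{X})}$. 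To prove this, I would commute each $\Gamma^\alpha$ with $|\alpha| \le N_*$ through the equation, invoke Lemma \ref{dirac: spinor null form} to preserve the null structure in $N(\Gamma^{\alpha_1}\phi, \Gamma^{\alpha_2}\chi)$ and its symmetric counterpart, and combine the classical energy identity (Lemma \ref{dirac: Classical energy estimate}) with the ghost-weight identity (Lemma \ref{dirac: Ghost weight}). Splitting each bilinear term through $I = P(\omega) + P(-\omega)$ exactly as in Section 5 and using the improved decay of $P(-\omega)\phi$ from Lemma \ref{dirac: decay estiamte} produces a Gronwall-type estimate
\begin{equation*}
\sum_{k \le N_*} E^k(\chi, t) + \sum_{k \le N_*} F^k(\chi, t) \lesssim \|\chi_0\|_{\mathcal{X}}^2 + \|\Phi\|_{L^1(\mathbb{R}^+, \mathcal{X})}^2 + \varepsilon_1 \Bigl( \sum_k E^k(\chi, t) + \sum_k F^k(\chi, t) \Bigr),
\end{equation*}
which closes for $\varepsilon_1$ small.

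Granted this auxiliary estimate, the three assertions of the lemma are mechanical. First, applying it with $\Phi = 0$ and $\chi_0 = \psi_0$ produces $\psi$ with $\|\psi\|_{\mathcal{W}} \lesssim \|\psi_0\|_{\mathcal{X}}$. Second, a parallel bootstrap applied to $\tilde\phi - \phi$, which satisfies $\mathcal{D}(\tilde\phi - \phi) = N(\phi, \tilde\phi - \phi) + N(\tilde\phi - \phi, \phi) + N(\tilde\phi - \phi, \tilde\phi - \phi)$ with data $\psi_0$, yields $\|\tilde\phi - \phi\|_{\mathcal{W}} \lesssim \|\psi_0\|_{\mathcal{X}}$. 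A direct subtraction then shows that the remainder $R = \tilde\phi - \phi - \psi$ solves
\begin{equation*}
\mathcal{D} R = N(\phi, R) + N(R, \phi) + N(\tilde\phi - \phi, \tilde\phi - \phi), \qquad R(0, \cdot) = 0,
\end{equation*}
so one more application of the auxiliary estimate with the quadratic source gives $\|R\|_{\mathcal{W}} \lesssim \|\tilde\phi - \phi\|_{\mathcal{W}}^2 \lesssim \|\psi_0\|_{\mathcal{X}}^2$, which is precisely the Fréchet derivative bound. For the continuity of $\phi_0 \mapsto dS(\phi_0)$, take $\phi_0^{(i)} \in B_{\mathcal{X}}(0, \varepsilon_1)$ with corresponding linearized solutions $\psi^{(i)}$ for a common datum $\psi_0$; the difference $\psi^{(1)} - \psi^{(2)}$ satisfies a linear equation with zero initial data and source $N(\phi^{(1)} - \phi^{(2)}, \psi^{(2)}) + N(\psi^{(2)}, \phi^{(1)} - \phi^{(2)})$, and the auxiliary estimate together with the Lipschitz bound $\|\phi^{(1)} - \phi^{(2)}\|_{\mathcal{W}} \lesssim \|\phi_0^{(1)} - \phi_0^{(2)}\|_{\mathcal{X}}$ gives $\|\psi^{(1)} - \psi^{(2)}\|_{\mathcal{W}} \lesssim \|\phi_0^{(1)} - \phi_0^{(2)}\|_{\mathcal{X}} \|\psi_0\|_{\mathcal{X}}$, hence continuity in operator norm.

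The principal technical obstacle is the auxiliary linear estimate. In contrast to the nonlinear problem, where the two arguments of $N(\phi, \phi)$ can both absorb derivatives from the background $\phi$, here the top-order terms $N(\Gamma^{\alpha_1}\phi, \Gamma^{\alpha_2}\chi)$ force the background factor to carry very few derivatives. One must carefully apportion the $P(\pm\omega)$ split so that the high-derivative factor of $\chi$ is always paired with either the improved decay of $P(-\omega)\phi$ or with the weighted spacetime norm $F^k$; the borderline cases where $|\alpha_2| \approx N_*$ are the most subtle, and it is here that the smallness $\|\phi\|_{\mathcal{W}} \le \varepsilon_1$ must be extracted as a multiplicative constant in front of the top-order energy in order to close the bootstrap.
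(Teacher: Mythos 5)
Your proposal is correct and follows essentially the same route as the paper: the same three Cauchy problems (for $S(\phi_{0}+\psi_{0})$, $S(\phi_{0})$, and the linearized solution $\psi$), the same subtractions producing first the Lipschitz bound for $S(\phi_{0}+\psi_{0})-S(\phi_{0})$ and then the remainder equation with quadratic source giving the $\|\psi_{0}\|_{\mathcal{X}}^{2}$ bound, and the same difference argument for continuity of the differential; your only organizational difference is packaging the repeated ``bootstrap arguments similar to Section five'' into one auxiliary linear estimate. One small caution: that auxiliary estimate should be stated for backgrounds which (approximately) solve the Dirac equation rather than for arbitrary small elements of $\mathcal{W}$, since the improved decay of $P(-\omega)\Gamma^{\alpha}\phi$ from Lemma \ref{dirac: decay estiamte} that your case analysis invokes is derived from the equation satisfied by $\phi$, not from membership in $\mathcal{W}$ alone — harmless here, as every background you use is a nonlinear solution (or difference of such).
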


\begin{proof}
Writting $\psi:=d\mathcal{F}^{+}_{nonlinear}(\phi_{0})\psi_{0}$, the estimates are based on the following three equations
\begin{itemize}
\item[(i)] Equation for $S(\phi_{0}+\psi_{0})$,
\begin{equation}\label{a}
\begin{cases}
\mathcal{D}S(\phi_{0}+\psi_{0})=N\big(S(\phi_{0}+\psi_{0}),S(\phi_{0}+\psi_{0})\big),\\
S(\phi_{0}+\psi_{0})(0,\cdot)=\phi_{0}+\psi_{0}.
\end{cases}
\end{equation}
\item[(ii)] Equation for $S(\phi_{0})$,
\begin{equation}\label{b}
\begin{cases}
\mathcal{D}S(\phi_{0})=N\big(S(\phi_{0}),S(\phi_{0})\big),\\
S(\phi_{0})(0,\cdot)=\phi_{0}.
\end{cases}
\end{equation}
\item[(iii)] Equation for $\psi$,
\begin{equation}\label{c}
\begin{cases}
\mathcal{D}\psi=N\big(S(\phi_{0}),\psi\big)+N\big(\psi,S(\phi_{0})\big),\\
\psi(0,\cdot)=\psi_{0}.
\end{cases}
\end{equation}
By bootstrap arguments similar to the nonlinear estimates in section five, we can obtain 
\begin{align*}
&\|\psi\|_{\mathcal{W}}\lesssim \|\psi_{0}\|_{\mathcal{X}}+\|S(\phi_{0})\|_{\mathcal{W}}\|\psi\|_{\mathcal{W}}.
\end{align*}
If $\varepsilon_{1}$ is sufficiently small, 
\begin{align*}
\|\psi\|_{\mathcal{W}}\lesssim \|\psi_{0}\|_{\mathcal{X}}.
\end{align*}
\end{itemize}
Substracting Eq. \eqref{b} from Eq. \eqref{a}, we can obtain
\begin{equation}\label{d}
\begin{cases}
&\mathcal{D}\big(S(\phi_{0}+\psi_{0})-S(\phi_{0})\big)=N\big(S(\phi_{0}),S(\phi_{0}+\psi_{0})-S(\phi_{0})\big)\\&+N\big(S(\phi_{0}+\psi_{0})-S(\phi_{0}),S(\phi_{0})\big),\\
&+N\big(S(\phi_{0}+\psi_{0})-S(\phi_{0}),S(\phi_{0}+\psi_{0})-S(\phi_{0})\big),\\
&\big(S(\phi_{0}+\psi_{0})-S(\psi_{0})\big)(0,\cdot)=\psi_{0}.
\end{cases}
\end{equation}
Note that we have arranged the nonlinear terms so that they are either linear in $S(\phi_{0}+\psi_{0})-S(\phi_{0})$ or quadratic in $S(\phi_{0}+\psi_{0})-S(\phi_{0})$. By bootstrap arguments similar to the nonlinear estimates in section five, we can obtain 
\begin{align*}
\|S(\phi_{0}+\psi_{0})-S(\phi_{0})\|_{\mathcal{W}}&\lesssim \|\psi_{0}\|_{\mathcal{W}}+\|S(\phi_{0})\|_{\mathcal{W}}\|S(\phi_{0}+\psi_{0})-S(\phi_{0})\|_{\mathcal{W}}\\
&+\|S(\phi_{0}+\psi_{0})-S(\phi_{0})\|^{2}_{\mathcal{W}}.
\end{align*}
If $\varepsilon_{1}$ is sufficiently small, 
\begin{align*}
\|S(\phi_{0}+\psi_{0})-S(\phi_{0})\|_{\mathcal{W}}\lesssim \|\psi_{0}\|_{X}.
\end{align*}
Substracting Eq. \eqref{c} from Eq. \eqref{d}, we can obtain 
\begin{equation*}
\begin{cases}
&\mathcal{D}\big(S(\phi_{0}+\psi_{0})-S(\phi_{0})-\psi\big)=N\big(S(\phi_{0}),S(\phi_{0}+\psi_{0})-S(\phi_{0})-\psi\big)\\
&+N\big(S(\phi_{0}+\psi_{0})-S(\phi_{0})-\psi,S(\phi_{0})\big)\\
&+N\big(S(\phi_{0}+\psi_{0})-S(\phi_{0}),S(\phi_{0}+\psi_{0})-S(\phi_{0})\big),\\
&\big(S(\phi_{0}+\psi_{0})-S(\phi_{0})-\psi\big)(0,\cdot)=0.
\end{cases}
\end{equation*}
Note that we have arranged the nonlinear terms so that they are either linear in $S(\phi_{0}+\psi_{0})-S(\phi_{0})-\psi$ or quadratic in $S(\phi_{0}+\psi_{0})-S(\phi_{0})$. By bootstrap arguments similar to the nonlinear estimates in section five, we can obtain 
\begin{align*}
\|S(\phi_{0}+\psi_{0})-S(\phi_{0})-\psi\|_{\mathcal{W}}&\lesssim \|S(\phi_{0}+\psi_{0})-S(\phi_{0})-\psi\|_{\mathcal{W}}\|S(\phi_{0})\|_{\mathcal{W}}\\
&+\|S(\phi_{0}+\psi_{0})-S(\phi_{0})\|^{2}_{\mathcal{W}}.
\end{align*}
If $\varepsilon_{1}$ is sufficiently small, 
\begin{align*}
\|S(\phi_{0}+\psi_{0})-S(\phi_{0})-\psi\|_{\mathcal{W}}\lesssim \|S(\phi_{0}+\psi_{0})-S(\phi_{0})\|^{2}_{\mathcal{W}}\lesssim \|\psi_{0}\|_{\mathcal{X}}^{2},
\end{align*}
which shows that the solution map of Eq. \eqref{dirac: linearization} is indeed the differential map of nonlinear solution map $\phi_{0}\mapsto S(\phi_{0})$ at $\phi_{0}\in B_{\mathcal{X}}(0,\varepsilon_{0})$ providing that $\varepsilon_{0}$ is sufficiently small. 

To prove the differential map is continuous, consider equations
\begin{equation}\label{q}
\begin{cases}
\mathcal{D}\psi=N\big(S(\phi_{0}),\psi\big)+N\big(\psi,S(\phi_{0})\big),\\
\psi(0,\cdot)=\psi_{0},
\end{cases}
\end{equation}
and
\begin{equation}\label{w}
\begin{cases}
\mathcal{D}\psi^{'}=N\big(S(\phi^{'}_{0}),\psi^{'}\big)+N\big(\psi^{'},S(\phi^{'}_{0})\big),\\
\psi^{'}(0,\cdot)=\psi^{'}_{0}.
\end{cases}
\end{equation}
Substracting Eq. \eqref{q} from Eq. \eqref{w}, by bootstrap arguments similar to the nonlinear estimates in section five, we can obtain 
\begin{align*}
\|\psi-\psi^{'}\|_{L^{\infty}(\mathbb{R}^{+},\mathcal{X})}\lesssim \|\psi_{0}-\psi^{'}_{0}\|_{\mathcal{X}}+\|\phi_{0}-\phi^{'}_{0}\|_{\mathcal{X}},
\end{align*} 
provided that $\varepsilon_{1}$ is sufficiently small.
\end{proof}

Secondly, the nonlinear solution to source term map is in $C^{1}\big(\mathcal{W},L^{1}(\mathbb{R}^{+},\mathcal{X})\big)$.
\begin{lemma}\label{part2}
The nonlinear solution to source term map:
\begin{align*}
\mathcal{W}\rightarrow L^{1}(\mathbb{R}^{+},\mathcal{X}): \phi \mapsto N(\phi,\phi),
\end{align*}
is in $C^{1}\big(\mathcal{W},L^{1}(\mathbb{R}^{+},\mathcal{X})\big)$. The differential is given by
\begin{align*}
dN(\phi,\phi): &\ \mathcal{W}\rightarrow L^{1}(\mathbb{R}^{+},\mathcal{X}),\\
                          & \psi\mapsto N(\phi,\psi)+N(\psi,\phi).
\end{align*}
\end{lemma}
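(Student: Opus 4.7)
The plan is to prove the three standard ingredients for $C^1$-differentiability of a bilinear map applied diagonally: (a) a genuinely bilinear estimate on $N$, (b) Fréchet differentiability at each base point, and (c) continuity of the derivative in the base point. Everything will be reduced to a single bilinear estimate, after which the rest is bookkeeping.

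First I would upgrade the quadratic bound $\|N(\phi,\phi)\|_{L^{1}(\mathbb{R}^{+},\mathcal{X})}\lesssim\|\phi\|_{\mathcal{W}}^{2}$ to the bilinear form
\begin{equation*}
\|N(\phi_{1},\phi_{2})\|_{L^{1}(\mathbb{R}^{+},\mathcal{X})}\lesssim \|\phi_{1}\|_{\mathcal{W}}\,\|\phi_{2}\|_{\mathcal{W}}.
\end{equation*}
The quickest route is to revisit the bulk-integral bounds in section five: the null form decomposition $N(\Gamma^{\alpha_{1}}\phi_{1},\Gamma^{\alpha_{2}}\phi_{2})=N(P(-\omega)\Gamma^{\alpha_{1}}\phi_{1},P(\omega)\Gamma^{\alpha_{2}}\phi_{2})+N(P(\omega)\Gamma^{\alpha_{1}}\phi_{1},P(-\omega)\Gamma^{\alpha_{2}}\phi_{2})$ is bilinear in its two slots, and each of the two cases treated there (improved $P(-\omega)$-decay on one factor; Klainerman--Sobolev decay on the low-order factor combined with the ghost-weight space-time norm on the other) never exploits the two slots being the same spinor. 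Alternatively one may polarize: the symmetric and anti-symmetric parts of $N$ are recovered from the diagonal estimate applied to $\phi_{1}\pm\phi_{2}$.

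With the bilinear bound in hand, Fréchet differentiability at any $\phi\in\mathcal{W}$ with candidate derivative $T_{\phi}\psi:=N(\phi,\psi)+N(\psi,\phi)$ follows from the algebraic identity
\begin{equation*}
N(\phi+\psi,\phi+\psi)-N(\phi,\phi)-T_{\phi}\psi=N(\psi,\psi),
\end{equation*}
whose right-hand side is controlled by $\|\psi\|_{\mathcal{W}}^{2}=o(\|\psi\|_{\mathcal{W}})$ in $L^{1}(\mathbb{R}^{+},\mathcal{X})$. The map $T_{\phi}$ is manifestly linear in $\psi$ and, by the same bilinear estimate, bounded with $\|T_{\phi}\|_{\mathcal{B}(\mathcal{W},L^{1}(\mathbb{R}^{+},\mathcal{X}))}\lesssim \|\phi\|_{\mathcal{W}}$, so $dN(\phi,\phi)=T_{\phi}$. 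Continuity of $\phi\mapsto T_{\phi}$ is another one-line consequence of bilinearity:
\begin{equation*}
(T_{\phi}-T_{\phi'})\psi=N(\phi-\phi',\psi)+N(\psi,\phi-\phi'),
\end{equation*}
so $\|T_{\phi}-T_{\phi'}\|_{\mathcal{B}(\mathcal{W},L^{1}(\mathbb{R}^{+},\mathcal{X}))}\lesssim \|\phi-\phi'\|_{\mathcal{W}}$, which gives the $C^{1}$ (indeed Lipschitz) dependence.

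The only non-cosmetic obstacle is the first step: verifying that the null-structure argument of section five really does yield the off-diagonal bilinear estimate. In the two regimes used there one factor supplies the pointwise $L^{\infty}$ decay through the Klainerman--Sobolev inequality and the other supplies either an energy $E^{k}$ or the ghost-weight space-time integral $F^{k}$; both of these quantities are controlled by $\|\cdot\|_{\mathcal{W}}$, and the choice of which factor plays which role depends only on the order of the derivative falling on it, not on which of the two original spinors it comes from. A direct re-reading with the formal substitution $\phi\to \phi_{1},\phi_{2}$ in the appropriate slots should therefore close the argument without any new ingredient.
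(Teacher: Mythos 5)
Your argument is correct and is essentially the paper's own: the proof there consists precisely of the identity $N(\phi+\psi,\phi+\psi)-N(\phi,\psi)-N(\psi,\phi)-N(\phi,\phi)=N(\psi,\psi)$ together with the (implicit) bilinear boundedness of $N$ from $\mathcal{W}\times\mathcal{W}$ to $L^{1}(\mathbb{R}^{+},\mathcal{X})$. Your additional verification of the off-diagonal bilinear estimate (via polarization or re-reading the section-five null-form bounds) and of the Lipschitz continuity of $\phi\mapsto dN(\phi,\phi)$ simply fills in details the paper leaves tacit.
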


\begin{proof}
It suffices to note that
\begin{align*}
N(\phi+\psi,\phi+\psi)-N(\phi,\psi)-N(\psi,\phi)-N(\phi,\phi)=N(\psi,\psi).
\end{align*}
\end{proof}

Lastly, the initial data and source term to radiation field map is $C^{1}$ since it is bilinear. By chain rule, the nonlinear radiation field map 
\begin{align*}
\mathcal{F}^{+}_{nonlinear}: \phi_{0}\mapsto \mathcal{F}^{+}\Big(\phi_{0},N\big(S(\phi_{0}),S(\phi_{0})\big)\Big),
\end{align*}
is in $C^{1}\big(B_{\mathcal{X}}(0,\varepsilon_{1}),\mathcal{Y}^{+}\big)$ whose differential map at $\phi_{0}$ is given by
\begin{align*}
d\mathcal{F}^{+}_{nonlinear}(\phi_{0})|\psi_{0}&=\mathcal{F}_{Dirac}^{+}\Big(\psi_{0},N\big(S(\phi_{0}),S(\phi_{0})\big)\Big)\\
&+\mathcal{F}_{Dirac}^{+}\Big(\phi_{0},N\big(S(\phi_{0}),\psi)+N(\psi,S(\phi_{0})\big)\Big),
\end{align*}
where $\psi$ is the solution of equation
\begin{equation*}
\begin{cases}
\mathcal{D}\psi=N\big(S(\phi_{0}),\psi\big)+N\big(\psi,S(\phi_{0})\big),\\
\psi(0,\cdot)=\psi_{0}.
\end{cases}
\end{equation*}
In particular, when $\phi_{0}=0$, $S(\phi_{0})=0$,
\begin{align*}
d\mathcal{F}^{+}_{nonlinear}(0)\psi_{0}=\mathcal{F}_{Dirac}^{+}(\psi_{0},0),
\end{align*}
which is an isomorphism $\mathcal{X}\rightarrow \mathcal{Y}^{+}$ in view of theorem \ref{dirac: linear isomorphism theorem}. By inverse function theorem in Banach space \ref{qwer}, there exists $\varepsilon_{2}>0$ such that 
\begin{align*}
\mathcal{F}^{+}_{nonlinear}: B_{\mathcal{X}}(0,\varepsilon_{2})\subset\mathcal{X}\rightarrow \mathcal{Y}^{+}
\end{align*}
is a local $C^{1}$ diffeomorphism.

\subsection{Justification of nonlinear radiation field}
At last, we show that the nonlinear radiation field map $\mathcal{F}^{+}_{nonlinear}(\phi_{0})$ defined in this paper coincides with 
\begin{align*}
\lim_{r\rightarrow+\infty}r\phi(s+r,r\omega),
\end{align*}
for almost every $(s,\omega)\in \mathbb{R}\times\mathbb{S}^{2}$, where $\phi$ is the solution of Eq. \eqref{dirac: semi-linear dirac equation} with initial data $\phi_{0}$ when $\phi_{0}$ is smooth.

\begin{proposition}\label{dirac: the same definition}
If $\phi_{0}\in C^{\infty}(\mathbb{R}^{3},\mathbb{C}^{4})$ with $\|\phi_{0}\|_{\mathcal{X}}\leq \varepsilon_{0}$, then
\begin{align*}
\lim_{r\rightarrow +\infty}r\phi(r+s,\omega)=\mathcal{F}^{+}_{nonlinear}(\phi_{0})(s,\omega).
\end{align*}
for almost everywhere $(s,\omega)\in\mathbb{R}\times\mathbb{S}^{2}$.
\end{proposition}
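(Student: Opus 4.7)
The strategy is to split off the linear part via Duhamel and then reduce the nonlinear Duhamel piece to the classical Friedlander situation by approximation of the source. Since $\phi_0\in C_0^\infty$ with $\|\phi_0\|_{\mathcal{X}}\le\varepsilon_0$, the Main Energy Estimate yields a smooth global solution $\phi$ with $N(\phi,\phi)\in L^1(\mathbb{R}^+,\mathcal{X})$. I write $\phi = \phi^L+\phi^D$, where $\phi^L(t,\cdot)=U(t)\phi_0$ and $\phi^D(t,\cdot)=\int_0^t U(t-\tau)N(\phi,\phi)(\tau,\cdot)\,d\tau$, and use the bilinearity in Lemma~\ref{dirac: bilinear map} to split
\[
\mathcal{F}^+_{nonlinear}(\phi_0)(s,\omega) = \mathcal{F}^+_{Dirac}(\phi_0,0)(s,\omega) + \int_0^{+\infty}\mathcal{F}^+_{Dirac}(N(\phi,\phi)(\tau,\cdot),0)(s-\tau,\omega)\,d\tau
\]
as an identity in $\mathcal{Y}^+\subset L^2(\mathbb{R}\times\mathbb{S}^2,\mathbb{C}^4)$. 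For the free part $\phi^L$, the hypothesis $\phi_0\in C_0^\infty$ lets Lemma~\ref{dirac: dirac profile} apply directly, giving $\lim_{r\to\infty}r\phi^L(r+s,r\omega)=\mathcal{F}^+_{Dirac}(\phi_0,0)(s,\omega)$ pointwise for every $(s,\omega)$, which takes care of the first summand.

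For the Duhamel part I approximate the source: pick $\Phi_n\in C_0^\infty(\mathbb{R}^+\times\mathbb{R}^3,\mathbb{C}^4)$ with $\|\Phi_n-N(\phi,\phi)\|_{L^1(\mathbb{R}^+,\mathcal{X})}\to 0$, and set $\tilde\phi_n(t,\cdot):=\int_0^t U(t-\tau)\Phi_n(\tau,\cdot)\,d\tau$. By finite propagation speed each $\tilde\phi_n(t,\cdot)$ has compact spatial support, so Lemma~\ref{dirac: dirac profile} (applied after a time-shift past the temporal support of $\Phi_n$, together with Duhamel) yields the pointwise identity
\[
\lim_{r\to\infty}r\tilde\phi_n(r+s,r\omega) = \int_0^{+\infty}\mathcal{F}^+_{Dirac}(\Phi_n(\tau,\cdot),0)(s-\tau,\omega)\,d\tau
\]
for all $(s,\omega)$. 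Continuity of the bilinear map (Lemma~\ref{dirac: bilinear map}) gives $\mathcal{F}^+_{Dirac}(0,\Phi_n)\to\mathcal{F}^+_{Dirac}(0,N(\phi,\phi))$ in $\mathcal{Y}^+$, hence pointwise a.e.\ along a subsequence.

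To pass to the limit on the spacetime side I set $\psi_n:=\phi^D-\tilde\phi_n$ and prove the uniform-in-$r$ bound
\[
\|r\psi_n(r+s,r\omega)\|_{L^2(\mathbb{R}\times\mathbb{S}^2)}\lesssim \|\Phi_n-N(\phi,\phi)\|_{L^1(\mathbb{R}^+,L^2(\mathbb{R}^3))}\xrightarrow{n\to\infty}0,
\]
which combined with the pointwise statement for $\tilde\phi_n$ and a standard diagonal extraction in $(n,r)$ identifies $\lim_{r\to\infty}r\phi^D(r+s,r\omega)$ with the integral expression for a.e.\ $(s,\omega)$; adding the linear contribution completes the proof. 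The main difficulty lies in this uniform bound: at finite $r$ one lacks the exact Parseval identity of Lemma~\ref{dirac: 0-order energy}, so one needs a Morawetz-type boundedness $\|rU(r+s)f(r\omega)\|_{L^2(\mathbb{R}\times\mathbb{S}^2)}\lesssim\|f\|_{L^2(\mathbb{R}^3)}$ for the free Dirac flow (derivable by integrating the charge conservation law on truncated solid cones and letting $t_0\to\infty$ as in Lemma~\ref{dirac: 0-order energy}), which then propagates through the Duhamel integral by Minkowski's inequality in $\tau$. Once this estimate is in hand, the double-limit interchange and the final a.e.\ identification are routine.
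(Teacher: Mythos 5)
There is a genuine gap: your argument never establishes that the pointwise limit $\lim_{r\to+\infty}r\phi(r+s,r\omega)$ actually \emph{exists} for a.e.\ $(s,\omega)$ — which is the heart of the statement (the paper even defines this limit through the formula $r\phi(|s|+s,|s|\omega)+\int_{|s|}^{+\infty}(\partial_t+\partial_r)\bigl(r\phi(s+r,r\omega)\bigr)dr$, so one must show the integrand is integrable along a.e.\ outgoing ray). Your scheme controls the error $r\psi_n(r+s,r\omega)$ only in $L^2_{s,\omega}$, \emph{uniformly in} $r$; together with the pointwise convergence of $r\tilde\phi_n$ this yields at best that $r\phi^D(r+\cdot,r\cdot)$ is Cauchy in $L^2(\mathbb{R}\times\mathbb{S}^2)$ as $r\to\infty$, hence a.e.\ convergence only along a subsequence $r_k\to\infty$. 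A uniform-in-$r$ $L^2$ bound does not prevent the bad set of $h^{(n)}_r:=r\psi_n(r+\cdot,r\cdot)$ from moving around with $r$, so at a fixed $(s,\omega)$ the full limit over continuous $r$ need not exist; no ``diagonal extraction'' repairs this. The paper's proof supplies exactly the missing ingredient: using the spinor null structure and the improved decay $|P(-\omega)\Gamma^{\alpha}\phi|\lesssim\varepsilon(1+t+|x|)^{-3/2}$ from Lemma \ref{dirac: decay estiamte}, it proves the pointwise bound $|(\partial_t+\partial_r)(rP(\omega)\phi)|\lesssim(1+t+|x|)^{-3/2}$ (Lemma \ref{dirac: 123}), which gives existence of the limit at \emph{every} $(s,\omega)$ together with the quantitative rate $M^{-1/2}$; the identification with $\mathcal{F}^{+}_{nonlinear}(\phi_0)$ then follows by cutting off data and source, using $\mathcal{Y}^{+}$-convergence upgraded to $L^{\infty}_{loc}$ by Sobolev embedding on $\mathbb{R}\times\mathbb{S}^2$, the $M^{-1/2}$ rate for both $\phi$ and the truncated solutions, and finite speed of propagation. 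Any correct proof must use this (or an equivalent) pointwise estimate exploiting the null form; treating $N(\phi,\phi)$ merely as an $L^1_t\mathcal{X}$ source, as you do, loses precisely the information needed for the a.e.\ pointwise statement.

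A secondary weak point is the asserted ``Morawetz-type'' bound $\|rU(r+s)f(r\omega)\|_{L^2(\mathbb{R}\times\mathbb{S}^2)}\lesssim\|f\|_{L^2(\mathbb{R}^3)}$ uniformly in $r$. This is a trace estimate on the \emph{timelike} cylinder $\{|x|=r\}$, and it does not follow ``by integrating the charge conservation law on truncated solid cones'' as you claim: the charge flux through that cylinder is $|P(\omega)\phi|^2-|P(-\omega)\phi|^2$, which is not sign-definite, and the paper's identities (Lemmas \ref{dirac: energy and flux}, \ref{dirac: conservation2 incoming}, \ref{dirac: 0-order energy}) control fluxes through \emph{null} cones, not cylinders. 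The estimate may well be true for the free Dirac flow, but it requires a separate proof, and in any case it would only feed the $L^2$-level argument, which, as explained above, cannot deliver the pointwise conclusion.
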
 
By integrating along the outgoing light rays, we can prove the existence of the limit $\lim_{r\rightarrow +\infty}r\phi(r+s,\omega)$,
\begin{lemma}\label{dirac: 123}
If $\phi(t,x):\mathbb{R}^{1+3}\rightarrow \mathbb{C}^{4}$ is smooth and 
\begin{itemize}
\item[(i)] $\|\phi\|_{L^{\infty}(\mathbb{R}^{+},\mathcal{X})}\lesssim 1$,
\item[(ii)] $|x||\mathcal{D}\phi|\lesssim (1+t+|x|)^{-\frac{3}{2}},t\geq 0$,
\end{itemize}
then we have
\begin{align*}
|(\partial_{t}+\partial_{r})(rP(\omega)\phi)|\lesssim (1+t+|x|)^{-\frac{3}{2}}.
\end{align*}
In particular, by integrating along the outgoing light rays, we can conclude that $\lim_{r\rightarrow +\infty}r\phi(r+s,\omega)$ exists for any $(s,\omega)\in\mathbb{R}\times\mathbb{S}^{2}$. For any $M\geq 1$, we have the convergence rate estimate:
\begin{align*}
|\lim_{r\rightarrow+\infty}r\phi(r+s,r\omega)-\frac{M-s}{2}\phi(\frac{M+s}{2},\frac{M-s}{2}\omega)|\lesssim M^{-\frac{1}{2}},\  \forall\ |s|\leq M.
\end{align*}
\end{lemma}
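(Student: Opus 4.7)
The plan is to derive a pointwise identity for the outgoing null derivative $L(rP(\omega)\phi)$, where $L=\partial_{t}+\partial_{r}$, in terms of $r\mathcal{D}\phi$ and angular derivatives of $\phi$, and then bound each piece using the hypotheses and Klainerman--Sobolev. Starting from $\gamma^{0}\mathcal{D}=\partial_{t}+\gamma^{0}\gamma^{i}\partial_{i}$ and decomposing $\partial_{i}=\omega^{i}\partial_{r}+\nabla_{\omega^{i}}$ with $\nabla_{\omega^{i}}=\partial_{i}-\omega^{i}\partial_{r}$, together with the algebraic identity $\gamma^{0}\gamma^{i}\omega_{i}=2P(\omega)-I$, I obtain the null decomposition $\gamma^{0}\mathcal{D}\phi=\underline{L}\phi+2P(\omega)\partial_{r}\phi+\gamma^{0}\gamma^{i}\nabla_{\omega^{i}}\phi$. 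Solving for $L\phi$, applying $P(\omega)$ (so that $P(\omega)P(-\omega)=0$ eliminates the $\partial_{r}\phi$ contribution), multiplying by $r$, and using $LP(\omega)=0$ together with the standard identity $r\nabla_{\omega^{i}}=\omega^{j}\Omega_{ji}$, I arrive at
\begin{align*}
L(rP(\omega)\phi) = P(\omega)\phi + rP(\omega)\gamma^{0}\mathcal{D}\phi - P(\omega)\gamma^{0}\gamma^{i}\omega^{j}\Omega_{ji}\phi.
\end{align*}

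Next I would bound each term. The middle term is immediate from hypothesis~(ii): $r\,|P(\omega)\gamma^{0}\mathcal{D}\phi|\le r\,|\mathcal{D}\phi|\lesssim(1+t+|x|)^{-3/2}$. For the remaining two, I decompose $\Omega_{ji}=\tilde{\Omega}_{ji}+\frac{1}{2}\gamma^{j}\gamma^{i}$ with $\tilde{\Omega}_{ji}\in\mathcal{R}\subset\mathcal{A}$. Using the Dirac-algebra identity $\sum_{i}\gamma^{i}\gamma^{j}\gamma^{i}=\gamma^{j}$ and summing against $\omega^{j}$ gives $\frac{1}{2}P(\omega)\gamma^{0}\gamma^{i}\omega^{j}\gamma^{j}\gamma^{i}=\frac{1}{2}P(\omega)(2P(\omega)-I)=\frac{1}{2}P(\omega)$ as an operator on spinors, producing a partial cancellation that reduces the identity to
\begin{align*}
L(rP(\omega)\phi) = \tfrac{1}{2}P(\omega)\phi + rP(\omega)\gamma^{0}\mathcal{D}\phi - P(\omega)\gamma^{0}\gamma^{i}\omega^{j}\tilde{\Omega}_{ji}\phi.
\end{align*}
In the homogeneous case $\mathcal{D}\phi=0$, the residual combination $\frac{1}{2}P(\omega)\phi-P(\omega)\gamma^{0}\gamma^{i}\omega^{j}\tilde{\Omega}_{ji}\phi$ is $O(1/r^{2})$ thanks to the null-infinity constraint $P(\omega)\gamma^{0}\gamma^{i}\partial_{\omega^{i}}\xi_{1}=\xi_{1}$, obtained by matching the $1/r^{2}$ coefficient in $\mathcal{D}\phi=0$ against the Friedlander-type expansion $\phi=r^{-1}\xi_{1}+r^{-2}\xi_{2}$ of Lemma~\ref{dirac: dirac profile}. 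For general $\phi$ this residue is controlled by $r|\mathcal{D}\phi|$ together with the improved peeling $|P(-\omega)\phi|\lesssim(1+t+|x|)^{-3/2}$, which I would establish by integrating the incoming identity $\underline{L}(P(-\omega)\phi)=P(-\omega)\gamma^{0}\mathcal{D}\phi-P(-\omega)\gamma^{0}\gamma^{i}\nabla_{\omega^{i}}\phi$ along incoming rays from $(t,x)$ back into the region $|x|\ge t/2$, exactly as in the proof of Lemma~\ref{dirac: decay estiamte}, using hypothesis~(ii) on the source and Klainerman--Sobolev (Lemma~\ref{dirac: KS inequality}) with hypothesis~(i) to bound $|\nabla_{\omega}\phi|\lesssim r^{-1}|\Omega\phi|$.

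Assembling these bounds yields the pointwise estimate $|L(rP(\omega)\phi)|\lesssim(1+t+|x|)^{-3/2}$. For the existence of the limit and the convergence rate, I run the analogous calculation for $L(rP(-\omega)\phi)$ (which is simpler thanks to improved $P(-\omega)\phi$ decay) to also obtain $|L(rP(-\omega)\phi)|\lesssim(1+t+|x|)^{-3/2}$, and hence $|L(r\phi)|\lesssim(1+t+|x|)^{-3/2}$. Parametrising the outgoing ray by $(t,x)=(s+r,r\omega)$ so that $\frac{d}{dr}[r\phi(s+r,r\omega)]=L(r\phi)|_{(s+r,r\omega)}$, integrability in $r$ gives existence of the limit, and for $|s|\le M$ with $M\ge 1$ and $R=(M-s)/2$,
\begin{align*}
\Bigl|\lim_{r\to\infty}r\phi(s+r,r\omega)-R\phi(s+R,R\omega)\Bigr|\le\int_{R}^{\infty}|L(r\phi)|\,dr\lesssim\int_{R}^{\infty}(1+2r+s)^{-3/2}\,dr\lesssim M^{-1/2}.
\end{align*}
The main obstacle is the algebraic cancellation in the second step: a naive pointwise bound on $P(\omega)\phi$ yields only $(1+t+|x|)^{-1}(1+|t-|x||)^{-1/2}$, which is borderline non-integrable along the outgoing null direction, so the $(1+t+|x|)^{-3/2}$ rate emerges only after combining the Dirac-algebra cancellation with the improved peeling of $P(-\omega)\phi$ supplied by hypothesis~(ii), reflecting the null structure at future null infinity encoded in the Dirac equation.
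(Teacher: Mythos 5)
Your opening identity $L(rP(\omega)\phi)=P(\omega)\phi+rP(\omega)\gamma^{0}\mathcal{D}\phi-P(\omega)\gamma^{0}\gamma^{i}\omega^{j}\Omega_{ji}\phi$ is correct and is essentially the paper's starting point, but the way you close the pointwise estimate has a genuine gap. After splitting $\Omega_{ji}=\tilde{\Omega}_{ji}+\tfrac12\gamma^{j}\gamma^{i}$ you are left with the residue $\tfrac12 P(\omega)\phi-P(\omega)\gamma^{0}\gamma^{i}\omega^{j}\tilde{\Omega}_{ji}\phi$, and you assert that it is ``controlled by $r|\mathcal{D}\phi|$ together with the improved peeling $|P(-\omega)\phi|\lesssim(1+t+|x|)^{-3/2}$''. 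That assertion is unsubstantiated and, as stated, insufficient: the residue contains first-order modified-rotation derivatives of $\phi$ carrying the projection $P(\omega)$, and neither $|\mathcal{D}\phi|$ nor the zeroth-order quantity $|P(-\omega)\phi|$ bounds them; their naive size from Klainerman--Sobolev and hypothesis (i) is only $(1+t+r)^{-1}(1+|t-r|)^{-1/2}$, exactly the borderline non-integrable rate you yourself identify as the obstacle. The missing step is the algebraic one the paper uses: the commutation $P(\omega)\gamma^{0}\gamma^{i}=\gamma^{0}\gamma^{i}P(-\omega)+\omega^{i}I$ together with the antisymmetry cancellation $\sum_{i\neq j}\omega^{i}\omega^{j}\bigl(\Omega_{ji}-\tfrac12\gamma^{j}\gamma^{i}\bigr)=0$, which rewrites the angular term as $\gamma^{0}\omega^{j}\gamma^{i}P(-\omega)\bigl(\Omega_{ji}-\tfrac12\gamma^{j}\gamma^{i}\bigr)\phi$ and in fact removes the zeroth-order $P(\omega)\phi$ piece entirely (with the natural $i\neq j$ bookkeeping your ``partial'' cancellation is actually complete). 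Only after this rearrangement can one apply the improved peeling estimate at the level of \emph{first} derivatives, i.e.\ the analogue of \eqref{dirac: 222} for $P(-\omega)\Gamma\phi$ with $\Gamma$ a modified rotation, which is what the paper invokes; establishing peeling only for $P(-\omega)\phi$ itself, as you propose, does not suffice. Your appeal to the constraint $P(\omega)\gamma^{0}\gamma^{i}\partial_{\omega^{i}}\xi_{1}=\xi_{1}$ via Lemma \ref{dirac: dirac profile} is only heuristic here, since that lemma concerns the free equation with compactly supported data and is not available for a general $\phi$ satisfying (i)--(ii).

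There is also a secondary problem in your passage to the limit: you propose to run ``the analogous calculation'' for $L(rP(-\omega)\phi)$, but that computation produces the term $2rP(-\omega)\partial_{r}\phi$, again a first-order quantity with $P(-\omega)$ on a derivative of $\phi$, which is not controlled by the tools you allow yourself; it is not simpler. The conclusion is reached more directly: once $|L(rP(\omega)\phi)|\lesssim(1+t+r)^{-3/2}$ is known, integrate along the outgoing ray to obtain the limit of $rP(\omega)\phi$ with rate $M^{-1/2}$, and note that the zeroth-order peeling $|P(-\omega)\phi|\lesssim(1+t+r)^{-3/2}$ already gives $|rP(-\omega)\phi|\lesssim(1+t+r)^{-1/2}\lesssim M^{-1/2}$ along the ray, so $\lim_{r\to\infty}r\phi=\lim_{r\to\infty}rP(\omega)\phi$ and the stated convergence rate follows; your final integral computation itself is fine.
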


\begin{proof}
In view of identity $\partial_{i}=\omega_{i}\partial_{r}+(\partial_{i}-\omega_{i}\partial_{r})$, we have
\begin{align*}
\sum_{i\not=j}\gamma^{i}\omega^{j}(x_{j}\partial_{i}-x_{i}\partial_{j}-\frac{1}{2}\gamma_{j}\gamma_{i})=\gamma^{i}r(\partial_{i}-\omega_{i}\partial_{r})-\gamma^{i}\omega_{i},
\end{align*}
therefore,
\begin{align*}
(\partial_{t}+\partial_{r})(rP(\omega)\phi)&=-\gamma^{0}\gamma^{i}r(\partial_{i}-\omega_{i}\partial_{r})\phi+P(\omega)\phi+\mathcal{D}\phi,\\
&=-\sum_{i\not= j}\gamma^{0}\gamma^{i}\omega^{j}(x_{j}\partial_{i}-x_{i}\partial_{j}-\frac{1}{2}\gamma_{j}\gamma_{i})\phi,\\&
+P(-\omega)\phi+\mathcal{D}\phi,
\end{align*}
multiply $P(\omega)$ on both sides, the LHS equals
\begin{align*}
(\partial_{t}+\partial_{r})(rP(\omega)\phi)&=-\sum_{i\not=j}\gamma^{0}\gamma^{i}P(-\omega)\omega^{j}(x_{j}\partial_{i}-x_{i}\partial_{j}-\frac{1}{2}\gamma_{j}\gamma_{i})\phi\\
&-\sum_{i\not=j}\omega^{i}\omega^{j}(x_{j}\partial_{i}-x_{i}\partial_{j}-\frac{1}{2}\gamma_{j}\gamma_{i})\phi+P(\omega)\mathcal{D}\phi,\\
&=-\sum_{i\not=j}\gamma^{0}\omega^{j}\gamma^{i}P(-\omega)(x_{j}\partial_{i}-x_{i}\partial_{j}-\frac{1}{2}\gamma_{j}\gamma_{i})\phi\\
&+P(\omega)\mathcal{D}\Phi.
\end{align*}
In view of Eq. \eqref{dirac: 222}, we have improved decay of
\begin{align*}
|P(-\omega)(x_{j}\partial_{i}-x_{i}\partial_{j}-\frac{1}{2}\gamma_{j}\gamma_{i})\phi|\lesssim \frac{\|\phi\|_{L^{\infty}(\mathbb{R}^{+},\mathcal{X})}}{(1+t+|x|)^{\frac{3}{2}}},
\end{align*}
therefore, we have
\begin{align*}
|(\partial_{t}+\partial_{r})(rP(\omega)\phi)|\lesssim  (1+t+|x|)^{-\frac{3}{2}}.
\end{align*}
\end{proof}

We use approximation method to prove the proposition \ref{dirac: the same definition}.
\begin{lemma}
For any $A\geq 1$, we have
\begin{align*}
\big\|\lim_{r\rightarrow+\infty}r\phi(r+s,r\omega)-\mathcal{F}_{nonlinear}^{+}(\phi_{0})(s,\omega)\big\|_{L^{\infty}([-A,A]\times\mathbb{S}^{2})}=0.
\end{align*}
In particular, we have
\begin{align*}
\lim_{r\rightarrow +\infty}r\phi(r+s,\omega)=\mathcal{F}^{+}_{nonlinear}(\phi_{0})(s,\omega),
\end{align*}
for almost every $(s,\omega)\in\mathbb{R}\times\mathbb{S}^{2}$.
\end{lemma}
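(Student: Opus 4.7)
The plan is to reduce to the case of compactly supported smooth initial data, where the two definitions of the radiation field coincide explicitly via Duhamel, and then pass to the limit using the continuity estimates already established.

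First I would verify that Lemma \ref{dirac: 123} applies to the nonlinear solution $\phi = S(\phi_0)$. The hypothesis $|x||\mathcal{D}\phi| \lesssim (1+t+|x|)^{-3/2}$ follows from the spinor null structure: decomposing
\[
N(\phi,\phi) = N(P(\omega)\phi, P(-\omega)\phi) + N(P(-\omega)\phi, P(\omega)\phi)
\]
and combining the pointwise bound $|\phi|\lesssim \varepsilon_0(1+t+|x|)^{-1}(1+|t-|x||)^{-1/2}$ with the improved estimate $|P(-\omega)\phi|\lesssim \varepsilon_0(1+t+|x|)^{-3/2}$ from Lemma \ref{dirac: decay estiamte} gives $|x||N(\phi,\phi)|\lesssim \varepsilon_0^2(1+t+|x|)^{-3/2}$. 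Hence the limit $\lim_{r\to\infty}r\phi(r+s, r\omega)$ exists pointwise with the quantitative rate $|\lim_{r\to\infty} r\phi(r+s, r\omega) - \tfrac{M-s}{2}\phi(\tfrac{M+s}{2}, \tfrac{M-s}{2}\omega)| \lesssim M^{-1/2}$ for $|s|\leq M$.

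Next I would choose a sequence $\phi_0^{(n)}\in C_0^{\infty}(\mathbb{R}^3,\mathbb{C}^4)$ converging to $\phi_0$ in $\mathcal{X}$ with $\|\phi_0^{(n)}\|_{\mathcal{X}}\leq \varepsilon_0$, and set $\phi^{(n)} = S(\phi_0^{(n)})$. By Lemma \ref{part1}, $\phi^{(n)} \to \phi$ in $\mathcal{W}$. For each $n$, since $\phi_0^{(n)}$ has compact support, finite speed of propagation gives compactly supported $\phi^{(n)}(\tau, \cdot)$ on each time slice, so $N(\phi^{(n)},\phi^{(n)})(\tau,\cdot)\in C_0^{\infty}$. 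The Duhamel formula in Lemma \ref{dirac: bilinear map} then expresses $\mathcal{F}^+_{nonlinear}(\phi_0^{(n)})(s,\omega)$ as a superposition of linear radiation fields of homogeneous solutions, each of which is the pointwise limit by Lemma \ref{dirac: 0-order energy}; interchanging limit and $\tau$-integral is justified by the rapid decay of the compactly supported contributions, yielding $\lim_{r\to\infty}r\phi^{(n)}(r+s, r\omega)=\mathcal{F}^+_{nonlinear}(\phi_0^{(n)})(s,\omega)$ pointwise.

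The final step is to pass $n\to\infty$ on both sides uniformly on $[-A,A]\times\mathbb{S}^2$. On the functional side, continuity of the nonlinear radiation field map gives $\mathcal{F}^+_{nonlinear}(\phi_0^{(n)}) \to \mathcal{F}^+_{nonlinear}(\phi_0)$ in $\mathcal{Y}^+$; since $\hat{\mathcal{A}}$ contains $\partial_s$ and all angular-type vector fields and $\mathcal{Y}^+$ controls six such derivatives, a Sobolev embedding on the cylinder $[-A,A]\times\mathbb{S}^2$ upgrades this to uniform convergence. On the pointwise side, apply the rate estimate of Lemma \ref{dirac: 123} to both $\phi$ and $\phi^{(n)}$ with the same $M\geq A$, then use the Klainerman--Sobolev inequality (Lemma \ref{dirac: KS inequality}) applied to $\phi^{(n)}-\phi$ at $t=(M+s)/2$ to bound the difference $\tfrac{M-s}{2}|\phi^{(n)}-\phi|(\tfrac{M+s}{2},\tfrac{M-s}{2}\omega)$ by $\|\phi_0^{(n)}-\phi_0\|_{\mathcal{X}}$. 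Choosing first $M$ large so the $M^{-1/2}$ errors are small, then $n$ large so the approximation errors are small, yields the desired $L^\infty$ identity, and a.e. equality on $\mathbb{R}\times\mathbb{S}^2$ follows by letting $A\to\infty$.

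The main obstacle I foresee is the Sobolev embedding step: one must verify that the six vector fields from $\hat{\mathcal{A}}$ effectively control pointwise values on the cylinder in a way compatible with the spinor bundle structure, in particular that the zeroth-order correction terms (such as $\omega^i I$ and the $\gamma$-matrix pieces in $\hat{\mathcal{L}}$) do not spoil the embedding, and that the projection constraint $P(-\omega)\psi=0$ defining $E^+$ is preserved under approximation in $\mathcal{Y}^+$.
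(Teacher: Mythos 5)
Your overall strategy---approximate, identify the two definitions of the radiation field for the approximants, then pass to the limit using the rate estimate of Lemma \ref{dirac: 123} together with continuity of the solution and radiation-field maps and a Sobolev embedding on the cylinder---is close in spirit to the paper's, but your choice of approximants creates a genuine gap at the central step. You approximate by the \emph{nonlinear} solutions $\phi^{(n)}=S(\phi_{0}^{(n)})$ with truncated initial data and then assert that $\lim_{r\rightarrow+\infty}r\phi^{(n)}(r+s,r\omega)=\mathcal{F}^{+}_{nonlinear}(\phi_{0}^{(n)})(s,\omega)$ pointwise, justifying the interchange of the $r$-limit with the Duhamel $\tau$-integral by ``rapid decay of the compactly supported contributions''. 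But compact support of $\phi_{0}^{(n)}$ only gives compact \emph{spatial} support of $N(\phi^{(n)},\phi^{(n)})(\tau,\cdot)$ for each fixed $\tau$; the source persists for all $\tau\geq 0$, the Duhamel integral runs over $[0,r+s]$, and the interchange requires a uniform-in-$r$, $L^{1}_{\tau}$ majorant of $r\,[U(r+s-\tau)N(\phi^{(n)},\phi^{(n)})(\tau,\cdot)](r\omega)$. The natural bound via Klainerman--Sobolev applied to the time-translated free evolution involves boost and scaling vector fields centered at time $\tau$, whose restriction to the slice $t=\tau$ costs a factor of order $\tau$ (the data are supported in $\{|x|\lesssim \tau\}$), and with the available decay of $\|\Gamma^{\alpha}N(\phi^{(n)},\phi^{(n)})(\tau,\cdot)\|_{L^{2}}$ (at best of order $(1+\tau)^{-3/2}$ from the null structure) this produces a majorant of order $(1+\tau)^{-1/2}$, which is not integrable. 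So the interchange is not justified as stated; in fact the identity you invoke for compactly supported data is essentially Proposition \ref{dirac: the same definition} itself in a special case, i.e.\ the core difficulty is assumed rather than proven. (By contrast, the point you flag as the main obstacle---the Sobolev embedding on $[-A,A]\times\mathbb{S}^{2}$ and the preservation of the constraint $P(-\omega)\psi=0$---is the unproblematic part.)

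The paper sidesteps exactly this issue by a different truncation: it keeps the fixed nonlinear solution $\phi$ and truncates its \emph{source} in spacetime, setting $\Phi_{n}=\chi(|x|/n)\chi(t/n)N(\phi,\phi)$ and $\phi_{0,n}=\chi(|x|/n)\phi_{0}$, and lets $\phi_{n}$ solve the \emph{linear} inhomogeneous equation $\mathcal{D}\phi_{n}=\Phi_{n}$. Then the Duhamel $\tau$-integral is over the compact interval $[0,2n]$, so identifying $\mathcal{F}^{+}_{Dirac}(\phi_{0,n},\Phi_{n})$ with $\lim_{r\rightarrow+\infty}r\phi_{n}(r+s,r\omega)$ via Lemma \ref{dirac: 123} is immediate, the convergence $\mathcal{F}^{+}_{Dirac}(\phi_{0,n},\Phi_{n})\rightarrow\mathcal{F}^{+}_{nonlinear}(\phi_{0})$ follows from the bilinear bound of Lemma \ref{dirac: bilinear map}, and finite speed of propagation gives the \emph{exact} equality $\phi_{n}=\phi$ at the comparison points $\big(\tfrac{M+s}{2},\tfrac{M-s}{2}\omega\big)$ once $n\geq M$, so no Lipschitz estimate for the nonlinear solution map is needed there. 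If you wish to keep your approximation by nonlinear solutions, you would still need an analogue of this spacetime truncation of the source (or a genuinely summable majorant exploiting the null structure much more carefully) to close the identification step; as written, that step does not go through.
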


\begin{proof}
Let $\chi\in C_{0}^{\infty}$ be a cut-off function such that $\chi\equiv 1$ on $[-1,1]$ and $\chi\equiv0$ on $(-\infty,2]\cup[2,+\infty)$,take
\begin{align*}
\Phi_{n}:=\chi(\frac{|x|}{n})\chi(\frac{t}{n})N(\phi,\phi),\phi_{0,n}:=\chi(\frac{|x|}{n})\phi_{0}.
\end{align*}
Let $\phi_{n}$ be solution of the equation
\begin{equation*}
\begin{cases}
\mathcal{D}\phi_{n}=\Phi_{n},\\
\phi_{n}(0,\cdot)=\phi_{0,n}.
\end{cases}
\end{equation*}
When $n\rightarrow+\infty$, we have
\begin{align*}
\big\|\mathcal{F}^{+}_{Dirac}(\phi_{0,n},\Phi_{n})-\mathcal{F}_{nonlinear}^{+}(\phi_{0})\big\|_{\mathcal{Y}^{+}}\rightarrow 0.
\end{align*}
By Sobolev inequality on $\mathbb{R}\times\mathbb{S}^{2}$,
\begin{align}\label{dirac: 104}
\big\|\mathcal{F}^{+}(\phi_{0,n},\Phi_{n})-\mathcal{F}^{+}_{nonlinear}(\phi_{0})\big\|_{L^{\infty}([-A,A]\times\mathbb{S}^{2})}\rightarrow 0.
\end{align}
In view of lemma \ref{dirac: 123}, for any $M\geq A$, we have
\begin{align*}
\big|\mathcal{F}^{+}_{Dirac}(\phi_{0,n},\Phi_{n})(s,\omega)-\frac{M-s}{2}\phi_{n}(\frac{M+s}{2},\frac{M-s}{2}\omega)\big|\lesssim M^{-\frac{1}{2}},\forall s\in [-A,A],\\
\big|\lim_{r\rightarrow +\infty}r\phi(r+s,\omega)(s,\omega)-\frac{M-s}{2}\phi(\frac{M+s}{2},\frac{M-s}{2}\omega\big|\lesssim M^{-\frac{1}{2}},\forall s\in [-A,A],
\end{align*}
by domain of dependence argument, when $n\geq M$ we can conclude that
\begin{align*}
\phi_{n}(\frac{M+s}{2},\frac{M-s}{2}\omega)=\phi(\frac{M+s}{2},\frac{M-s}{2}\omega), s\in [-A,A], 
\end{align*}
therefore, we have
\begin{align}\label{dirac: 103}
\lim_{n\rightarrow \infty}\big\|\lim_{r\rightarrow+\infty}r\phi(r+s,r\omega)-\mathcal{F}^{+}_{Dirac}(\phi_{0,n},\Phi_{n})(s,\omega)\big\|_{L^{\infty}([-A,A]\times\mathbb{S}^{2})}=0,
\end{align}
in view of Eq. \eqref{dirac: 104} and Eq. \eqref{dirac: 103}, we have
\begin{align*}
\big\|\lim_{r\rightarrow+\infty}r\phi(r+s,r\omega)-\mathcal{R}_{nonlinear}(\phi_{0})(s,\omega)\big\|_{L^{\infty}([-A,A]\times\mathbb{S}^{2})}=0.
\end{align*}
In particular, we have
\begin{align*}
\lim_{r\rightarrow +\infty}r\phi(r+s,\omega)=\mathcal{F}^{+}_{nonlinear}(\phi_{0})(s,\omega),
\end{align*}
for almost every $(s,\omega)\in\mathbb{R}\times\mathbb{S}^{2}$.
\end{proof}

\section*{Acknowledgments}
We sincerely thank the reviewer for his/her patience in pointing out the issues in the manuscript and for providing many constructive suggestions. The second author is funded by Guangzhou Science and Technology Programme (No.202201011125).


\begin{thebibliography}{99}

\bibitem{Alford} F. Alford, The scattering map on Oppenheimer–Snyder space-time. {\it Annales Henri Poincaré.} {\bf 21} (2020) 2031-2092.

\bibitem{MR2666888} S. Alinhac, Geometric analysis of hyperbolic differential equations: an introduction, volume 374 of \textit{London Mathematical Society Lecture Note Series}. Cambridge University Press, Cambridge, 2010.

\bibitem{AngelopoulosAretakisGajic} Y. Angelopoulos, S. Aretakis and D. Gajic, A non-degenerate scattering theory for the wave equation on extremal Reissner–Nordström. {\it Comm. Math. Phys.} {\bf 380} (2020) 323-408.

\bibitem{MR2169870} D. Baskin and A. S\'{a} Barreto, Radiation fields for semilinear wave equations.{\it Trans. Amer. Math. Soc.} {\bf 367} (2015) 3873-3900.

\bibitem{BaskinWang} D. Baskin and F. Wang,  Radiation fields on Schwarzschild spacetime.{\it Comm. Math. Phys.} {\bf 331} (2014) 477-506.

\bibitem{BaskinBoothGell} D. Baskin, R. Booth and J. Gell-Redman, Asymptotics of the radiation field for the massless Dirac–Coulomb system. {\it J. Funct. Anal.} {\bf 285} (2023) Paper No. 110107.

\bibitem{MR1219537} T. Bernd, {\it The Dirac equation.} Texts and Monographs in Physics. (Springer Verlag, 1992).

\bibitem{BejenaruHerr} I. Bejenaru and S. Herr, The Cubic Dirac Equation: Small Initial Data in $H^{1}(\mathbb{R}^{3})$. {\it Comm. Math. Phys.} {\bf 343}(2016) 515-562.

\bibitem{BournaveasCandy} N. Bournaveas and T. Candy,  Global well-posedness for the massless cubic Dirac equation. {\it Int. Math. Res. Not. IMRN}, {\bf 22} (2016) 6735-6828.

\bibitem{CandyKauffmanLinblad} T. Candy, C. Kauffman and H. Lindblad, Asymptotic behavior of the Maxwell Klein–Gordon system. {\it Comm. Math. Phys.} {\bf 367} (2019) 683-716.

\bibitem{ChenLinblad}X. Chen and H. Lindblad, Asymptotics and scattering for wave Klein-Gordon systems. {\it Comm. Partial Differential Equations}, {\bf 48} (2023) 1102-1147.

\bibitem{Christodoulou D} D. Christodoulou, Global solutions of nonlinear hyperbolic equations for small initial data.{\it Comm. Pure Appl. Math} {\bf 39} (1986)  267-282.

\bibitem{MR1316662} D. Christodoulou and S. Klainerman, \textit{The global nonlinear stability of the {M}inkowski space}.  (Princeton University Press, 1993).

\bibitem{DafermosRodnianskiShlapentokh} M. Dafermos, I. Rodnianski and Y. Shlapentokh-Rothman {\it A scattering theory for the wave equation on Kerr black hole exteriors}. arxiv preprint arxiv:1412.8379, 2014.

\bibitem{DAnconaFoschiSelberg} P. D'Ancona, D. Foschi and S. Selberg, Null structure and almost optimal local regularity for the Dirac-Klein-Gordon system. {\it J. Eur. Math. Soc. }{\bf 9} (2007) 877-899.

\bibitem{Dirac P A M} P. Dirac, The quantum theory of the electron. {\it Proceedings of the Royal Society of London}. Series A, Containing Papers of a Mathematical and Physical Character, {\bf 117} (1928) 610-624.

\bibitem{MR344687}	F. G. Friedlander, An inverse problem for radiation fields.{\it Proc. London Math. Soc.} {\bf 3} (1973) 551-576.

\bibitem{MR583989}	F. G. Friedlander, Radiation fields and hyperbolic scattering theory.{\it Math. Proc. Cambridge Philos. Soc.}{\bf 88} (1980) 483-515.

\bibitem{MR1846782} F. G. Friedlander, Notes on the wave equation on asymptotically {E}uclidean manifolds. {\it J. Functional. Anal.}{\bf184} (2001)1-18.

\bibitem{GavrusOh} C. Gavrus and S-J Oh, Global well-posedness of high dimensional Maxwell–Dirac for small critical data. {\it Mem. Amer. Math. Soc.} {\bf 264}(2020) no.1279, v+94pp.

\bibitem{MR1078267} M. G. Grillakis, Regularity and asymptotic behaviour of the wave equation with a critical nonlinearity. {\it Ann. of Math. (2), } {\bf 132} (1990) 485-509.

\bibitem{MR2047861} D. H\"{a}fner and J-P. Nicolas, Scattering of massless {D}irac fields by a {K}err black hole. {\it Rev. Math. Phys.} {\bf 16} (2004) 29-123.

\bibitem{He L} L. He, Scattering from infinity of the Maxwell Klein Gordon equations in Lorenz gauge. {\it Comm. Math. Phys.} {\bf 386} (2021) 1747-1801.

\bibitem{heisenberg1984quantum} W. Heisenberg, \textit{Quantum Electrodynamics in the Nonlinear Spinor Theory}. Scientific Review Papers, Talks, and Books Wissenschaftliche {\"U}bersichtsartikel, Vortr{\"a}ge und B{\"u}cher. pages 658-676. (Springer 1984).

\bibitem{JiaYu} J. Jia and P. Yu, {\it on the nonlinear stability of Minkowski spacetime: a rigidity theorem}. arxiv preprint arxiv:2304.03449, 2023.

\bibitem{MR3362023} S. Katayama and H. Kubo, Global existence for quadratically perturbed massless {D}irac equations under the null condition. In \textit{Fourier analysis}, Trends Math, pages 253-262. Birkh\"{a}user/Springer, Cham.

\bibitem{KesavanSrinivasan} S. Kesavan,  \textit{Nonlinear functional analysis: a first course}. (New Delhi: Hindustan Book Agency, 2004).

\bibitem{MR865359} S. Klainerman, Remarks on the global {S}obolev inequalities in the {M}inkowski space {${\bf R}^{n+1}$}. {\it Comm. Pure Appl. Math.} {\bf 40} (1987) 111-117.

\bibitem{MR837683} S. Klainerman, {\it The null condition and global existence to nonlinear wave equations}, Part 1 (Santa Fe, N.M. 1984), volume 23 of Lectures in Appl. Math. pages 293-326. Amer. Math. Soc. Providence, RI, 1986.

\bibitem{MR0217440} P. D. Lax and R. S. Phillips, \textit{Scattering theory}. Pure and Applied Mathematics, Vol.26. (Academic Press, New York-London, 1967).

\bibitem{Li M} M. Li, Inverse scattering of Alfvén waves in three dimensional ideal magnetohydrodynamics. {\it Adv. Math. }{\bf 435} (2023) Paper No. 109363.

\bibitem{MR4223342} M. Li and P. Yu, On the rigidity from infinity for nonlinear {A}lfv\'{e}n waves. {\it J. Differential Equations}, {\bf 283} (2021) 163-215.

\bibitem{MR4184660} J. Li and Y.  Zang,  Asymptotic properties of the spinor field and the application to nonlinear {D}irac models.{\it Int. Math. Res. Not. IMRN}, {\bf 13} (2021) 9609-9647.

\bibitem{2017arXiv171100822L} H. Lindblad and V. Schlue, {\it Scattering from infinity for semilinear wave equations satisfying the null condition or the weak null condition}. arXiv e-prints, page arXiv: 1711.00822, November 2017.

\bibitem{LindbladSchlue} H. Lindblad and V. Schlue, {\it Scattering for wave equations with sources close to the lightcone and prescribed radiation fields}. arxiv preprint arxiv:2303.10569, 2023.

\bibitem{Masaood} H. Masaood, A scattering theory for linearised gravity on the exterior of the Schwarzschild black hole I: the Teukolsky equations. {\it Comm. Math. Phys.} {\bf 393} (2022) 477-581.

\bibitem{MR1350074} R. B. Melrose, {\it Geometric scattering theory}. Stanford Lectures. (Cambridge University Press, 1995).

\bibitem{MR1317184} J-P. Nicolas, Conformal scattering on the {S}chwarzschild metric. {\it Ann. Inst. Fourier (Grenoble)} {\bf 66} (2016) 1175-1216.

\bibitem{Reed} M. Reed and B. Simon {\it Methods of modern mathematical physics}. (New York: Academic press, 1972).

\bibitem{Soler M} M. Soler, Classical, stable, nonlinear spinor field with positive rest energy. {\it Phys. Rev. D} {\bf 1} (1970) 2766.

\bibitem{Rutherford1911TheSO} E. Rutherford, The scattering of alpha and beta particles by matter and the structure of the atom. {\it Philosophical Magazine Series} {\bf 21}(1991) 669-688.

\bibitem{Thirring Walter E} W. Thirring, A soluble relativistic field theory. {\it Annals of Physics} {\bf 3} (1958)  91-112.

\bibitem{MR1637692}N. Tzvetkov, Existence of global solutions to nonlinear massless {D}irac system and wave equation with small data. {\it Tsukuba J. Math}. {\bf 22}(1998) 193-211.

\bibitem{2013arXiv1304.0407W} F. Wang, \textit{Radiation field for Einstein vacuum equations with spacial dimension $n\geq 4$}. arXiv e-prints, page arXiv: 1304.0407, April 2013.

\bibitem{Yu D1} D. Yu, Modified wave operators for a scalar quasilinear wave equation satisfying the weak null condition. {\it Comm. Math. Phys.}{\bf 382} (2021) 1961-2013.

\bibitem{Yu D2} D. Yu, {\it Nontrivial global solutions to some quasilinear wave equations in three space dimensions}. arxiv preprint arxiv:2204.12870, 2022.

\bibitem{Zha D} D. Zha, Scattering and rigidity for nonlinear elastic waves. {\it Calc. Var. Partial Differential Equations} {\bf 63} (2024) Paper No. 115.


\end{thebibliography}

\end{document}